\newtheorem{thm}{Theorem}[section]
\newcommand{\bt}{\begin{thm}}
\newcommand{\et}{\end{thm}}
\newtheorem{cor}[thm]{Corollary}   %remember switch all {coro} to {cor}
\newcommand{\bc}{\begin{cor}}
\newcommand{\ec}{\end{cor}}
\newtheorem{lem}[thm]{Lemma}   %remember to switch all {lemma} to {lem}
\newcommand{\bl}{\begin{lem}}
\newcommand{\el}{\end{lem}}
\newtheorem{prop}[thm]{Proposition}
\newcommand{\bp}{\begin{prop}}
\newcommand{\ep}{\end{prop}}
\newtheorem{defn}[thm]{Definition}
\newtheorem{conj}[thm]{Conjecture}
\newcommand{\bd}{\begin{defn}}    % This produces an error????
\newcommand{\ed}{\end{defn}}
\newtheorem{rmrk}[thm]{Remark}   %remember to switch all {rmrk} to {rmrk}
\newcommand{\br}{\begin{rmrk}}
\newcommand{\er}{\end{rmrk}}
\newtheorem{example}[thm]{Example}
\newcommand{\be}{\begin{equation}}
\newcommand{\ee}{\end{equation}}
\newcommand{\C}{\mathbb{C}}
\newcommand{\R}{\mathbb{R}}
\newcommand{\E}{\mathbb{E}}
\newcommand{\Z}{\mathbb{Z}}
\newcommand{\diam}{\operatorname{diam}}
\newcommand{\Ricci}{\rm{Ricci}}
\newcommand{\Lip}{\operatorname{Lip}}
\newcommand{\vol}{\operatorname{Vol}}
\begin{document}

\title[Diameter Controls and Smooth Convergence away from Singular Sets]{Diameter Controls and Smooth Convergence away from Singular Sets}

\author{Sajjad Lakzian}
\thanks{Lakzian was partially supported
by NSF DMS \#1006059.}
\address{CUNY Graduate Center}
\email{SLakzian@gc.cuny.edu}

\date{}

\keywords{}

\begin{abstract}

We prove that if a family of metrics, $g_i$, on a compact Riemannian
manifold, $M^n$, have a uniform lower Ricci curvature bound and
converge to $g_\infty$ smoothly away from a singular set, $S$, with
Hausdorff measure, $H^{n-1}(S) = 0$, and if there exists connected
precompact exhaustion, $W_j$, of $M^n \setminus S$ satisfying
$\diam_{g_i}(M^n) \le D_0 $, $\vol_{g_i}(\partial W_j) \le A_0 $ and
$\vol_{g_i}(M^n \setminus W_j) \le V_j where \lim_{j\to\infty}V_j=0 $
then the Gromov-Hausdorff limit exists and agrees with the metric
completion of $(M^n \setminus S, g_\infty)$.    This is a strong
improvement over prior work of the author with
Sormani that had the additional assumption that the singular set had
to be a smooth submanifold of codimension two.  We have a second main
theorem in which the Hausdorff measure condition on $S$ is replaced by
diameter estimates on the connected components of the boundary of the
exhaustion, $\partial W_j$. This second theorem allows for singular
sets which are open subregions of the manifold.  In addition, we show
that the uniform lower Ricci curvature bounds in these theorems can be
replaced by the existence of a uniform linear contractibility
function. If this condition is removed altogether, then we prove that
$\lim_{j\to \infty} d_{\mathcal{F}}(M_j', N')=0$, in which $M_j'$ and
$N'$ are the settled completions of $(M, g_j)$ and $(M_\infty\setminus
S, g_\infty)$ respectively and $d_{\mathcal{F}}$ is the Sormani-Wenger
Intrinsic Flat distance. We present examples demonstrating the
necessity of many of the hypotheses in our theorems. Finally, as an application, we will prove the Candella-de la Ossa's conjecture for Calabi-Yau conifolds. 

\end{abstract}

\maketitle

%==================================================================%
%======================= SECTION: INTRODUCTION===================%
%==================================================================%

\section{Introduction}

In this paper, we will prove results concerning the smooth convergence of Riemannian metrics away from a singular set $S$ and will provide some important application of our results. One definition of smooth convergence away from singularities is as follows:
 
\begin{defn} \label{defn-smoothly-1}
We will say that a sequence of Riemannian metrics $g_i$ on a
compact manifold $M^n$ converges smoothly away from $S \subset M^n$
to a Riemannian metric $g_\infty$ on $M^n \setminus S$ if
for every compact set $K\subset M^n\setminus S$, $g_i$
converge $C^{k,\alpha}$ smoothly to $g_\infty$ as tensors.
\end{defn}

Right away from the definition, it is apparent that the global geometry is
not well controlled under such convergence.   It is natural to ask under
what additional conditions the original sequence of manifolds, 
$M_i=(M^n, g_i)$ have the expected
Gromov-Hausdorff (GH) and Sormani-Wenger Intrinsic Flat (SWIF) limits 
\cite{Gromov-metric} \cite{SorWen2}.   Recall that there are
examples of sequences of metrics on spheres which converge
smoothly away from a point singularity which have no subsequence
converging in the GH or the SWIF sense, so additional
conditions are necessary (c.f. \cite{Lakzian-Sormani}). 

Many results concerning GH limits of the $M_i$ have appeared in the literature. For example, Anderson in \cite{Anderson-KE} studies the convergence of Einstein metrics to orbifolds.  Bando-Kasue-Nakajima in \cite{BKN} studies the singularities of the Einstein ALF manifolds.  Eyssidieux-Guedj-Zeriahi in \cite{EGZ} prove similar results for the solutions to the complex Monge-Ampere equation. Also Huang in \cite{Huang-convergence} , Ruan-Zhong in \cite{Ruan-Zhang}  , Sesum in \cite{Sesum-convergence}, Tian in \cite{Tian-surfaces} and Tosatti in \cite{Tosatti-KE} study the convergence of Kahler-Einstein metrics and Kahler-Einstein orbifolds. However, even in this setting, the relationship is not completely clear and the limits need not agree (see \cite{Bando-bubbling}.) In Tian-Viaclovsky \cite{Tian-Viac}, compactness results for various classes Riemannian metrics in dimension four were obtained in particular for anti-self-dual metrics, Kahler metrics with constant scalar curvature, and metrics with harmonic curvature. Also the relation between different notions of convergence for Ricci flow is studied in ~\cite{RF_SWIF}.

Here we first study SWIF limits of sequences of manifolds which
converge away from
a singular set and then prove the SWIF and GH limits agree using
techniques developed in
prior work of the author with Sormani in [LS].   All necessary
background on these
techniques and on SWIF convergence is reviewed in Section 2.

\begin{thm}\label{codim-thm}
Let $M_i=(M^n , g_i)$ be a sequence of compact oriented Riemannian manifolds
such that
there is a subset, $S$, with $H^{n-1}(S) = 0$
and connected precompact exhaustion,
$W_j$, of $M\setminus S$ satisfying (\ref{defn-precompact-exhaustion})
with $g_i$ converge smoothly to $g_\infty$ on each $W_j$,
\be\label{m-diam}
\diam_{M_i}(W_j) \le D_0 \qquad \forall i\ge j, 
\ee
\be \label{m-area}
\vol_{g_i}(\partial W_j) \le A_0,
\ee
and
\be \label{m-edge-volume}
\vol_{g_i}(M\setminus W_j) \le V_j \textrm{ where } \lim_{j\to\infty}V_j=0.
\ee
Then
\be
\lim_{i\to \infty} d_{\mathcal{F}}(M_i', N')=0.
\ee
where  $M_i'$ and
$N'$ are the settled completion
of $(M, g_i)$ and $(M\setminus S, g_\infty)$ respectively.
\end{thm}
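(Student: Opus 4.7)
The plan is to bound $d_{\mathcal{F}}(M_i', N')$ by constructing, for each pair $(i,j)$ with $i$ large, an ambient metric space $Z_{i,j}$ together with distance-preserving embeddings of $M_i'$ and $N'$, and then exhibiting an integral $(n+1)$-current $V$ in $Z_{i,j}$ whose boundary realizes (the image of) $M_i' - N'$ modulo a small error current $U$, with $\mass(U) + \mass(V)$ tending to zero. This is a refinement of the construction in the author's prior work with Sormani, which handled the codimension-two case; the key change is that the exhaustion $W_j$ now plays the role previously played by a tubular neighborhood of the singular stratum.

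First I would decompose each side along the exhaustion as $M_i' = T_i^j + E_i^j$ and $N' = T_\infty^j + E_\infty^j$, where $T_i^j, T_\infty^j$ are the integral currents carried by $W_j$ with metrics $g_i, g_\infty$ and $E_i^j, E_\infty^j$ are the corresponding ``edge'' pieces on $M \setminus W_j$ and $(M \setminus W_j) \setminus S$. By hypothesis $\mass(E_i^j) \le V_j \to 0$. The assumption $H^{n-1}(S) = 0$ implies $H^n(S) = 0$, and smooth convergence on each $W_k$ shows $\vol_{g_\infty}(M \setminus S) = \lim_k \vol_{g_\infty}(W_k)$ is finite; since the $W_j$ exhaust $M \setminus S$, this forces $\mass(E_\infty^j) \to 0$ as $j \to \infty$ as well.

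Next I would compare $T_i^j$ and $T_\infty^j$ using the smooth convergence. For fixed $j$, set $\delta_i := \|g_i - g_\infty\|_{C^0(W_j)} \to 0$ as $i \to \infty$. Build $Z_{i,j}$ by gluing $(M,g_i)$ and $(M \setminus S, g_\infty)$ along a thin cylinder $W_j \times [0, h_i]$ carrying the interpolated metric $(1 - t/h_i)\, g_i + (t/h_i)\, g_\infty + \dd t^2$, identifying $W_j \times \{0\}$ with $W_j \subset M_i$ and $W_j \times \{h_i\}$ with $W_j \subset N$. Choosing $h_i \gtrsim \delta_i D_0$ guarantees that the two inclusions are distance-preserving, since any detour through the cylinder costs at least $2 h_i$ of ambient distance while saving at most $\delta_i D_0$ via the other metric (this is where $\diam_{M_i}(W_j) \le D_0$ enters). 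The cylinder, taken with appropriate orientation, serves as a filling $V$ with $\mass(V) \le h_i \bigl(\vol_{g_i}(W_j) + O(\delta_i)\bigr)$ and boundary $T_i^j - T_\infty^j - L$, where $L \subset \partial W_j \times [0, h_i]$ is a lateral piece of mass at most $h_i A_0$. Setting $U = (E_i^j - E_\infty^j) + L$ we get $M_i' - N' = U + \partial V$, hence
\begin{equation*}
d_{\mathcal{F}}(M_i', N') \le \mass(U) + \mass(V) \le V_j + \mass(E_\infty^j) + h_i A_0 + h_i \vol_{g_i}(W_j).
\end{equation*}

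Letting $i \to \infty$ with $j$ fixed, the last two terms vanish since $h_i \to 0$ and $\vol_{g_i}(W_j)$ is uniformly bounded by smooth convergence; this gives $\limsup_i d_{\mathcal{F}}(M_i', N') \le V_j + \mass(E_\infty^j)$, and letting $j \to \infty$ completes the proof. The main obstacle is Step 2: verifying that the cylinder construction actually produces a valid ambient metric space with distance-preserving embeddings of both $M_i'$ and $N'$. This is precisely where the diameter bound $\diam_{M_i}(W_j) \le D_0$ and the area bound $\vol_{g_i}(\partial W_j) \le A_0$ enter, the former controlling the required cylinder height $h_i$ and the latter controlling the mass of the lateral boundary $L$; together they ensure that both sources of error scale like $h_i \to 0$ rather than accumulating.
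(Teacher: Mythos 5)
There is a genuine gap, and it sits exactly where you flag ``the main obstacle'': Step 2. Your claim is that choosing $h_i \gtrsim \delta_i D_0$, with $\delta_i = \|g_i - g_\infty\|_{C^0(W_j)}$, makes both inclusions into the glued space distance-preserving, because a detour through the cylinder ``saves at most $\delta_i D_0$ via the other metric.'' That bound is false: a competing path between $x,y \in W_j$ may cross the cylinder and then travel in the \emph{other} manifold through the region outside $W_j$ — in particular arbitrarily close to $S$ — where $g_i$ and $g_\infty$ are not comparable at all (the convergence is only on compact subsets of $M\setminus S$). The savings of such a detour are governed by the distortion of the ambient distance functions, $\sup_{x,y\in W_j}\,|d_{(M,g_i)}(x,y)-d_{(M\setminus S,\,g_\infty)}(x,y)|$, which is the quantity $\lambda$ of Theorem~\ref{thm-subdiffeo}, not by $\delta_i D_0$. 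Note that your argument uses $H^{n-1}(S)=0$ only to control edge volumes (for which $H^n(S)=0$ would do), so if Step 2 were correct the same construction would apply verbatim to Example~\ref{ex-slit-torus}: the constant flat metric on the torus with a slit $S$ of positive $H^1$-measure satisfies all of your working hypotheses with $\delta_i=0$, yet the intrinsic flat limit is the torus itself and not the settled completion of the slit torus. Concretely, two points on opposite sides of the slit are at definite distance in $N'$ but would be at distance $\le 2/j + 2h_i$ in your glued space, so the embedding of $N'$ is not distance-preserving no matter how $h_i$ is chosen; your proposed filling would then ``prove'' a false conclusion.

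What is missing is precisely the content of the paper's proof: one must first show that distances measured inside the exhaustion sets are close to distances measured in the whole manifold, i.e.\ the uniform well-embeddedness of Definition~\ref{well-embedded}. The paper obtains this from the hypothesis $H^{n-1}(S)=0$ by perturbing any minimizing $g_i$-geodesic between points of $\bar W_j$ off of $S$ with arbitrarily small length increase (Lemma~\ref{codim-lem}, an $(n-1)$-parameter variation combined with a bi-Lipschitz/Hausdorff-measure projection argument), deducing the required iterated limits of $\lambda_{i,j,k}$ in Lemma~\ref{codim-lambda-lem}, and only then invoking Theorem~\ref{flat-to-settled}, whose gluing construction (Theorem~\ref{thm-subdiffeo}) chooses the hemispherical width $a$ and height $\bar h$ from $\lambda$ and $\epsilon$ rather than from the $C^0$ difference of the metrics on $W_j$. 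Your cylinder construction could in principle be salvaged along those lines, but only after proving such a distortion estimate; as written, the distance-preservation claim in Step 2 — and hence the flat-distance bound built on it — does not hold, and the role of $H^{n-1}(S)=0$ in the theorem is never actually used where it is needed.
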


Here $\diam_{M}(W)$ is the extrinsic diameter found by
\be \label{ext-diam}
\diam_{M}(W)=\sup \{d_M(x,y): \, x,y \in W\}
\ee
where $d_M$ is the extrinsic distance measured in $M$ rather than $W$:
\be \label{ext-dist}
d_M(x,y)=\inf\{L(C): \, C:[0,1]\to M, \,\, C(0)=x,\,\, C(1)=y\}.
\ee
We write $M_i=(M, g_i)$.  The intrinsic diameter of $W$
is then $\diam_W(W)$.
See Remark \ref{codim-necessity} for the necessity of the hypotheses in Theorem \ref{codim-thm}.  

Under the conditions of Theorem~\ref{codim-thm}, if we assume in addition that
the manifolds in the
sequence have a uniform lower bound on Ricci curvature, then the SWIF
and GH limits
agree, so we obtain the following new theorem relating the GH limit to
the metric completion of
the smooth limit away from the singularity:

\begin{thm}\label{Ricci-thm-improved}
Let $M_i=(M,g_i)$ be a sequence of oriented compact Riemannian manifolds
with uniform lower Ricci curvature bounds, 
\be
\Ricci_{g_i}(V,V)\ge (n-1)H \, g_i(V,V) \qquad \forall V \in TM_i  \;,
\ee
which converges smoothly away from
a singular set, $S$, with $H^{n-1}(S) = 0$. 
If there is a connected precompact exhaustion, $W_j$, of
$M\setminus S$,
\be \label{defn-precompact-exhaustion}
\bar{W}_j \subset W_{j+1} \textrm{ with } 
\bigcup_{j=1}^\infty W_j=M\setminus S,
\ee
satisfying 
\be \label{diam-2}
\diam(M_i) \le D_0,
\ee
\be \label{area-2}
\vol_{g_i}(\partial W_j) \le A_0,
\ee
and
\be \label{not-vol-2}
\vol_{g_i}(M\setminus W_j) \le V_j \textrm{ where } \lim_{j\to\infty}V_j=0,
\ee
then
\be
\lim_{j\to \infty} d_{GH}(M_j, N)=0,
\ee
where $N$ is the
metric completion of $(M\setminus S, g_\infty)$.
\end{thm}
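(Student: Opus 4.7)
The plan is to combine the intrinsic flat convergence produced by Theorem~\ref{codim-thm} with Gromov's precompactness theorem for manifolds with a lower Ricci bound, and then invoke the Sormani--Wenger comparison between the two notions of convergence to identify the limits. The hypotheses of Theorem~\ref{Ricci-thm-improved} subsume those of Theorem~\ref{codim-thm}, so the latter already yields $d_{\mathcal{F}}(M_i', N')\to 0$, where $M_i'$ and $N'$ are the settled completions of $(M,g_i)$ and $(M\setminus S,g_\infty)$. This furnishes the candidate limit and guarantees that no mass escapes in the flat limit.

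Second, I would apply Gromov's precompactness theorem: the uniform lower Ricci bound together with $\diam(M_i)\le D_0$ implies that every subsequence of $(M_i)$ admits a further subsequence converging in the Gromov--Hausdorff sense to some compact metric space $Y$. By the standard uniqueness-of-limits argument, it suffices to identify each such subsequential limit $Y$ with the metric completion $N$ of $(M\setminus S, g_\infty)$.

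Third, and this is the crux, I would invoke the Sormani--Wenger theorem that, under a uniform lower Ricci bound and a bounded diameter hypothesis, whenever both the Gromov--Hausdorff and intrinsic flat limits of a sequence of oriented Riemannian manifolds exist and the intrinsic flat limit is a non-zero integral current space, the two limits agree as metric spaces. Non-degeneracy of $N'$ follows here from the smooth convergence on each $W_j$ combined with $\vol_{g_i}(M\setminus W_j)\le V_j\to 0$: the mass of $N'$ is bounded below by $\vol_{g_\infty}(W_j)$, which is positive for all sufficiently large $j$. Hence any subsequential Gromov--Hausdorff limit $Y$ is isometric to $N'$.

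The main obstacle is the final identification of $N'$ with the metric completion $N$ of $(M\setminus S,g_\infty)$. The settled completion only discards points of zero $n$-dimensional density, so the issue is to show that the combination of $H^{n-1}(S)=0$, the area bound (\ref{area-2}), and the vanishing volume bound (\ref{not-vol-2}) prevents cusps or other density-zero strata from forming over $S$ in the limit; intuitively, a small Hausdorff $(n-1)$-measure of $S$ combined with the controlled boundary area forces any Cauchy sequence approaching $S$ to accumulate with full $n$-density. Granting this identification, $N' = N$ as metric spaces, uniqueness of the limit upgrades subsequential convergence to convergence of the full sequence, and the theorem follows.
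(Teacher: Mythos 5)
Your first step is exactly the paper's: the hypotheses give Theorem~\ref{codim-thm}, hence $d_{\mathcal F}(M_i',N')\to 0$. The difficulty is in your third step, which is where the argument has a genuine gap. The result you invoke --- that under a uniform lower Ricci bound and a diameter bound, existence of both GH and SWIF limits with a non-zero intrinsic flat limit forces the two limits to agree --- is not a theorem of Sormani--Wenger in the form you state it. The theorem actually available (Theorem~\ref{thm-sw-Ricci}, i.e.\ Theorem 4.16 of \cite{SorWen2}) requires \emph{nonnegative} Ricci curvature together with a uniform lower volume bound, and the paper explicitly records that the extension to general uniform lower Ricci bounds $\Ricci \ge (n-1)H$ with $H<0$ is only a \emph{conjecture}. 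Since Theorem~\ref{Ricci-thm-improved} allows $H<0$, your crux step rests on an unavailable result; the lower mass bound you extract from $\vol_{g_\infty}(W_j)>0$ does not repair this, because the missing ingredient is the curvature hypothesis, not non-degeneracy of the limit.

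The paper avoids this by citing Proposition~\ref{prop-improve-sw} (from the prior Lakzian--Sormani work), which is tailored to the present situation: it assumes a uniform lower Ricci bound, a uniform volume bound (supplied here by Lemma~\ref{lem-vol-vol}), smooth convergence away from $S$, and SWIF convergence to the settled completion $N'$, and concludes both that $d_{GH}(\bar M_j,\bar N)\to 0$ and that $\bar N = N'$. Note that this proposition also delivers precisely the identification of the settled completion with the metric completion that you flag as ``the main obstacle'' and then assume (``granting this identification''); in your write-up that identification is left unproved, and your heuristic about $H^{n-1}(S)=0$ and the area bound preventing density-zero strata is not an argument. So the proposal is not a complete proof as it stands: either you must supply a proof of the GH/SWIF comparison under a merely bounded-below Ricci curvature in this smooth-convergence-away-from-$S$ setting (which is exactly the content of Proposition~\ref{prop-improve-sw}), or restrict to $\Ricci\ge 0$ where Theorem~\ref{thm-sw-Ricci} applies.
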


See Remark \ref{codim-necessity} for the necessity of our hypotheses in Theorem \ref{Ricci-thm-improved}. We may replace the Ricci condition by a condition on contractibility (see Theorem~\ref{c-codim-thm} .) For the necessity of the hypotheses in this theorem see \cite[Remark 6.8]{Lakzian-Sormani}.

Theorems~\ref{codim-thm}-~\ref{Ricci-thm-improved} improve upon a prior result of
the author and Sormani in [LS] because we no longer require the
singular set to be a smooth
submanifold of codimension 2 as was required there.    In fact, we may
even allow the singular
set to be an open domain as long as we have sufficiently strong
controls on the diameters of the
exhaustion's boundaries as seen in the following theorem:

\begin{thm}\label{diam-thm}
Let $M_i=(M,g_i)$ be a sequence of Riemannian manifolds
such that
there is a closed subset, $S$, and a connected precompact exhaustion,
$W_j$, of $M\setminus S$ satisfying (\ref{defn-precompact-exhaustion})
such that $g_i$ converge smoothly to $g_\infty$ on each $W_j$.

If each connected component of $M\setminus W_j$ has a connected boundary,
\be \label{new-bridge-2}
\limsup_{i \to \infty} \;  \left\{ \sum_\beta \diam_{(\Omega^\beta_j , g_i)} (\Omega^\beta_j) :  \text{$\Omega^\beta_j$ connected component of $\partial W_j$} \right\} \;
\le B_j,
\ee
where $\lim_{j \to \infty} B_j = 0$, and if we have
\be \label{m-int-diam}
\diam_{(W_j, g_i)}(W_j)\le D_{int}, 
\ee
%\be %\label{m-vol}
%\vol(M_i) \le V_0,
%\ee
\be %\label{m-area}
\vol_{g_i}(\partial W_j) \le A_0,
\ee
\be \label{m-edge-volume}
\vol_{g_i}(M\setminus W_j) \le V_j \textrm{ where } \lim_{j\to\infty}V_j=0,
\ee
then
\be
\lim_{j\to \infty} d_{\mathcal{F}}(M_j', N')=0.
\ee
where $N'$ is the settled completion
of $(M\setminus S, g_\infty)$.
\end{thm}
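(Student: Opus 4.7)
The plan is to adapt the hemispherical embedding construction from the prior work of the author with Sormani, used in the proof of Theorem~\ref{codim-thm}, replacing the Hausdorff-measure hypothesis $H^{n-1}(S) = 0$ by the explicit diameter control \eqref{new-bridge-2} on the boundary components. As a first step, for each fixed $j$, the smooth convergence $g_i \to g_\infty$ on $\bar{W}_j$ produces, for $i$ sufficiently large, a biLipschitz identification between $(W_j, g_i)$ and $(W_j, g_\infty)$ with biLipschitz constant $1 + \epsilon_{i,j} \to 1$. This reduces the task to estimating the intrinsic flat defect arising from the ``edge'' regions $M \setminus W_j$ and from the interpolation between the two metrics.

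For each pair $(i,j)$, I would then construct an integral current space $Z_{i,j}$ realizing the estimate $d_{\mathcal F}(M_i', N') \le \mass(Z_{i,j}) + \mass(\partial Z_{i,j} - (M_i' - N'))$. The space $Z_{i,j}$ is assembled from three pieces: (a) a thin interpolation layer $\bar{W}_j \times [0, h_{i,j}]$ equipped with a warped metric matching $g_i$ at $t=0$ and $g_\infty$ at $t=h_{i,j}$, with $h_{i,j} \to 0$; (b) the complement $(M \setminus W_j, g_i)$ attached at $t=0$ so that the bottom boundary is exactly $M_i$; (c) for each connected component $\Omega^\beta_j$ of $\partial W_j$, a truncated cone of height comparable to $\diam_{(\Omega^\beta_j, g_i)}(\Omega^\beta_j)$ glued at $t = h_{i,j}$ to fill in the missing region on the $g_\infty$ side. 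The assumption that each connected component of $M \setminus W_j$ has connected boundary ensures that exactly one cone per component closes the filling correctly.

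The required mass estimates then split into three pieces: the interpolation layer contributes at most $h_{i,j}\,(1 + O(\epsilon_{i,j}))\,\vol_{g_\infty}(W_j)$, which is made arbitrarily small by sending $h_{i,j} \to 0$; the cones contribute at most $\tfrac{1}{n}\,\max_\beta \diam_{(\Omega^\beta_j, g_i)}(\Omega^\beta_j) \cdot \sum_\beta \vol_{g_i}(\Omega^\beta_j) \le \tfrac{1}{n}\,B_j\,A_0$, which tends to zero as $j \to \infty$; and the excess boundary mass on the $M_i$ side is bounded by $\vol_{g_i}(M \setminus W_j) \le V_j$. Sending $i \to \infty$ first and then $j \to \infty$ along a suitable diagonal subsequence $i = i(j) \to \infty$ yields the claimed intrinsic flat convergence.

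The main obstacle I expect is ensuring that $Z_{i,j}$ is a bona fide integral current space of bounded diameter and that its boundary matches the formal difference $M_i' - N'$ up to a small-mass error. The intrinsic diameter bound $D_{int}$ on $(W_j, g_i)$ is needed to keep the overall diameter of the interpolation layer and hence of $Z_{i,j}$ under control, so that the SWIF framework applies; the connected-boundary hypothesis is what permits a one-cone-per-component filling; and the smallness of $B_j$ both bounds the diameter of each cone (keeping the total diameter of $Z_{i,j}$ in check) and drives the cone mass estimate to zero. In contrast to Theorem~\ref{codim-thm}, where $H^{n-1}(S) = 0$ automatically produces small-mass fillings via thin tubular neighborhoods of the singular set, here the cone fillings must be engineered by hand from the geometric data in \eqref{new-bridge-2}, which is the genuinely new technical ingredient.
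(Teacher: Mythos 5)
Your construction has a genuine gap at the very point where the hypotheses \eqref{new-bridge-2} and \eqref{m-int-diam} do their real work. The intrinsic flat distance is estimated through a common space $Z$ into which \emph{both} spaces embed by distance-preserving maps; so before any mass estimate is relevant you must verify that $M_i'$ and $N'$ isometrically embed (in the restricted-metric sense) into your $Z_{i,j}$. Your thin interpolation layer $\bar W_j\times[0,h_{i,j}]$ with $h_{i,j}\to 0$ ``arbitrary'' cannot do this: two points of $W_j$ may be much closer in $M_i$ than in $N'$, because $g_i$-minimizing geodesics can shortcut through $M\setminus W_j$ near $S$, while in $N'$ all paths must stay in $M\setminus S$. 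In the filling theorem you are adapting (Theorem~\ref{thm-subdiffeo}) this is exactly why the height is forced to be $h=\sqrt{\lambda(\max\{D_{U_1},D_{U_2}\}+\lambda/4)}$ with $\lambda$ the sup of the distance distortion on the common region, as in (\ref{lambda}); you cannot send the height to zero independently of $\lambda$. Nothing in your argument bounds this distortion, and the mass estimates you list (cone mass $\lesssim B_jA_0$, excess volume $\le V_j$, thin layer) do not substitute for it. Note also that on the $g_\infty$ side the current of $N'$ is integration over all of $M\setminus S$, not just $W_j$, so the excess $\vol_{g_\infty}((M\setminus S)\setminus W_j)$ must appear as well, though that is a minor bookkeeping point.

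The missing ingredient is precisely the paper's Lemma~\ref{lem-diameter-control-1}: the connectedness of each boundary component of $M\setminus W_j$ together with \eqref{new-bridge-2} lets one reroute any $g_i$-minimizing geodesic in $M$ between points of $W_j$ along the boundary components it crosses, increasing its length by at most $\sum_\beta \diam_{(\Omega^\beta_j,g_i)}(\Omega^\beta_j)\le B_j$; combined with \eqref{m-int-diam} this shows $\lambda_{i,j,k}$ of Definition~\ref{well-embedded} satisfies $\lambda_j\le B_j\to 0$, i.e.\ the exhaustion is uniformly well embedded, and then Theorem~\ref{flat-to-settled} (whose proof runs through Theorem~\ref{thm-subdiffeo}) gives the conclusion. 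So the primary role of $B_j\to 0$ is this distortion control, not the cone-mass bound you assign to it; the slit-torus Example~\ref{ex-slit-torus} shows that without control of the distortion the limit is simply not the settled completion of $(M\setminus S,g_\infty)$, no matter how small the volumes involved are. If you insert a distortion estimate of the rerouting type into your construction and let the layer height be dictated by it, your filling becomes essentially the paper's argument; as written, the proof does not go through.
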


See Remark \ref{necessity-diam} for the necessity of our hypotheses in Theorem \ref{diam-thm}.

In presence of a uniform lower Ricci curvature bound, Theorem \ref{diam-thm} can be applied to prove the following theorem:

\begin{thm}\label{Ricci-diam-thm}
Let $M_i=(M,g_i)$ be a sequence of oriented Riemannian manifolds
with uniform lower Ricci curvature bounds, 
\be
\Ricci_{g_i}(V,V)\ge (n-1)H \, g_i(V,V) \qquad \forall V \in TM_i \; ,
\ee
which converges smoothly away from
a closed singular set, $S$.

If there is a connected precompact exhaustion, $W_j$, of
$M\setminus S$, satisfying (\ref{defn-precompact-exhaustion})
such that 
each connected component of $M\setminus W_j$ has a connected boundary,
\be \label{new-bridge-2}
\limsup_{i \to \infty} \;  \left\{ \sum_\beta \diam_{(\Omega^\beta_j , g_i)} (\Omega^\beta_j) :  \text{$\Omega^\beta_j$ connected component of $\partial W_j$} \right\} \;
\le B_j,
\ee
where $\lim_{j \to \infty} B_j = 0$, and if we have
\be \label{diam-M}
	\diam(M_i)\le D_0, 
\ee
\be \label{m-int-diam}
\diam_{(W_j, g_i)}(W_j)\le D_{int}, 
\ee
and volume controls:
\be \label{area-2}
\vol_{g_i}(\partial W_j) \le A_0,
\ee
\be \label{not-vol-2}
\vol_{g_i}(M\setminus W_j) \le V_j \textrm{ such that } \lim_{j\to\infty}V_j=0,
\ee
then
\be
\lim_{j\to \infty} d_{GH}(M_j, N)=0.
\ee
where $N$ is the metric completion
of $(M\setminus S, g_\infty)$.
\end{thm}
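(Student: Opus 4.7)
The plan is to combine the intrinsic flat conclusion of Theorem \ref{diam-thm} with Gromov's compactness theorem and the Sormani-Wenger compatibility result between intrinsic flat and Gromov-Hausdorff limits for noncollapsing sequences with a uniform lower Ricci bound. The argument mirrors the deduction of Theorem \ref{Ricci-thm-improved} from Theorem \ref{codim-thm}.

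First, I verify that the hypotheses of Theorem \ref{diam-thm} are all included in those of Theorem \ref{Ricci-diam-thm}: the boundary diameter control \eqref{new-bridge-2}, the intrinsic diameter bound \eqref{m-int-diam}, the boundary volume bound \eqref{area-2}, and the vanishing end-volume \eqref{not-vol-2} are assumed here. Applying Theorem \ref{diam-thm} therefore yields
\[
\lim_{j\to\infty} d_{\mathcal{F}}(M_j', N') = 0,
\]
where $M_j'$ and $N'$ are the settled completions of $(M, g_j)$ and $(M\setminus S, g_\infty)$ respectively.

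Second, Gromov's compactness theorem applies since $\Ric_{g_i} \ge (n-1)H\, g_i$ and $\diam(M_i) \le D_0$; hence $\{M_i\}$ is Gromov-Hausdorff precompact and every subsequence admits a further subsequence converging in the GH sense to a compact metric space $X$. Smooth convergence $g_i \to g_\infty$ on the fixed precompact set $\overline{W_1}$ yields a uniform positive volume lower bound $\vol_{g_i}(M_i) \ge \tfrac{1}{2}\vol_{g_\infty}(W_1) > 0$ for $i$ large, so the sequence is noncollapsing. The Sormani-Wenger compatibility theorem for noncollapsing sequences with uniform lower Ricci bound (see \cite{SorWen2}) then identifies the GH limit $X$ isometrically with the SWIF limit $N'$. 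Under the same noncollapse and Ricci lower bound, the settled completion of $(M\setminus S, g_\infty)$ coincides with its metric completion $N$, so $X = N$. As every subsequential GH limit equals $N$, the full sequence converges: $d_{GH}(M_j, N) \to 0$.

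The step I expect to require the most care is the identification $N' = N$ of the settled and metric completions in our singular setting. This amounts to ruling out limit points of $(M\setminus S, g_\infty)$ with vanishing lower density; the standard argument uses Bishop-Gromov volume comparison on the approximating manifolds $(M, g_i)$ together with the uniform volume noncollapse to conclude that every limit point has positive lower density and is therefore retained by the settled completion.
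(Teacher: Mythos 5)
Your first step is exactly the paper's: the hypotheses \eqref{new-bridge-2}, \eqref{m-int-diam}, \eqref{area-2}, \eqref{not-vol-2} let you invoke Theorem~\ref{diam-thm} and conclude $d_{\mathcal{F}}(M_j',N')\to 0$ with $N'$ the settled completion of $(M\setminus S,g_\infty)$. The gap is in your second step. You pass from SWIF to GH by citing a ``Sormani--Wenger compatibility theorem for noncollapsing sequences with uniform lower Ricci bound.'' No such theorem is available in that generality: the result you have in mind is Theorem~\ref{thm-sw-Ricci} (Theorem 4.16 of \cite{SorWen2}), which requires $\Ricci \ge 0$, and the paper explicitly notes that the extension to an arbitrary uniform lower bound $\Ricci \ge (n-1)H$ with $H<0$ is only \emph{conjectured}. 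Since the theorem you are proving allows $H<0$, your argument (take a GH-subsequential limit $X$ via Gromov compactness, identify $X$ with the SWIF limit via the compatibility theorem, conclude $X=N$) breaks precisely at the identification step, and the surrounding noncollapsing estimate $\vol_{g_i}(M_i)\ge \tfrac12\vol_{g_\infty}(W_1)$ does not rescue it.

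The paper avoids this by using Proposition~\ref{prop-improve-sw} (from the prior Lakzian--Sormani work), which is tailored to the present situation: it assumes a uniform lower Ricci bound (any $H$), a uniform volume upper bound $\vol(M_j)\le V_0$ (available here via Lemma~\ref{lem-vol-vol}), smooth convergence away from $S$, and SWIF convergence to the settled completion $N'$, and it concludes both $d_{GH}(\bar M_j,\bar N)\to 0$ and $\bar N=N'$. In other words, the extra structure of smooth convergence away from the singular set is what substitutes for the missing general SWIF--GH compatibility theorem, and it also delivers the identification of the settled completion with the metric completion that you only sketched via Bishop--Gromov. Replacing your appeal to the conjectural compatibility result by an appeal to Proposition~\ref{prop-improve-sw} repairs the proof and makes it coincide with the paper's.
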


In Theorems \ref{Ricci-thm-improved} and \ref{Ricci-diam-thm}, the diameter hypothesis $\diam(M_i) \le D_0$
is not necessary when the Ricci curvature is nonnegative (see Lemma~\ref{lem-Ricci-diam}. )

The Ricci curvature condition in Theorems \ref{Ricci-thm-improved} and \ref{Ricci-diam-thm} may be replaced by a requirement that the sequence of manifolds have a uniform linear contractibility function (see Theorem~\ref{c-diam-GH} and Theorem \ref{c-codim-thm}. )
See Definition~\ref{defn-contractibility-function} for the definition of a contractibility function.   
Recall that Greene-Petersen have a compactness theorem for
sequences of manifolds with uniform contractibility functions and upper
bounds on their volume \cite{Greene-Petersen}.
 
Subsequently, as an application of our Theorems and relying on diameter bounds and convergence results obtained by Rong-Zhang~\cite{Rong-Zhang} and Tossati~\cite{Tosatti-KE}, we will prove the following Theorem which is known in the literature as the Candella-de la Ossa's conjecture, for Calabi-Yau conifolds. 
 
 \begin{thm}\label{thm-Candelas}
 
 Let $M_0$ be a singular $n-$dimensional normal variety with isolated \emph{conifold} singularities then,\\
	
	\textbf{(i)} Extremal transitions $\bar{M} \to M_0 \leadsto M_t   \;\;\;  (t \neq 0)$ are continuous with respect to the Gromov-Hausdorff distance i.e. there exist families of Ricci-flat K\"{a}ler metics $\bar{g}_s$ on $\bar{M}$ and $g_t$ on $M_t$ and compact metric space $\left( X , d_X \right)$ such that
	\be
		\left( \bar{M} , \bar{g}_s \right) \xrightarrow{G-H} \left( X , d_X \right)\xleftarrow{G-H} \left( M_t , g_t\right)
	\ee\\
	
	\textbf{(ii)}  Flops $\bar{M}_1 \to M_0 \dashrightarrow \bar{M}_2$ are continuous with respect to the Gromov-Hausdorff distance i.e. there families of Ricci-flat K\"{a}hler metrics $\bar{g}_{i,s}$ and a compact metric space $\left( X , d_X \right) $ such that
	
	\be
		\left( \bar{M}_1 , \bar{g}_{1,s} \right) \xrightarrow{G-H} \left( X , d_X \right)\xleftarrow{G-H} \left( \bar{M}_2 , \bar{g}_{2,s}\right)
	\ee
 
\end{thm}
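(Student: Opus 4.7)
The plan is to apply Theorem \ref{Ricci-thm-improved} to each family of Ricci-flat K\"ahler metrics appearing in (i) and (ii), identifying each Gromov-Hausdorff limit with the metric completion of a common smooth Calabi-Yau limit on the regular locus $M_0^{\mathrm{reg}} := M_0 \setminus S$. Because the conifold singular set $S$ is a finite collection of isolated points, $H^{k}(S) = 0$ holds for every $k \geq 1$; in particular the codimension hypothesis of Theorem \ref{Ricci-thm-improved} is automatic, and this theorem (rather than the weaker diameter-based Theorem \ref{Ricci-diam-thm}) is the appropriate tool.

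For part (i), I would treat the resolution family $(\bar{M}, \bar{g}_s)$ and the smoothing family $(M_t, g_t)$ separately. By the work of Tosatti \cite{Tosatti-KE} and Rong-Zhang \cite{Rong-Zhang}, both families converge smoothly away from the exceptional set $E \subset \bar{M}$ and the vanishing cycles $V_t \subset M_t$ respectively to a singular Ricci-flat K\"ahler metric $g_\infty$ on $M_0^{\mathrm{reg}}$, and each family satisfies a uniform diameter bound. Take the precompact exhaustion $W_j := M_0 \setminus \bigsqcup_k B_{g_\infty}(p_k, 1/j)$ of $M_0^{\mathrm{reg}}$ and transport it through the natural biholomorphisms $\bar{M} \setminus E \cong M_0^{\mathrm{reg}} \cong M_t \setminus V_t$. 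Ricci-flatness gives the Ricci hypothesis, Rong-Zhang supplies the uniform diameter bound, the area estimate $\vol_{g_i}(\partial W_j) \leq A_0$ follows from smooth convergence on the fixed compact $W_j \subset M_0^{\mathrm{reg}}$, and the volume decay $\vol_{g_i}(M \setminus W_j) \leq V_j \to 0$ follows from uniform comparability of the $g_i$ with the standard Calabi-Yau conifold model in small neighborhoods of the nodes. Theorem \ref{Ricci-thm-improved} then yields $(\bar{M}, \bar{g}_s) \xrightarrow{\mathrm{GH}} N$ and $(M_t, g_t) \xrightarrow{\mathrm{GH}} N$, where $N$ is the metric completion of $(M_0^{\mathrm{reg}}, g_\infty)$, and I would set $X := N$. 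Part (ii) follows the same template: the two small resolutions $\bar{M}_1, \bar{M}_2$ are biholomorphic to $M_0^{\mathrm{reg}}$ away from real codimension at least four exceptional loci, and the results of Rong-Zhang and Tosatti give smooth convergence of both $\bar{g}_{1,s}$ and $\bar{g}_{2,s}$ to the same $g_\infty$, so a second invocation of Theorem \ref{Ricci-thm-improved} produces the same limit $X = N$.

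The main obstacle is verifying the volume decay $V_j \to 0$ uniformly in the family parameter, since the diameter bound is directly quoted from Rong-Zhang and the area bound follows cleanly from smooth convergence on each fixed compact $W_j$. The volume decay amounts to a uniform local control of the Ricci-flat metrics $g_i$ by the standard Calabi-Yau conifold model in a fixed neighborhood of each node, uniform across both the family parameter ($s$ or $t$) and the exhaustion index $j$. Such control should be extractable from the asymptotic analysis in the Rong-Zhang and Tosatti papers, but it is the step where real work, beyond invoking our general convergence theorems, is required.
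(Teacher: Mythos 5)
Your overall skeleton is the one the paper uses: quote Tosatti (Proposition \ref{prop-resolution-convergence}) and Rong--Zhang (Proposition \ref{prop-smoothing-convergence}) for smooth convergence away from the nodes and the uniform diameter bound, exhaust $M_0\setminus S$ by complements of small balls about the finitely many ordinary double points, note that $H^{n-1}(S)=0$ is automatic for a finite set, and feed everything into Theorem \ref{Ricci-thm-improved} so that both the resolution family and the smoothing family converge in the Gromov--Hausdorff sense to the metric completion $N$ of $(M_0\setminus S, g_\infty)$, which serves as the common limit $X$ in (i) and (ii). Up to that point you and the paper agree.

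The genuine gap is exactly the step you flag at the end and then leave open: the verification of the two volume hypotheses. First, your claim that $\vol_{g_i}(\partial W_j)\le A_0$ ``follows from smooth convergence on the fixed compact $W_j$'' only produces a constant depending on $j$ (smooth convergence gives $\vol_{g_i}(\partial W_j)\to\vol_{g_\infty}(\partial W_j)$ for each fixed $j$), whereas Theorem \ref{Ricci-thm-improved} needs a single $A_0$ valid for all $j$ as $\partial W_j$ collapses onto the nodes. Second, you propose to obtain $\vol_{g_i}(M\setminus W_j)\le V_j\to 0$ from a uniform comparison of the Ricci-flat metrics with the standard Calabi--Yau conifold model near each node, extracted from the asymptotic analysis in Tosatti and Rong--Zhang; this is precisely the ``real work'' you admit is missing, and it is heavier machinery than what is needed. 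The paper closes both points at once with a soft comparison-geometry argument: since the metrics are Ricci-flat, the Bishop--Gromov monotonicity (Theorem \ref{thm-Bishop-Gromov}) says the ratios $\vol\bigl(B(p,r)\bigr)/\vol_{H}\bigl(B(r)\bigr)$ and $\vol\bigl(\partial B(p,r)\bigr)/\vol_{H}\bigl(\partial B(r)\bigr)$ are nonincreasing in $r$, and the small-radius expansion of Proposition \ref{prop-volume-ratio} shows they tend to $1$ as $r\to 0$, hence are bounded by $1$ for all $r$. Applied to the balls defining the exhaustion, this yields $\vol_{g_i}(\partial W_j)\le C\, r_j^{\,2n-1}$ and $\vol_{g_i}(M\setminus W_j)\le C\, r_j^{\,2n}\to 0$, uniformly in the family parameter $s$ or $t$ and in $j$, with no reference to the conifold model asymptotics at all. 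So your proposal is not wrong in strategy, but as written it does not establish the hypotheses of Theorem \ref{Ricci-thm-improved}; the missing ingredient is the Ricci-flat volume comparison argument.
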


 \begin{rmrk}
 Different proofs of Candella-de la Ossa's conjecture in various settings has been given in Rong-Zhang~\cite{Rong-Zhang} and Song~\cite{Song-Candelas-conj}. Our proof is a more coarse geometric proof and to a great deal avoids the advanced PDE techniques used in other proofs.
\end{rmrk}

The paper is organized as follows: In Section~\ref{Sect-review}, we will briefly review all the notions and theorems that we have used in this paper; In Section~\ref{main-result-I}, we give a proof of our
theorems which assume $\mathcal{H}^{n-1}(S)=0$ [Theorem~\ref{codim-thm},Theorem~\ref{Ricci-thm-improved} and Theorem~\ref{c-codim-thm}]; Section~\ref{main-result-II} is devoted to the proof of our
theorems which replace the Hausdorff measure hypothesis with diameter bounds [Theorem~\ref{diam-thm}, Theorem~\ref{Ricci-diam-thm} and Theorem~\ref{c-diam-GH}]; Section~\ref{Sect-examples} discusses a few interesting examples to illustrate the underlying phenomena and also to prove the necessity of some our hypotheses and finally, In Section \ref{sec-applications}, we present a proof of Theorem \ref{thm-Candelas}.

\section*{Acknowledgements}The author would like to thank Christina Sormani for her constant encouragement, assistance with exposition and particularly for sketching ideas towards Examples~\ref{ex-pinched-torus} and~\ref{ex-slit-torus}. Also many thanks go to Xiaochun Rong for teaching the author about the Candella-de la Ossa's conjecture.
He would like to thank
Dimitri Burago specifically for drawing Example~\ref{ex-pulled-torus} in conversations with his advisor that were then related to him.
The author also would like to express his gratitude to Jeff Cheeger and Fanghua Lin for teaching him advanced Riemannian Geometry and Geometric Measure Theory. The author is very grateful to Karl-Theodor Sturm for his interest in the author's research and for the opportunity to work with him as a postdoctoral fellow at the Hausdorff Center for Mathematics in Bonn.

%=================================================================%s
%======================= SECTION: BACKGROUND ====================%
%=================================================================%

\section{Background} \label{Sect-review}

\subsection{Metric Completion and Settled Completion}

We give a very brief review of the definitions of metric completion and settled completion of a metric space:

\begin{defn}\label{metric-completion}
Given a precompact metric space, $X$, the metric completion, $\bar{X}$
of $X$ is the space of Cauchy sequences, $\{x_j\}$, in $X$ with the
metric
\be
d(\{x_j\},\{y_j\})=\lim_{j\to\infty} d_X(x_j,y_j),
\ee
and where two Cauchy sequences are identified if the distance between
them is $0$.  There is an isometric embedding, $\varphi: X \to \bar{X}$,
defined by $\varphi(x)=\{x\}$ where $\{x\}$ is a constant sequence.
Lipschitz functions, $F: X\to Y$, extend to $F:\bar{X} \to Y$ via
$F(\{x_j\})=\lim_{j\to\infty} F(x_j)$ as long as $Y$ is complete.
\end{defn}

The definition of the settled completion which is essential in studying the Intrinsic Flat convergence of metric spaces is as follows

\begin{defn}[Sormani-Wenger \cite{SorWen2}] \label{defn-positive-density}
The settled completion, $X'$, of a metric space $X$ with a measure $\mu$
is the collection of points $x$ in the metric completion $\bar X$
which have positive  lower density
\be\label{eq-positive-density}
\liminf_{r\to 0} \mu(B_p(r))/r^m >0.
\ee
The resulting space is then ``completely settled". 
\end{defn}

\subsection{GH and SWIF Distances}

The Gromov-Hausdorff (GH) distance was defined by Gromov as 
\be
d_{GH}(X_1, X_2)= \inf\left\{ d^Z_H(\varphi_1(X_1), \varphi_2(X_2)): \,\,\varphi_i: X_i \to Z\right\}
\ee
where the infimum is taken over all common metric spaces, $Z$,
and all isometric embeddings, $\varphi_i: X_i \to Z$.
Limit spaces obtained from the Gromov-Hausdorff convergence are compact metric spaces \cite{Gromov-metric}. 

The Sormani Wenger Intrinsic Flat (SWIF) distance is defined similarly by replacing the Hausdorff distance by the flat distance, viewing $\varphi_i(M_i)$ as integral current spaces in the sense of \cite{AK}.
The limit spaces obtained under intrinsic flat convergence are ``integral current spaces": completely settled metric spaces with an integral current structure that defines a notion of integration over $m$-forms. When the limit is the settled completion of an open manifold, this the
integral current structure is simply defined by integration over that open manifold (see ~\cite{SorWen2} for more details.)

The SWIF distance, $d_{\mathcal{F}}(M_1, M_2)$, is estimated by explicitly constructing a filling manifold, $B^{m+1}$,
between the two given manifolds, finding the excess boundary
manifold $A^m$ satisfying
\be\label{Stokes}
\int_{\varphi_1(M_1)}\omega -\int_{\varphi_2(M_2)}\omega=\int_B d\omega +\int_A \omega,
\ee
 and summing their volumes
\be \label{est-int-flat}
d_{\mathcal{F}}(M^m_1, M^m_2) \le \vol_m(A^m)+\vol_{m+1}(B^{m+1}). 
\ee
In the next subsection we present review a theorem which clarifies
the concept of the intrinsic flat distance while proving a means of
estimating it.

\subsection{Estimating the Gromov Hausdorff and Intrinsic Flat Distance.}
We can estimate both of these distances by applying the following theorem which was proven in prior work of the author with Sormani ~\cite{Lakzian-Sormani} by constructing an explicit space $Z$ and isometric embeddings $\varphi_i$.  Here we have cut and pasted the exact
theorem statement along with the corresponding figure from
that paper:   

\begin{thm} \label{thm-subdiffeo}
Suppose $M_1=(M,g_1)$ and $M_2=(M,g_2)$ are oriented
precompact Riemannian manifolds
with diffeomorphic subregions $U_i \subset M_i$ and
diffeomorphisms $\psi_i: U \to U_i$ such that
\be \label{thm-subdiffeo-1}
\psi_1^*g_1(V,V)
< (1+\epsilon)^2 \psi_2^*g_2(V,V) \qquad \forall \, V \in TU,
\ee
and
\be \label{thm-subdiffeo-2}
\psi_2^*g_2(V,V) <
(1+\epsilon)^2 \psi_1^*g_1(V,V) \qquad \forall \, V \in TU.
\ee
Taking the extrinsic diameters,
\be \label{DU}
D_{U_i}= \sup\{\diam_{M_i}(W): \, W\textrm{ is a connected component of } U_i\} \le \diam(M_i).
\ee
we define a hemispherical width,
\be \label{thm-subdiffeo-3}
a>\frac{\arccos(1+\epsilon)^{-1} }{\pi}\max\{D_{U_1}, D_{U_2}\}.
\ee
Taking the difference in distances with respect to the outside manifolds,
\be \label{lambda}
\lambda=\sup_{x,y \in U}
|d_{M_1}(\psi_1(x),\psi_1(y))-d_{M_2}(\psi_2(x),\psi_2(y))|,
\ee
we define heights,
\be \label{thm-subdiffeo-4}
h =\sqrt{\lambda ( \max\{D_{U_1},D_{U_2}\} +\lambda/4 )\,},
\ee
and
\be \label{thm-subdiffeo-5}
\bar{h}= \max\{h,  \sqrt{\epsilon^2 + 2\epsilon} \; D_{U_1}, \sqrt{\epsilon^2 + 2\epsilon} \; D_{U_2} \}.
\ee
Then the Gromov-Hausdorff distance \emph{ the metric
completions} is bounded,
\be \label{thm-subdiffeo-6}
d_{GH}(\bar{M}_1, \bar{M}_2 ) \le a + 2\bar{h} +
\max\left\{ d^{M_1}_H(U_1, M_1), d^{M_2}_H(U_2, M_2)\right\},
\ee
and the intrinsic flat distance between the
\\emph{settled completions} is bounded,
\begin{eqnarray*}
d_{\mathcal{F}}(M'_1, M'_2) &\le&
\left(2\bar{h} + a\right) \Big(
\vol_m(U_{1})+\vol_m(U_2)+\vol_{m-1}(\partial U_{1})+\vol_{m-1}(\partial U_{2})\Big)\\
&&+\vol_m(M_1\setminus U_1)+\vol_m(M_2\setminus U_2),
\end {eqnarray*}

\end{thm}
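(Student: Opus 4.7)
The plan is to construct an explicit common metric space $Z$ together with isometric embeddings $\varphi_i \colon M_i \hookrightarrow Z$, and then read off both distance estimates directly from the geometry of $Z$ and an application of Stokes' formula \eqref{Stokes}. Concretely, I would build $Z$ by gluing $M_1$ and $M_2$ along a ``filling cylinder'' over the common chart $U$: set
\[
Z \;=\; M_1 \;\sqcup_{\psi_1}\; \bigl(U \times [0,\,a+2\bar h]\bigr) \;\sqcup_{\psi_2}\; M_2,
\]
where the bottom slice is identified with $\psi_1(U) \subset M_1$ and the top slice with $\psi_2(U) \subset M_2$. Equip the cylinder with a warped-product metric whose horizontal slice at height $t$ interpolates linearly between $\psi_1^*g_1$ and $\psi_2^*g_2$, and whose profile across the $[0, a + 2\bar h]$ direction is shaped so that the first $a$ units of height behave like a hemisphere of diameter $\max\{D_{U_1},D_{U_2}\}$ (this is where the arccos in \eqref{thm-subdiffeo-3} enters), while the remaining $2\bar h$ units are a flat collar of height $\bar h$ attached to each boundary copy to absorb the $\epsilon$-distortion \eqref{thm-subdiffeo-1}--\eqref{thm-subdiffeo-2} and the extrinsic discrepancy $\lambda$ from \eqref{lambda}--\eqref{thm-subdiffeo-5}.

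The first key verification is that $\varphi_i \colon M_i \to Z$ is actually isometric, i.e.\ that no pair of points $p, q \in M_i$ can be joined by a shorter path in $Z$ passing through the cylinder and the other copy. This is the geometric heart of the argument: a path that leaves $M_i$, traverses the cylinder to the other copy, travels there, and returns would save at most a factor of $(1+\epsilon)^{-1}$ in its horizontal portion, and this savings must be outweighed by the mandatory hemispherical detour of width at least $a$ plus the collar heights $2\bar h$. The choices \eqref{thm-subdiffeo-3}, \eqref{thm-subdiffeo-4}, \eqref{thm-subdiffeo-5} are precisely calibrated to make this the case; the computation reduces to a one-dimensional comparison between a chord and a hemispherical arc, and $\lambda$ appears because the \emph{extrinsic} distances on $U_1, U_2$ are what are compared across the cylinder, not the intrinsic ones.

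For the Gromov--Hausdorff bound \eqref{thm-subdiffeo-6}, given any $p \in M_1$ either $p \in U_1$, in which case the corresponding point $\psi_2(\psi_1^{-1}(p)) \in M_2$ is joined to $p$ by a vertical path of length at most $a + 2\bar h$, or $p \in M_1 \setminus U_1$, in which case $p$ is within $d^{M_1}_H(U_1, M_1)$ of $U_1$ and then within $a + 2\bar h$ of $M_2$. The symmetric argument for $M_2$ yields the stated bound on $d_H^Z(\varphi_1(M_1), \varphi_2(M_2))$, and passing to metric completions presents no new difficulty since the embeddings extend isometrically.

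For the intrinsic flat bound, I would take $B$ to be the cylinder portion of $Z$, viewed as an $(m{+}1)$-dimensional integral current with $\partial B = \varphi_2(U_2) - \varphi_1(U_1)$ plus the lateral boundary $\partial U \times [0, a+2\bar h]$, and let $A$ be the ``excess'' $m$-current $\varphi_1(M_1 \setminus U_1) - \varphi_2(M_2 \setminus U_2)$ together with the lateral boundary of the cylinder. Applying Stokes in the form \eqref{Stokes} and summing volumes as in \eqref{est-int-flat}, the cylinder volume is bounded by $(a+2\bar h)$ times a slicewise average of the cross-sectional volumes $\vol_m(U_i)$ and the lateral volume by $(a+2\bar h)\,\vol_{m-1}(\partial U_i)$, while the complements $M_i \setminus U_i$ contribute their own volumes directly. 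Collecting terms yields the stated flat bound. The main obstacle, as indicated above, is verifying the no-shortcut property in Step~2; once that is in place, both estimates follow from standard filling arguments.
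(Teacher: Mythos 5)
Your proposal follows essentially the same route as the proof this paper relies on: the paper does not reprove Theorem~\ref{thm-subdiffeo} but quotes it (statement and Figure~\ref{fig-subdiffeo-1}) from the prior Lakzian--Sormani work, whose argument is exactly the one you sketch --- an explicit space $Z$ obtained by gluing a bridge over $U$ (hemispherical of width $a$, with buffer heights $\bar{h}$ absorbing both the $(1+\epsilon)$-distortion and the extrinsic discrepancy $\lambda$) to $M_1$ and $M_2$, a no-shortcut verification that the $\varphi_i$ are distance-preserving, the GH bound read off from the Hausdorff distance in $Z$ plus $d_H^{M_i}(U_i,M_i)$, and the SWIF bound from the bridge as the filling current $B$ with the complements $M_i\setminus U_i$ and the lateral boundary as excess $A$. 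The only caveat is that the quantitative heart --- deriving the thresholds \eqref{thm-subdiffeo-3}--\eqref{thm-subdiffeo-5} from the chord-versus-arc and Pythagorean estimates that rule out shortcuts --- is asserted (``precisely calibrated'') rather than executed, and that calibration is exactly the content of the filling-bridge lemmas in the cited prior paper.
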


\begin{figure}[h] %  figure placement: here, top, bottom, or page
   \centering
   \includegraphics[width=5in]{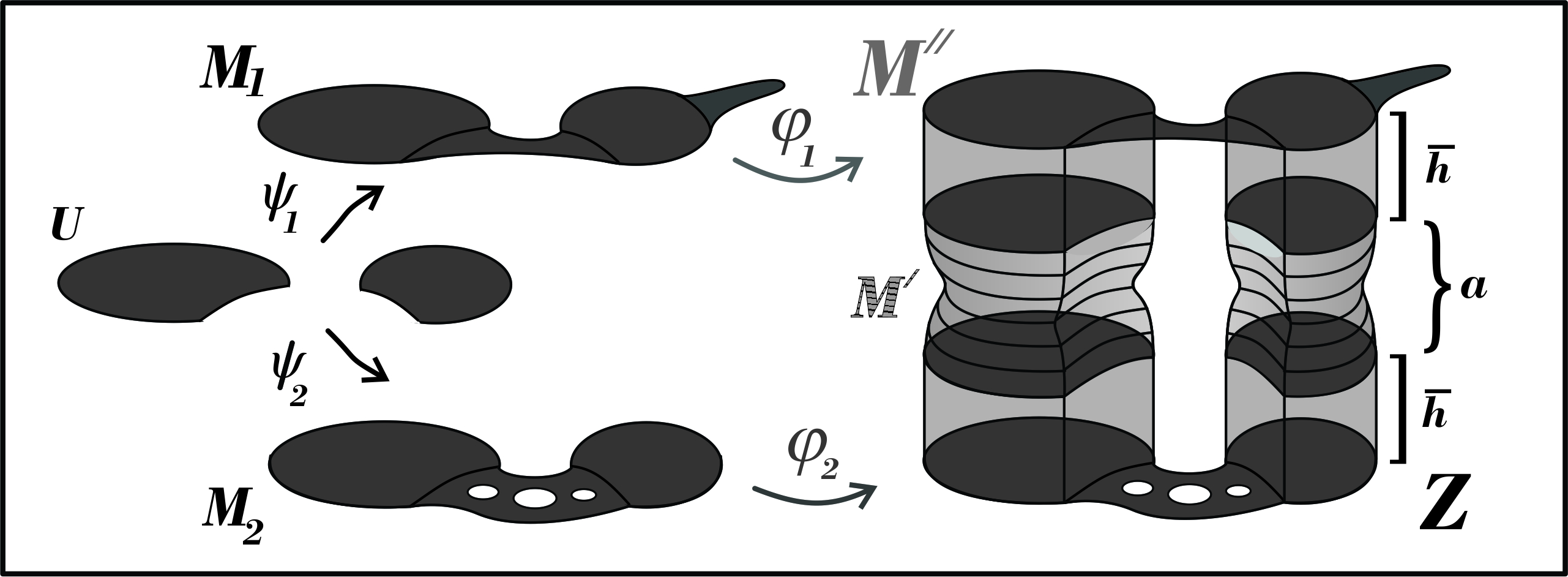}
   \caption{Creating $Z$ for Theorem~\ref{thm-subdiffeo}.}
   \label{fig-subdiffeo-1}
\end{figure}

Note that permission to reprint this figure along with the
statement of Theorem~\ref{thm-subdiffeo} has been granted
by the author and Christina Sormani who own the copyright to this figure
that first appeared in \cite{Lakzian-Sormani}.

\subsection{Review of Compactness Theorems}

\begin{defn}\label{defn-contractibility-function}
A function $\rho:[0,r_0]\to [0,\infty)$ is a contractibility
function for a manifold $M$ with metric $g$ if 
every ball $B_p(r)$ is contractible within $B_p(\rho(r))$.
\end{defn}

\begin{thm} [Gromov \cite{Gromov-metric}] 
A sequence of compact Riemannian manifolds, $(M_j,g_j)$,
such that $\diam(M_j) \le D$ and $Ricci_{M_j} \ge -H$, has
a subsequence converging in the Gromov-Hausdorff
sense to a metric space $(X,d)$. 
\end{thm}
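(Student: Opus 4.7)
The plan is to deduce this from Gromov's general precompactness criterion, which says that a family of compact metric spaces that is \emph{uniformly totally bounded} (uniform diameter bound together with, for every $\varepsilon>0$, a uniform upper bound $N(\varepsilon)$ on the number of $\varepsilon$-balls needed to cover any space in the family) is precompact in the Gromov--Hausdorff topology. Thus the task reduces to producing, for each $\varepsilon>0$, a uniform covering number $N(\varepsilon)$ for the sequence $(M_j,g_j)$.

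First I would fix $\varepsilon>0$ and choose, in each $M_j$, a maximal $\varepsilon$-separated subset $\{p_1,\dots,p_{N_j}\}$; then the balls $B_{p_k}(\varepsilon)$ cover $M_j$ while the half-balls $B_{p_k}(\varepsilon/2)$ are pairwise disjoint. Hence
\be
N_j \;\le\; \frac{\vol_{g_j}(M_j)}{\min_k \vol_{g_j}(B_{p_k}(\varepsilon/2))}.
\ee
The key input is Bishop--Gromov relative volume comparison, which under $\Ric_{g_j}\ge -(n-1)H g_j$ asserts that $r\mapsto \vol_{g_j}(B_p(r))/V_H(r)$ is non-increasing, where $V_H(r)$ denotes the volume of a ball of radius $r$ in the simply connected space form of constant sectional curvature $-H/(n-1)$ (or its appropriate analogue). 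Applying this with $r=\varepsilon/2$ and $R=D$ and using $\vol_{g_j}(B_p(D))=\vol_{g_j}(M_j)$ (since $\diam(M_j)\le D$) yields
\be
\frac{\vol_{g_j}(B_{p_k}(\varepsilon/2))}{\vol_{g_j}(M_j)} \;\ge\; \frac{V_H(\varepsilon/2)}{V_H(D)},
\ee
and consequently
\be
N_j \;\le\; \frac{V_H(D)}{V_H(\varepsilon/2)} \;=:\; N(\varepsilon),
\ee
which depends only on $n$, $H$, $D$ and $\varepsilon$, not on $j$.

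With this uniform covering bound and the given uniform diameter bound $D$, Gromov's precompactness criterion delivers a subsequence that converges in the Gromov--Hausdorff sense to some compact metric space $(X,d)$, completing the proof.

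The main technical obstacle is the Bishop--Gromov comparison: everything else is elementary metric geometry and a standard diagonal/Arzel\`a--Ascoli-type extraction implicit in Gromov's criterion. The comparison estimate itself is classical and I would simply cite it rather than reprove it, since in this background section the theorem is being \emph{reviewed} rather than established from scratch.
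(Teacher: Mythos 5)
Your argument is correct: it is the classical proof of Gromov's precompactness theorem, reducing to uniform total boundedness and obtaining the uniform covering number $N(\varepsilon)$ from Bishop--Gromov volume comparison together with the diameter bound. The paper itself offers no proof of this statement --- it is quoted as background with a citation to Gromov --- and your argument is exactly the standard one behind that citation, so there is nothing of substance to compare; the only cosmetic point is the normalization of the Ricci bound ($-H$ versus $-(n-1)H$), which you already flag and which only affects constants.
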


\begin{thm}[Greene-Petersen\cite{Greene-Petersen}] 
A sequence of compact Riemannian manifolds, $(M_j,g_j)$,
such that $\vol(M_j) \le V$ and such that
there is a uniform contractibility
function,  $\rho:[0,r_0]\to [0,\infty)$, for all the $M_j$, has
a subsequence converging in the Gromov-Hausdorff
sense to a metric space $(X,d)$. 
\end{thm}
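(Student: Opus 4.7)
The plan is to invoke Gromov's precompactness criterion: a sequence of compact metric spaces admits a Gromov--Hausdorff convergent subsequence provided it is uniformly totally bounded, i.e.\ for every $\varepsilon>0$ there exists $N(\varepsilon)$ (independent of $j$) so that each $M_j$ can be covered by at most $N(\varepsilon)$ metric balls of radius $\varepsilon$. Because each $M_j$ is connected, uniform total boundedness automatically yields a uniform diameter bound (any two points can be joined by a chain of $\le N(\varepsilon)$ overlapping $\varepsilon$-balls), so the hypotheses of Gromov's theorem will be met once uniform total boundedness is established.

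The strategy for uniform total boundedness is the standard volume-packing one: produce a uniform positive lower bound $v(\varepsilon)$ on the volume of every metric $\varepsilon$-ball, valid for all $p\in M_j$ and all $j$. Given such a bound, pick a maximal $\varepsilon$-separated subset $\{p_1,\dots,p_N\}\subset M_j$; the balls $B_{p_i}(\varepsilon/2)$ are pairwise disjoint, so $N\cdot v(\varepsilon/2)\le\vol(M_j)\le V$, yielding $N\le V/v(\varepsilon/2)$, and by maximality the enlarged balls $B_{p_i}(\varepsilon)$ already cover $M_j$. This gives the required $N(\varepsilon)$.

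The main work, and the only real obstacle, is producing the uniform volume lower bound on small balls purely from the uniform contractibility function $\rho$. Suppose, for contradiction, that $\vol_{g_j}(B_p(r))$ could be made arbitrarily small at some $p\in M_j$ for a fixed $r$ in the domain of $\rho$. Following the scheme of Greene--Petersen, one uses the contractibility hypothesis to inductively fill the skeleta of a nerve-type simplicial complex built over a sufficiently fine covering of $M_j$: each $k$-simplex is filled by iterated null-homotopies of its boundary, each of which takes place inside a ball of radius at most $\rho^{(k)}(r)$, so the volume of the image of each simplex is controlled in terms of $\vol(B(\rho^{(k)}(r)))$, which is small by assumption. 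The resulting piecewise-smooth map carries nontrivial topological information (its degree onto $M_j$, or a nontrivial class in $H_n(M_j)$ coming from the fact that $M_j$ is a closed orientable manifold), and this forces a matching lower bound on the total image volume. Comparing the two estimates contradicts the assumption that $\vol(B_p(r))$ can be arbitrarily small, so $v(r)>0$.

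Once the uniform ball-volume lower bound $v(\varepsilon)$ is established, the packing argument of the second paragraph yields $N(\varepsilon)$, uniform total boundedness holds, and Gromov's compactness theorem supplies the desired GH-convergent subsequence, with some limit metric space $(X,d)$. The key difficulty is entirely contained in the volume-lower-bound step, where one must translate the purely metric/homotopical contractibility data into a genuine volume estimate; the rest of the argument is a standard packing-plus-Gromov construction.
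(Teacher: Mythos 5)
This theorem is quoted in the paper purely as background, with no proof given: it is cited directly to Greene--Petersen, so there is no internal argument to compare against. Your outline follows the same route as the original Greene--Petersen proof (extract a uniform lower bound on the volume of small balls from the contractibility function via a nerve-type filling construction and a degree/topological nontriviality argument, then run the standard packing argument and Gromov's precompactness criterion, using connectedness to get the uniform diameter bound), and it is sound as a sketch, with the caveat that the ball-volume lower bound step --- which is the actual content of the cited result --- is only outlined rather than proved.
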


In \cite{SorWen1} the following theorems were proven
which can be applied to deduce information about the Gromov-Hausdorff
limit of a sequence.

\begin{thm}[Sormani-Wenger \cite{SorWen2}] \label{thm-sw-contractible}
If a sequence of oriented compact Riemannian manifolds, $(M_j,g_j)$,
with a uniform linear contractibility
function, $\rho: [0,\infty)\to [0,\infty)$
and a uniform upper bound on volume, $\vol(M_j)\le V$,
converges in the Gromov-Hausdorff
sense to $(X,d)$, then it converges in the intrinsic flat Sense
to $(X,d,T)$ (see Theorem 4.14 of \cite{SorWen2}).
\end{thm}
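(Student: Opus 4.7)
The plan is to construct the integral current $T$ on $X$ as the flat limit of the pushforwards of the fundamental classes of the $M_j$ into a common metric space furnished by the Gromov-Hausdorff convergence, and then to use the linear contractibility hypothesis to ensure no mass is lost in passing to this limit.

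First I would exploit the GH convergence $M_j \to X$ to embed every manifold isometrically into a single compact metric space. Concretely, produce a compact $Z$ and isometric embeddings $\varphi_j : M_j \to Z$ together with $\varphi_\infty : X \to Z$ such that $\varphi_j(M_j)$ Hausdorff-converges to $\varphi_\infty(X)$ inside $Z$. For each $j$ set $T_j := (\varphi_j)_\# [M_j]$, viewed as an integer rectifiable current in $Z$ with $\partial T_j = 0$ and $\mass(T_j) = \vol(M_j) \le V$. The Ambrosio--Kirchheim compactness theorem for integral currents with uniformly bounded mass and supports in a precompact set then delivers a subsequence $T_{j_k}$ converging in the flat norm to an integral current $T_\infty$ in $Z$ with $\partial T_\infty = 0$.

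Second I would identify $\spt(T_\infty)$ with $\varphi_\infty(X)$. The inclusion $\spt(T_\infty) \subseteq \varphi_\infty(X)$ is immediate from Hausdorff convergence of supports: any point carrying positive local mass of some $T_{j_k}$ lies in $\varphi_{j_k}(M_{j_k})$, and these sets Hausdorff-converge to $\varphi_\infty(X)$. The reverse inclusion is the real content: one must rule out cancellation of mass at small scales. This is where the uniform linear contractibility $\rho(r)=\lambda r$ enters. By Gromov's filling-volume argument, linear contractibility of balls of radius $r$ inside balls of radius $\lambda r$ produces Lipschitz null-homotopies of small spheres with mass bounded by $C r^{m}$. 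Consequently each $M_j$ satisfies a uniform Euclidean-type isoperimetric inequality, so that for every $p \in M_j$ and every sufficiently small $r$, $\mass(T_j \rstr B_p(r)) \ge c\, r^{m}$ with $c$ independent of $j$. This uniform lower-density bound passes to $T_\infty$ at every point of $\varphi_\infty(X)$, forcing equality of supports.

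Finally, with the supports identified, the triple $(X,d,T)$ defined by pulling $T_\infty$ back through $\varphi_\infty$ is an integral current space in the sense of Ambrosio--Kirchheim: the positive-density requirement of \defref{defn-positive-density} is precisely what the previous step established, so the underlying metric space is completely settled. The desired intrinsic flat convergence $M_{j_k} \to (X,d,T)$ then holds by construction inside $Z$, and a standard argument (uniqueness of the GH limit plus uniqueness of flat limits among integral current structures with that support) upgrades subsequential convergence to convergence of the full sequence. I expect the main obstacle to be the uniform lower-density estimate of the second step: executing it rigorously requires an inductive construction of the filling along a dyadic scale using the contractibility function, together with a sharp accounting of how mass accumulates through the contraction, so that the isoperimetric constant is genuinely independent of $j$ and survives the passage to the flat limit.
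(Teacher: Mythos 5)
You should first note that the paper contains no proof of this statement at all: it is quoted as background, attributed to Theorem 4.14 of Sormani--Wenger \cite{SorWen2}, so your proposal can only be compared with their original argument. Your architecture does match theirs: use the GH convergence to place everything in one compact space $Z$, push forward the fundamental classes, invoke Ambrosio--Kirchheim compactness to extract a flat limit $T_\infty$, and then use the uniform linear contractibility function to show that the support of $T_\infty$ is all of $\varphi_\infty(X)$, so that the GH and intrinsic flat limits coincide.

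There is, however, a genuine gap at the decisive step. You assert that the uniform bound $\mass(T_j\rstr B_p(r))\ge c\,r^m$ ``passes to $T_\infty$ at every point of $\varphi_\infty(X)$''. It does not, and this is precisely the cancellation phenomenon the theorem has to rule out: under flat convergence mass is lower semicontinuous only in the direction $\mass(T_\infty)\le\liminf_k\mass(T_{j_k})$, so pointwise lower mass bounds on the approximating currents can simply vanish in the limit (two nearby, oppositely oriented sheets each satisfy such a bound in every ball yet converge flatly to $0$). The quantity that does survive the flat limit is not mass in balls but the filling volume of the small spheres: the key lemma of Sormani--Wenger is that if $x_{j}\in\spt T_{j}$ converge to $x$ in $Z$ and the filling volumes of the slices $\partial\bigl(T_{j}\rstr B_{x_{j}}(r)\bigr)$ are uniformly bounded below for almost every small $r$, then $x\in\spt T_\infty$. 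The uniform linear contractibility function, via Gromov's filling inequality and the Greene--Petersen argument, supplies exactly this filling-volume lower bound (and, as a byproduct, the ball-volume lower bound you quote, which is what gives settledness of the limit space). So the repair is to replace ``the lower density bound passes to the limit'' by this filling-volume criterion for membership in the support; with that substitution the rest of your outline --- identification of the supports, the integral current space structure on $(X,d,T)$, and upgrading subsequential convergence to full convergence via uniqueness of the GH limit --- proceeds as you describe.
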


Recall that, in general, the intrinsic flat limits and Gromov-Hausdorff limits
need not agree \cite[Examples 2.3 and 2.3]{Lakzian-Sormani} because
intrinsic flat limits do not include points with $0$ density as in (\ref{eq-positive-density}).
In fact intrinsic flat limits may exist when Gromov-Hausdorff limits do not
[Example~\ref{ex-many-tips-scalar}].

\begin{thm}[Sormani-Wenger] \label{thm-sw-Ricci}
If a sequence of oriented compact Riemannian manifolds, $(M_j,g_j)$,
such that $\diam(M_j) \le D$ and $Ricci_{M_j} \ge 0$
and $vol(M_j) \ge V_0$ converges in the Gromov-Hausdorff
sense to $(X,d)$, then it converges in the intrinsic flat Sense
to $(X,d,T)$  (see Theorem 4.16 of \cite{SorWen2}).
\end{thm}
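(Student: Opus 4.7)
The plan is to combine Wenger's compactness theorem for integral currents with the noncollapsed structure theory of Cheeger and Colding. First, I would verify that the sequence $\{[M_j]\}$, viewed as integral currents, has uniformly bounded mass and boundary mass. Since each $M_j$ is compact and oriented without boundary, $\partial [M_j] = 0$. For the upper mass bound, I would invoke Bishop--Gromov volume comparison: $\Ric_{M_j} \ge 0$ together with $\diam(M_j) \le D$ gives $\vol(M_j) \le C(n,D)$. Wenger's compactness theorem then extracts a subsequence $(M_{j_k}, g_{j_k})$ converging in the SWIF sense to some integral current space $(Y, d_Y, T)$, where $T$ arises as a weak limit of the pushforwards of $[M_{j_k}]$ under the GH-embeddings into a common ambient space.

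Next, I would identify $(Y, d_Y)$ with the GH limit $(X, d)$. In general, when a sequence converges both in GH and in SWIF, the SWIF limit isometrically embeds into the GH limit, with the embedded image being precisely the settled completion of the support of the limit current. Thus $Y \subseteq X$, and matters reduce to proving every point $p \in X$ satisfies the positive lower density condition \eqref{eq-positive-density}, so that no points get discarded passing from the metric completion to the settled completion.

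This density statement is the main content, and the main obstacle. Here I would invoke Colding's volume convergence theorem together with Bishop--Gromov comparison: under $\Ric \ge 0$, $\diam \le D$, and the noncollapsing hypothesis $\vol(M_j) \ge V_0$, the normalized Riemannian volume measures $d\vol_{g_j}$ converge weakly under the GH-embeddings to the $n$-dimensional Hausdorff measure $\hm^n$ on $X$. Bishop--Gromov then passes to the limit, yielding a uniform lower density bound
\begin{equation}
\liminf_{r \to 0} \frac{\hm^n(B_p(r))}{r^n} \ge c(n, V_0, D) > 0 \qquad \forall p \in X.
\end{equation}
Every point of $X$ therefore satisfies \eqref{eq-positive-density}, forcing $Y = X$ as sets. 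Since the ambient metric on $Y$ inherited from SWIF coincides with the ambient metric on $X$ inherited from GH under the common isometric embeddings produced during the compactness argument, we conclude $(Y, d_Y) = (X, d)$, and the limit current $T$ is precisely the one determined by integration against the limiting Hausdorff measure on $X$.

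Finally, upgrading subsequential SWIF convergence to convergence of the full sequence is a standard uniqueness argument: any subsequence admits a further SWIF-convergent sub-subsequence, and by the argument above every such further limit must equal $(X, d, T)$ with $T$ uniquely determined by the GH limit and the limit of the volume measures. The genuinely deep input is Cheeger--Colding noncollapsed theory (invoked through Colding's volume convergence and the resulting uniform positive density at limit points); the remaining bookkeeping — mass bounds via Bishop--Gromov, Wenger compactness, and matching the settled completion to the GH limit — is then essentially formal.
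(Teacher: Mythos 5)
Your outline gets the architecture right, but the step you yourself identify as ``the main content'' is where the proof breaks. Note first that the paper does not prove this statement at all: it is quoted as background from Theorem 4.16 of \cite{SorWen2}, whose proof in turn rests on the noncancellation results of \cite{SorWen1}; so the comparison is with that argument. The positive lower density condition (\ref{eq-positive-density}) that decides which points of the GH limit survive into the intrinsic flat limit is a condition on the mass measure $\|T\|$ of the limit \emph{current}, not on the $n$-dimensional Hausdorff measure of $(X,d)$. Colding's volume convergence plus Bishop--Gromov do give $\liminf_{r\to 0}\mathcal{H}^n(B_p(r))/r^n \ge c(n,V_0,D)>0$ at every $p\in X$, but under flat convergence mass is only lower semicontinuous: cancellation between nearly coincident sheets of opposite orientation can make $\|T\|$ strictly smaller than the weak limit of the volume measures --- in principle $T$ could even be the zero current --- without contradicting volume convergence. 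So your density bound for $\mathcal{H}^n$ does not transfer to $\|T\|$, and the conclusion $Y=X$ does not follow from what you have written.

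Ruling out exactly this cancellation is the real content of the theorem and is the subject of Sormani--Wenger's separate work \cite{SorWen1}: there, using the Cheeger--Colding structure theory in the noncollapsed setting (harmonic almost-splitting charts which are Gromov--Hausdorff approximations, combined with degree and filling-volume arguments), they prove a lower bound of the form $\|T\|(B_p(r)) \ge c(n,V_0,D)\, r^n$, i.e.\ the limit current itself, not merely the Hausdorff measure of the GH limit, has uniformly positive density. With that input, the remaining steps you describe --- Bishop--Gromov mass bounds, Wenger compactness, the embedding of the SWIF limit into the GH limit, and the subsequence/uniqueness upgrade to full convergence --- do go through. As written, however, the part you dismiss as ``essentially formal'' after invoking volume convergence is precisely the missing nontrivial step, so the proposal has a genuine gap there.
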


This theorem is conjectured to hold with uniform lower bounds
on Ricci curvature \cite{SorWen2}. 

\subsection{Review of Smooth Convergence away from Singular Sets.}
Here we review results from our prior work with Sormani in \cite{Lakzian-Sormani}.   
We need the following key definition (a bound on the metric distortions) in order to state the results. Recall the definition of the
extrinsic distance in (\ref{ext-dist}).

\begin{defn} \label{well-embedded}
Given a sequence of Riemannian manifolds
$M_i=(M,g_i)$ and an open subset, $U\subset M$,
a connected precompact exhaustion, $W_j$, of $U$ 
satisfying (\ref{defn-precompact-exhaustion})
is
\emph{uniformly well embedded} if there exist a $\lambda_0$ such that
\be \label{lambda-ijk-00}
\limsup_{j\to \infty} \limsup_{k\to \infty} \limsup_{i\to\infty} \lambda_{i,j,k}
\le \lambda_0,
\ee
and
\be\label{lambda-ijk-2}
\limsup_{k\to\infty} \lambda_{i,j,k}=\lambda_{i,j}
\textrm{ where } \limsup_{i\to \infty} \lambda_{i,j}=\lambda_j
\textrm{ and }\lim_{j\to\infty} \lambda_j=0.
\ee
\end{defn}
where,
\be\label{lambda-ijk}
\lambda_{i,j,k}= \sup_{x,y\in W_j} |d_{(W_{k}, g_i)}(x,y)- d_{(M,g_i)}(x,y).
\ee

The author and Sormani in ~\cite{Lakzian-Sormani} have proven:

\begin{thm}\label{flat-to-settled}
Let $M_i=(M,g_i)$ be a sequence of Riemannian manifolds
such that
there is a closed subset, $S$, and a 
uniformly well embedded connected precompact exhaustion,
$W_j$, of $M\setminus S$ satisfying (\ref{defn-precompact-exhaustion})
such that $g_i$ converge smoothly to $g_\infty$ on each $W_j$
with
\be\label{diam-3}
\diam_{M_i}(W_j) \le D_0 \qquad \forall i\ge j, 
\ee
%\be \label{vol-3}
%\vol(M_i) \le V_0,
%\ee\\\\\
\be \label{area-3}
\vol_{g_i}(\partial W_j) \le A_0,
\ee
and
\be \label{not-vol-3}
\vol_{g_i}(M\setminus W_j) \le V_j \textrm{ where } \lim_{j\to\infty}V_j=0,
\ee
Then
\be
\lim_{j\to \infty} d_{\mathcal{F}}(M_j', N')=0.
\ee
where $N'$ is the settled completion
of $(M\setminus S, g_\infty)$.
\end{thm}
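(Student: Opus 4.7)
\emph{Proof Proposal.} The strategy is to apply \thmref{thm-subdiffeo} with the diffeomorphic subregion taken to be a deep level $W_m$ of the exhaustion, using the truncation $(W_K, g_\infty)'$ as a bridge between $M_i'$ and $N'$. For large $K$, the triangle inequality for the intrinsic flat distance yields
\be
d_\mathcal{F}(M_i', N') \le d_\mathcal{F}\bigl(M_i', (W_K, g_\infty)'\bigr) + d_\mathcal{F}\bigl((W_K, g_\infty)', N'\bigr).
\ee
The second term is controlled by the $g_\infty$-volume of $N \setminus W_K$ via the trivial filling embedding both as current subspaces of $N'$. Smooth convergence $g_i \to g_\infty$ on each $W_L$ combined with (\ref{not-vol-3}) gives $\vol_{g_\infty}(W_L \setminus W_K) \le V_K$ for each $L > K$, and letting $L \to \infty$ yields $\vol_{g_\infty}(N \setminus W_K) \le V_K \to 0$.

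For the first term, I apply \thmref{thm-subdiffeo} with $M_1 = (M, g_i)$, $M_2 = (W_K, g_\infty)$, $U_1 = U_2 = W_m$ for some $m < K$, and $\psi_1, \psi_2$ the identity inclusion. Smooth convergence on the compact set $\bar W_m$ controls the $C^0$-distortion between $g_i$ and $g_\infty$, supplying the $\epsilon$-parameter of conditions (\ref{thm-subdiffeo-1})--(\ref{thm-subdiffeo-2}), which tends to zero as $i \to \infty$. The extrinsic diameters $D_{U_1}, D_{U_2}$ are bounded by $D_0$ using (\ref{diam-3}) and (for $M_2$) the well-embeddedness quantity $\lambda_{i,m,K}$ to compare intrinsic and ambient distances. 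The key distortion $\lambda$ of (\ref{lambda}) decomposes as
\be
\sup_{x,y \in W_m} |d_{(M, g_i)}(x,y) - d_{(W_K, g_\infty)}(x,y)| \le \lambda_{i,m,K} + o_i(1),
\ee
where $\lambda_{i,m,K}$ is controlled by (\ref{lambda-ijk-2}) and the $o_i(1)$ arises from smooth convergence on $\bar W_K$. The boundary and residual volumes appearing in \thmref{thm-subdiffeo} are bounded by $A_0$ and $V_m$ respectively, by hypothesis.

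Putting these ingredients together, for any $\eta > 0$ one chooses $m$ with $\lambda_m < \eta$ and $V_m < \eta$, then $K \gg m$ with $V_K < \eta$, and finally lets $i \to \infty$. All of $a$, $h$, and $\bar h$ in \thmref{thm-subdiffeo} go to zero in this order, and the surviving additive contributions are controlled by $A_0$ and $V_m$. The resulting estimate is $\limsup_i d_\mathcal{F}(M_i', N') \le C(A_0, D_0)\eta$, and sending $\eta \to 0$ finishes the proof. The principal obstacle is the careful bookkeeping of nested limits: the uniform well-embeddedness hypothesis has the triple-$\limsup$ structure $\limsup_m \limsup_k \limsup_i$, and the argument must invoke the hypotheses in exactly that order, first fixing $m$, then $K$, and only then letting $i \to \infty$. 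A secondary subtlety is verifying that $d_\mathcal{F}((W_K, g_\infty)', N') \to 0$ as $K \to \infty$, which is handled by the volume decay argument above.
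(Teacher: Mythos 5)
You have correctly identified the paper's tool: Theorem~\ref{flat-to-settled} is quoted from \cite{Lakzian-Sormani}, and its proof there runs exactly through the estimate of \thmref{thm-subdiffeo} applied on the exhaustion sets, with the volume hypotheses absorbing the complements. However, as organized your argument has two genuine gaps. The more serious one is the order in which you invoke well-embeddedness. Since $W_k\subset W_{k+1}\subset M$, the quantity $\lambda_{i,m,K}$ of (\ref{lambda-ijk}) is nonincreasing in $K$ with limit $\lambda_{i,m}$, so $\lambda_{i,m,K}\ge\lambda_{i,m}$ for every finite $K$. The vanishing part of Definition~\ref{well-embedded}, namely (\ref{lambda-ijk-2}), controls $\lim_{K\to\infty}\lambda_{i,m,K}=\lambda_{i,m}$, then $\limsup_i$, then $m\to\infty$; it says nothing about $\limsup_{i\to\infty}\lambda_{i,m,K}$ at a \emph{fixed} $K$, because the rate of the monotone convergence in $K$ may degenerate as $i\to\infty$. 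Your scheme fixes $m$, then $K$, then lets $i\to\infty$, so the distortion you feed into (\ref{lambda}) is exactly $\limsup_i\lambda_{i,m,K}$ at fixed $K$ --- and the only hypothesis with that nesting is (\ref{lambda-ijk-00}), which is merely bounded by $\lambda_0$, not zero. (Your own remark that the hypothesis ``has the triple-$\limsup$ structure $\limsup_m\limsup_k\limsup_i$ and must be invoked in exactly that order'' points at the wrong clause: that is the bounded clause, not the vanishing one.) As written, the final estimate is therefore $\limsup_i d_{\mathcal{F}}(M_i',N')\lesssim \sqrt{\lambda_0 D_0}\,(\cdot)+C\eta$, which does not tend to zero. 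To repair this you must arrange the comparison so that the quantity entering the distortion is $\lambda_{i,m}$ itself (equivalently $\sup_{W_m}|d_{(M\setminus S,g_i)}-d_{(M,g_i)}|$), e.g.\ by choosing $K=K(i,m)$ diagonally after $i$, or by inserting $(M\setminus S,g_i)$ as an intermediate comparison, so that (\ref{lambda-ijk-2}) rather than (\ref{lambda-ijk-00}) is what is used.

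The second gap is the claim that $d_{\mathcal{F}}\bigl((W_K,g_\infty)',N'\bigr)$ is controlled by $\vol_{g_\infty}(N\setminus W_K)$ ``via the trivial filling, both as current subspaces of $N'$.'' The integral current space $(W_K,g_\infty)'$ carries the intrinsic length metric $d_{(W_K,g_\infty)}$, which in general exceeds the restriction of $d_{(M\setminus S,g_\infty)}$ to $W_K$; the inclusion is $1$-Lipschitz but not distance-preserving, and the intrinsic flat distance requires distance-preserving embeddings into a common space. So you cannot simply restrict the current of $N'$ and quote the flat norm inside $N'$: the comparison of $(W_K,g_\infty)'$ with $N'$ needs its own application of \thmref{thm-subdiffeo} (take $U=W_m$, $\epsilon=0$, distortion $\sup_{x,y\in W_m}|d_{(W_K,g_\infty)}(x,y)-d_{(M\setminus S,g_\infty)}(x,y)|$, which does tend to $0$ as $K\to\infty$ for fixed $m$ by a Dini-type argument, together with the volume terms for $W_K\setminus W_m$, $N\setminus W_m$ and $\partial W_m$), not a trivial restriction of currents. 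Your volume estimate $\vol_{g_\infty}(N\setminus W_K)\le V_K$ from (\ref{not-vol-3}) and smooth convergence is fine and is exactly the argument of Lemma~\ref{lem-vol-vol}; it is the metric, not the mass, that is missing. Indeed the gap you elide here is precisely the phenomenon the well-embeddedness hypothesis exists to control, as the slit torus, Example~\ref{ex-slit-torus}, shows.
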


\begin{rmrk}\label{necessity-well-embeddedness}
	Example \ref{ex-slit-torus} demonstrates the necessity of well-embeddedness condition in Theorem \ref{flat-to-settled}. 
\end{rmrk}

\begin{lem} \label{lem-vol-vol}
Let $M_i=(M,g_i)$ be a sequence of Riemannian manifolds
such that
there is a closed subset, $S$, and a connected precompact exhaustion,
$W_j$, of $M\setminus S$ satisfying (\ref{defn-precompact-exhaustion})
such that $g_i$ converge smoothly to $g_\infty$ on each $W_j$.
If   $\vol_{g_\infty}(M \setminus S) < \infty$ and 
\be \label{lem-edge-volume}
\vol_{g_i}(M\setminus W_j) \le V_j \textrm{ where } \lim_{j\to\infty}V_j=0,
\ee

then there exists a uniform $V_0>0$ such that
\be \label{lem-vol}
\vol_{g_i}(M) < V_0.
\ee
\end{lem}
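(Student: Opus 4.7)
The plan is to split the total volume as
\[
\vol_{g_i}(M) = \vol_{g_i}(W_j) + \vol_{g_i}(M \setminus W_j),
\]
and control each piece separately. The second piece is already bounded by $V_j$ by hypothesis. The first piece will be controlled by combining the smooth convergence on the exhaustion with the finiteness of $\vol_{g_\infty}(M\setminus S)$.

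First I would fix a single large index, say $j_0$, chosen so that $V_{j_0} \le 1$. Since $\bar W_{j_0}$ is a compact subset of $M \setminus S$ (by precompactness of the exhaustion), and since $g_i \to g_\infty$ smoothly on $W_{j_0}$, the associated Riemannian volume forms $\dd\vol_{g_i}$ converge uniformly to $\dd\vol_{g_\infty}$ on $\bar W_{j_0}$. Consequently
\[
\lim_{i\to\infty} \vol_{g_i}(W_{j_0}) = \vol_{g_\infty}(W_{j_0}) \le \vol_{g_\infty}(M\setminus S) < \infty.
\]
In particular there exists $i_0$ such that for all $i \ge i_0$,
\[
\vol_{g_i}(W_{j_0}) \le \vol_{g_\infty}(M\setminus S) + 1.
\]
Combining this with the estimate $\vol_{g_i}(M\setminus W_{j_0}) \le V_{j_0} \le 1$ yields
\[
\vol_{g_i}(M) \le \vol_{g_\infty}(M\setminus S) + 2 \qquad \forall\, i \ge i_0.
\]

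Finally I would handle the finitely many remaining indices $i < i_0$ individually: for each such $i$ the manifold $(M,g_i)$ has finite total volume since $\vol_{g_i}(W_{j_0}) < \infty$ (as $\bar W_{j_0}$ is compact and $g_i$ is smooth there) and $\vol_{g_i}(M\setminus W_{j_0}) \le V_{j_0}$. Setting
\[
V_0 := \max\bigl\{\vol_{g_\infty}(M\setminus S) + 3,\ \max_{i < i_0} \vol_{g_i}(M) + 1\bigr\}
\]
provides the required uniform bound. There is no real obstacle here; the only subtle point worth being careful with is invoking smooth convergence on the \emph{closure} $\bar W_{j_0}$ (rather than just $W_{j_0}$) so that the volume forms converge uniformly and hence the integrals converge — this is legitimate because $\bar W_{j_0} \subset W_{j_0+1}$, on which smooth convergence is assumed.
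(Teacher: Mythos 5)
Your argument is correct: the decomposition $\vol_{g_i}(M)=\vol_{g_i}(W_{j_0})+\vol_{g_i}(M\setminus W_{j_0})$, with the first term controlled for large $i$ by smooth convergence on the compact set $\bar W_{j_0}\subset W_{j_0+1}$ (so $\vol_{g_i}(W_{j_0})\to\vol_{g_\infty}(W_{j_0})\le\vol_{g_\infty}(M\setminus S)<\infty$) and the second term bounded by $V_{j_0}$, together with handling the finitely many remaining indices separately, is exactly the standard proof of this lemma. The paper only states the lemma as a review of prior work (Lakzian--Sormani) without reproducing its proof, and your argument matches that approach, including the appropriate care in invoking convergence on the closure $\bar W_{j_0}$ rather than just on $W_{j_0}$.
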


\begin{prop} \label{prop-improve-sw}
Suppose we have a sequence of manifolds, $M_j=(M, g_j)$
with a uniform lower bound on Ricci curvature and
\be \label{prop-vol-sw}
\vol(M_j) \le V_0,
\ee
converging smoothly
away from a singular set to $(M\setminus S, g_\infty)$.
Suppose also that $(M, g_j)$ converge in the
intrinsic flat sense to $N'$ where $N'$ is the settled
completion of $(M\setminus S, g_\infty)$.  Then
\be
d_{GH}(\bar{M}_j, \bar{N}) \to 0,
\ee
and $\bar{N}=N'$.
\end{prop}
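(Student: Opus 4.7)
The plan is to combine Gromov's compactness theorem with the Sormani--Wenger theorem relating Gromov--Hausdorff and intrinsic flat limits under lower Ricci bounds (Theorem~\ref{thm-sw-Ricci}). The overall strategy is: extract a GH-convergent subsequence of $\bar{M}_j$, show its limit must coincide with $N'$, and then conclude that the full sequence converges in GH to $\bar{N}=N'$.

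First, I would establish a uniform diameter bound $\diam(M_j)\le D_0$. Since $g_j\to g_\infty$ smoothly on each precompact $W_k\subset M\setminus S$, the intrinsic $g_j$-diameter of $W_k$ is eventually bounded by a constant $D_k$ depending only on $k$. The condition $\vol(M_j)\le V_0$, together with smooth convergence on each $W_k$, forces $\vol_{g_j}(M\setminus W_k)$ to be small when $j$ and $k$ are both large. The uniform lower Ricci bound combined with Bishop--Gromov volume comparison then rules out the formation of long thin parts in $M\setminus W_k$: any long geodesic segment lying outside $W_k$ would, by volume comparison, contribute a definite amount of volume to $M\setminus W_k$, contradicting its smallness. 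Combining these observations yields a uniform $D_0$ with $\diam(M_j)\le D_0$.

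With the diameter bound in hand, Gromov's compactness theorem yields a subsequence $M_{j_\ell}$ converging in the GH sense to a compact metric space $Y$. Moreover, $\vol(M_j)\ge \vol_{g_j}(W_1)\to \vol_{g_\infty}(W_1)>0$ for any fixed nonempty $W_1$, so a positive uniform lower volume bound also holds. Invoking Theorem~\ref{thm-sw-Ricci} (in the form valid for lower Ricci bounds in our setting, as noted in the remark following its statement) then tells us that the GH convergence $M_{j_\ell}\to Y$ is accompanied by intrinsic flat convergence of $M_{j_\ell}$ to the integral current space canonically associated to $Y$. By hypothesis $M_j\to N'$ in the IF sense, and uniqueness of IF limits forces $Y=N'$ as integral current spaces; in particular $\bar{N}=N'$ as metric spaces. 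Since every GH-convergent subsequence of $\bar{M}_j$ must have the same limit, the full sequence converges in GH to $\bar{N}$.

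The main obstacle will be the diameter bound, since neither the Ricci lower bound nor the volume upper bound alone controls the diameter (long thin cylinders are the classical counterexample), so the smooth convergence on the exhaustion must be leveraged to rule out such degenerations. A secondary technicality is the invocation of Theorem~\ref{thm-sw-Ricci} in the lower-Ricci rather than nonnegative-Ricci setting, which is addressed by the conjectural extension mentioned in the paper's exposition.
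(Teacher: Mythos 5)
Your first step---deriving a uniform bound on $\diam(M_j)$ from the stated hypotheses---is where the proposal breaks down, and it cannot be repaired. First, the proposition does not assume $\vol_{g_j}(M\setminus W_k)\to 0$: the bound $\vol(M_j)\le V_0$ together with smooth convergence on each $W_k$ only gives $\vol_{g_j}(M\setminus W_k)\le V_0-\vol_{g_\infty}(W_k)+o(1)$, and nothing forces $\vol_{g_\infty}(M\setminus S)$ to exhaust $V_0$; volume may concentrate near $S$, and the hypothesis $M_j\Fto N'$ does not rescue this since mass is only lower semicontinuous under intrinsic flat convergence. Second, with a negative lower Ricci bound, Bishop--Gromov does not force a long thin region to carry a definite amount of volume: for a point $p$ deep inside such a region it only yields $\vol(B(p,1))\ge \vol(M_j)\,V_H(1)/V_H(\diam(M_j))$, a bound that degenerates exactly when the diameter blows up, so the argument is circular. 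In fact the paper's own Example~\ref{ex-neg-Ric-bound} exhibits metrics on $S^3$ with a uniform (negative) lower Ricci bound, uniformly bounded volume, smooth convergence away from a point, and intrinsic flat limit equal to the settled completion, while $\diam(M_j)\to\infty$ and no Gromov--Hausdorff limit exists. So no argument can extract the diameter bound from the hypotheses as listed; in this paper that bound is supplied externally as a hypothesis of Theorems~\ref{Ricci-thm-improved} and~\ref{Ricci-diam-thm}, where the proposition is invoked, and Lemma~\ref{lem-Ricci-diam} shows it comes for free only when $\Ricci\ge 0$.

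The second gap is your appeal to Theorem~\ref{thm-sw-Ricci} ``in the form valid for lower Ricci bounds'': the paper states explicitly that this extension is only a conjecture, so invoking it begs the question---the point of Proposition~\ref{prop-improve-sw} (proved in \cite{Lakzian-Sormani} and only quoted here) is precisely to get agreement of the GH and SWIF limits in this special situation without that conjecture. The structure one must exploit is the smooth region: once a diameter bound is available, Bishop--Gromov combined with $\vol(M_j)\ge\vol_{g_j}(W_1)\to\vol_{g_\infty}(W_1)>0$ gives uniform noncollapsing, $\vol_{g_j}(B(p,r))\ge c(r)>0$ at \emph{every} point of $M_j$; one then takes a GH-convergent subsequence via Gromov compactness, uses the Sormani--Wenger embedding results to place the flat limit isometrically inside the GH limit, and uses the noncollapsing (via filling-volume/no-disappearing-points estimates) to exclude extra limit points, so the subsequential GH limit is exactly $\bar N=N'$. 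Your final step---uniqueness of the subsequential limit upgrading to convergence of the full sequence---is fine, but it is essentially the only part of the plan that survives as written.
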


\begin{lem} \label{lem-Ricci-diam}
Suppose we have a sequence of manifolds, $M_j=(M, g_j)$
with nonnegative Ricci curvature and
\be \label{prop-vol-sw}
\vol(M_j) \le V_0,
\ee
converging smoothly
away from a singular set to $(M\setminus S, g_\infty)$
then
\be\label{diam-2}
\diam_{M_i}(W_j) \le \diam(M_i) \le D_0 \qquad \forall i\ge j.
\ee
\end{lem}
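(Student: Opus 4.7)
The plan is to prove the diameter bound by contradiction, combining Bishop--Gromov volume comparison (enabled by the Ricci hypothesis) with the uniform lower volume bound on $W_j$ produced by smooth convergence, together with the uniform upper volume bound on $M\setminus W_j$ inherited from the ambient hypothesis $\vol_{g_i}(M\setminus W_j)\le V_j$ with $V_j\to 0$ (as in Theorem~\ref{Ricci-thm-improved}).

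The inequality $\diam_{M_i}(W_j)\le \diam(M_i)$ is immediate from the definition of extrinsic diameter, so the content is the uniform bound $\diam(M_i)\le D_0$. First I would extract two uniform ingredients from smooth convergence on the precompact set $\overline{W_{j_0}}$ for a conveniently chosen $j_0$: (i) since $g_i\to g_\infty$ in $C^{k,\alpha}$ on $\overline{W_1}$, volumes converge, so
\[
\vol_{g_i}(M)\;\ge\;\vol_{g_i}(W_1)\;\ge\; v:=\tfrac12\vol_{g_\infty}(W_1)\;>\;0
\]
for all sufficiently large $i$; (ii) for each fixed $j_0$, intrinsic diameters on $W_{j_0}$ converge as well, so $\diam_{(W_{j_0},g_i)}(W_{j_0})\le C_{j_0}:=2\diam_{(W_{j_0},g_\infty)}(W_{j_0})$ for $i$ large.

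Now suppose, for contradiction, that $D_i:=\diam(M_i)\to\infty$ along a subsequence. I would choose $j_0$ large enough that $V_{j_0}<v/(2\cdot 8^n)$; with $j_0$ so fixed, $C_{j_0}$ is some finite constant. Fix a reference $p_0\in W_1\subset W_{j_0}$, and using compactness of $M$ pick $p_i\in M$ with $d_{g_i}(p_i,p_0)\ge D_i/2$. For any $q\in W_{j_0}$, the triangle inequality together with $d_{g_i}(q,p_0)\le C_{j_0}$ yields $d_{g_i}(p_i,q)\ge D_i/2-C_{j_0}$, so
\[
r_i:=d_{g_i}(p_i,W_{j_0})\;\ge\; D_i/2-C_{j_0}\;\ge\; D_i/4,
\]
once $i$ is large enough that $D_i\ge 4C_{j_0}$. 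The open ball $B_{p_i}(r_i,g_i)$ is then contained in $M\setminus W_{j_0}$, so $\vol_{g_i}(B_{p_i}(r_i))\le V_{j_0}$. On the other hand, Bishop--Gromov volume comparison for $\Ric\ge 0$ applied with $R_i:=2D_i>r_i$ (noting that $B_{p_i}(R_i)=M$) yields
\[
\vol_{g_i}(B_{p_i}(r_i))\;\ge\;\left(\frac{r_i}{R_i}\right)^{n}\vol_{g_i}(M)\;\ge\;\left(\frac{1}{8}\right)^{n}v,
\]
which contradicts the choice $V_{j_0}<v/(2\cdot 8^n)$.

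The main obstacle I anticipate is purely bookkeeping: making sure the unstated ambient hypothesis $\vol_{g_i}(M\setminus W_j)\le V_j\to 0$ is in force (it must be, otherwise a long, thin cylinder attached to a sphere would be an easy counterexample), and carefully ordering the quantifiers so that $j_0$ is fixed once-and-for-all before the $i\to\infty$ limit is invoked. Everything else reduces to standard Bishop--Gromov comparison and elementary triangle-inequality arguments, and the positivity of $\vol_{g_\infty}(W_1)$ used in (i) is automatic because $g_\infty$ is a genuine Riemannian metric on the nonempty open set $W_1\subset M\setminus S$.
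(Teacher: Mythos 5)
Your Bishop--Gromov contradiction is internally sound, but it does not prove Lemma~\ref{lem-Ricci-diam} as stated: at the crucial step you choose $j_0$ with $V_{j_0}<v/(2\cdot 8^n)$ and use $\vol_{g_i}(B_{p_i}(r_i))\le \vol_{g_i}(M\setminus W_{j_0})\le V_{j_0}$, i.e.\ you import the hypothesis $\vol_{g_i}(M\setminus W_j)\le V_j\to 0$, which is \emph{not} among the hypotheses of the lemma (only $\Ricci\ge 0$, $\vol(M_j)\le V_0$ and smooth convergence away from $S$ are assumed). Your justification for adding it --- that otherwise ``a long, thin cylinder attached to a sphere'' would be a counterexample --- is incorrect: such a manifold cannot carry a metric with nonnegative Ricci curvature, and the reason why is exactly the missing ingredient. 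With $\Ricci\ge 0$, fix $p_0\in W_1$ with $\vol_{g_i}(B_{p_0}(1))\ge v>0$ (your step (i)); if $D_i=\diam(M_i)\to\infty$, pick $q_i$ with $R_i:=d_{g_i}(q_i,p_0)\ge D_i/2$. Then $B_{p_0}(1)$ lies in the annulus $A_{q_i}(R_i-1,R_i+1)$, and the annulus form of Bishop--Gromov gives $\vol_{g_i}\big(A_{q_i}(t-1,t+1)\big)\ge c_n\, v\, t^{n-1}/R_i^{\,n-1}$ for $2\le t\le R_i$; summing over the disjoint annuli $t=2,4,6,\dots$ yields $\vol(M_i)\ge c_n'\, v\, R_i\to\infty$ (Yau's linear volume growth argument), contradicting $\vol(M_i)\le V_0$ alone. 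Your weaker comparison $\vol(B_{p_i}(r_i))\ge (r_i/R_i)^n\vol(M_i)$ only shows the far ball has a definite amount of volume, which is precisely why you were forced to assume that $M\setminus W_{j_0}$ has little volume. Note also that this paper states the lemma as a review of \cite{Lakzian-Sormani} and gives no in-text proof, so there is nothing to compare against here; but the statement is true without your extra assumption, and your argument, while valid in the setting of Theorems~\ref{Ricci-thm-improved} and~\ref{Ricci-diam-thm} where $\vol_{g_i}(M\setminus W_j)\le V_j$ is indeed assumed, proves a strictly weaker lemma.

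A secondary, fixable point: you set $C_{j_0}=2\,\diam_{(W_{j_0},g_\infty)}(W_{j_0})$, implicitly assuming the intrinsic $g_\infty$-diameter of $W_{j_0}$ is finite. For an arbitrary connected precompact exhaustion this is not automatic (a precompact open set can have infinite intrinsic diameter). It is cleaner to use that $\bar W_{j_0}$ is a compact connected subset of $W_{j_0+1}$, where $d_{(W_{j_0+1},g_\infty)}$ is finite and continuous, hence bounded on $\bar W_{j_0}\times \bar W_{j_0}$, and that $g_i\to g_\infty$ uniformly on $\bar W_{j_0+1}$, so $d_{(M,g_i)}(q,p_0)\le d_{(W_{j_0+1},g_i)}(q,p_0)$ is uniformly bounded for large $i$; the rest of your triangle-inequality bookkeeping then goes through unchanged.
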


\begin{thm}\label{c-smooth-to-GH}
Let $M_i=(M,g_i)$ be a sequence of compact oriented Riemannian manifolds
with a uniform linear contractibility
function, $\rho$, which converges smoothly away from
a singular set, $S$.  If there is a uniformly well embedded
connected precompact exhaustion of
$M\setminus S$  as in (\ref{defn-precompact-exhaustion})
satisfying the volume conditions (\ref{area-4}) and (\ref{not-vol-4})
then
\be
\lim_{j\to \infty} d_{GH}(M_j, N)=0,
\ee
where $N$ is the settled and
metric completion of $(M\setminus S, g_\infty)$.
\end{thm}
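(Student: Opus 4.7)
The plan is to combine three results from the excerpt: Theorem \ref{flat-to-settled} for intrinsic flat convergence, the Greene--Petersen compactness theorem for GH precompactness under uniform contractibility plus volume bound, and Theorem \ref{thm-sw-contractible} of Sormani--Wenger, which equates GH and intrinsic flat limits once uniform linear contractibility holds. First, I would invoke Lemma \ref{lem-vol-vol} to extract a uniform volume bound $\vol_{g_i}(M) \le V_0$ from the edge-volume hypothesis together with the finiteness of $\vol_{g_\infty}(M\setminus S)$, the latter being controlled by the smooth convergence on each $W_j$ and the uniform area bound on $\partial W_j$. With this volume bound and the uniform linear contractibility $\rho$, the Greene--Petersen theorem yields GH precompactness of $\{M_j\}$; in particular the diameters are uniformly bounded, $\diam(M_j) \le D_0$, since any GH precompact family is totally bounded in the GH metric.

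The uniform extrinsic bound $\diam_{M_i}(W_j) \le \diam(M_i) \le D_0$, together with the well-embeddedness and the volume controls, now places us exactly within the hypotheses of Theorem \ref{flat-to-settled}, so I obtain $M_j' \to N'$ in the intrinsic flat sense, where $N'$ is the settled completion of $(M\setminus S, g_\infty)$. To promote this to Gromov--Hausdorff convergence, I would fix an arbitrary GH-convergent subsequence $M_{j_k} \to X$ furnished by the previous step. Since the $M_{j_k}$ still satisfy uniform linear contractibility and uniform volume bound, Theorem \ref{thm-sw-contractible} asserts that the same subsequence also converges intrinsic-flat to an integral current space $(X, d_X, T_X)$. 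Uniqueness of intrinsic flat limits then forces $(X, d_X, T_X) = N'$, so $X$ is isometric, as a metric space, to the underlying space of $N'$. Because every GH-convergent subsequence has the same limit $N'$, the full sequence converges, $M_j \xrightarrow{GH} N'$.

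The last step, and the main obstacle, is to identify $N'$ with the metric completion $N$ of $(M\setminus S, g_\infty)$. The inclusion $N' \subseteq \bar{N}$ is immediate from the definition of the settled completion. For the reverse inclusion I would argue that the GH limit $N'$ inherits the uniform linear contractibility function from the $M_j$, which is a standard closure property under GH convergence of manifolds with a uniform contractibility function. Linear contractibility at every point then yields a uniform lower volume bound of the form $\mathcal{H}^n(B_p(r)) \ge c\, r^n$ on small metric balls in $N'$. Consequently, every Cauchy sequence in $(M\setminus S, g_\infty)$ representing a point of the metric completion $\bar{N}$ must have strictly positive lower $n$-density in the sense of (\ref{eq-positive-density}) and therefore lies in the settled completion. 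This gives $\bar{N} = N'$, completing the identification and hence the proof.
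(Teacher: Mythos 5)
Your outline coincides with the paper's own proof up to the last paragraph: Lemma~\ref{lem-vol-vol} plus Greene--Petersen supply the uniform volume and diameter bounds, Theorem~\ref{flat-to-settled} gives SWIF convergence to $N'$, and Theorem~\ref{thm-sw-contractible} (together with your subsequence-plus-uniqueness argument, a useful expansion of what the paper leaves implicit) identifies the GH limit with $N'$. The genuine gap is in your final step, where you claim that linear contractibility of the limit space yields a uniform bound $\mathcal{H}^n(B_p(r))\ge c\,r^n$ on small balls of $N'$, and hence positive lower density at every point of $\bar{N}$. A contractibility function does not imply a Hausdorff-measure lower bound for a general metric space (a segment or a metric tree is linearly contractible and has vanishing $\mathcal{H}^n$ for $n\ge 2$), and the Greene--Petersen volume estimate that does produce such a bound is a theorem about Riemannian $n$-manifolds; neither $N'$ nor $\bar{N}$ is a manifold, so it cannot simply be quoted for the limit, even granting the (nontrivial, but citable) inheritance of a contractibility function under GH convergence. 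Moreover the density condition (\ref{eq-positive-density}) must be verified for balls of $\bar{N}$ centered at points of $\bar{N}\setminus N'$, which a ball estimate on $N'$ does not directly address; this is exactly where settled and metric completions can genuinely differ, i.e.\ it is the substantive content of the conclusion and cannot be dispatched by a closure-property remark.

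Two ways to close the gap. First, apply Greene--Petersen's ball-volume inequality on the manifolds $M_i$ themselves, $\vol_{g_i}\bigl(B^{M_i}_q(r)\bigr)\ge c\,r^n$ for $r\le r_0$, and transfer it to the limit: by the uniform well-embeddedness the distances $d_{M_i}$ converge to $d_{(M\setminus S,\,g_\infty)}$ on each $W_j$, so $B^{M_i}_q(r-\epsilon)\cap W_k\subset B^{\bar{N}}_q(r)$ for $i$ large, while (\ref{not-vol-4}) controls the part of the $M_i$-ball outside $W_k$; this yields a uniform bound $\vol_{g_\infty}\bigl(B^{\bar{N}}_q(r)\bigr)\ge c'\,r^n$ at all $q\in M\setminus S$, and then your reduction $B^{\bar{N}}_p(r)\supset B^{\bar{N}}_q(r/2)$ gives positive density at every completion point. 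Second, and closer to what the paper intends: Theorem~\ref{thm-sw-contractible} already asserts that the GH limit $X$ and the SWIF limit share the same underlying space, so $X=N'$ as metric spaces; separately, well-embeddedness gives $\lim_i d_{M_i}(x,y)=d_{(M\setminus S,\,g_\infty)}(x,y)$ on each $W_j$, which produces a distance-preserving map of $M\setminus S$ into the compact space $X$, extending to an isometric embedding $\bar{N}\hookrightarrow X=N'\subset\bar{N}$ and forcing $\bar{N}=N'$. With either repair the rest of your write-up stands.
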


\begin{proof}
	By Lemma~\ref{lem-vol-vol}, we have
\be \label{vol-4}
\vol(M_i) \le V_0.
\ee
This combined with the uniform contractibility function
allows us to apply the Greene-Petersen Compactness Theorem.
In particular we have a uniform upper bound on diameter:
\be\label{diam-4}
\diam(M_i) \le D_0,
\ee
We may now apply Theorem~\ref{flat-to-settled} to obtain
\be
    \lim_{j\to \infty} d_\mathcal{F}(M_j, N') =0
\ee
We then apply Theorem~\ref{thm-sw-contractible}
to see that the flat limit and Gromov-Hausdorff
limits agree due to the existence of the uniform
linear contractibility function
and the fact that the volume
is bounded below uniformly by the smooth limit.
In particular the metric completion and the
settled completion agree.

\end{proof}

%=================================================================%
%======================= SECTION: MAIN RESULT  1 ===================%
%=================================================================%

\section{Hausdorff Measure Estimates $\Longrightarrow$ Well-Embeddedness.}\label{main-result-I}
We now prove Theorems \ref{codim-thm}, \ref{Ricci-thm-improved} and its counterpart (with Ricci condition replaced by contractibility condition) stated in the introduction. First we must prove the following two lemmas:

\begin{lem}\label{codim-lem}
 Let $M^n$ be compact Riemannian manifold, $S$ a subset of $M$ with $H^{n-1}(S) = 0 $, and let $\gamma: [0,L] \to M$ be a shortest geodesic parametrized by arclength with endpoints $x , y \in  M \setminus S$. Then, for any small enough $\epsilon>0$, there exists a path $\gamma_\epsilon$ joining $x , y$ such that $\gamma_\epsilon \cap S = \emptyset$ and 
 \be
 	L(\gamma_\epsilon) \le L(\gamma)+\epsilon.
 \ee
\end{lem}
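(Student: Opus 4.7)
The idea is to perturb $\gamma$ slightly within a tubular neighborhood so that the perturbed curve avoids $S$; the hypothesis $H^{n-1}(S)=0$ ensures that $S$ is ``thin'' enough to be avoidable by generic codimension-one perturbations.

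First, since $\gamma$ is a shortest geodesic between distinct points $x$ and $y$, it is embedded on $(0,L)$. I would extend $\gamma$ slightly to a geodesic $\tilde\gamma:[-\eta_0,L+\eta_0]\to M$ and construct a Fermi-type parametrization of a tubular neighborhood. Letting $e_1(t),\ldots,e_{n-1}(t)$ be a parallel orthonormal frame of the normal bundle along $\tilde\gamma$, define
\[
\Phi(t,v)\;=\;\exp_{\tilde\gamma(t)}\!\Bigl(\sum_{i=1}^{n-1}v^{i}e_{i}(t)\Bigr)
\]
on $[-\eta_0,L+\eta_0]\times B^{n-1}(\delta_0)$. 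For $\eta_0,\delta_0$ sufficiently small, $\Phi$ is a smooth diffeomorphism onto its image and hence locally bi-Lipschitz.

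Next I would fix a smooth bump $\phi:[0,L]\to[0,1]$ with $\phi(0)=\phi(L)=0$ and $\phi(t)>0$ on $(0,L)$ (for instance $\phi(t)=\sin(\pi t/L)$), and for each small $v\in B^{n-1}(\delta)$ consider the perturbed curve
\[
\gamma_v(t)\;=\;\Phi(t,\phi(t)v),\qquad t\in[0,L].
\]
The vanishing of $\phi$ at the endpoints guarantees $\gamma_v(0)=x$ and $\gamma_v(L)=y$.

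The main step is to show that for almost every $v\in B^{n-1}(\delta)$ the curve $\gamma_v$ misses $S$. Setting $F(t,v)=\Phi(t,\phi(t)v)$, one checks that $F$ is a local diffeomorphism on $(0,L)\times B^{n-1}(\delta)$ for $\delta$ small (its Jacobian is nondegenerate because $\phi>0$ there). Being locally bi-Lipschitz on this $n$-dimensional domain, $F^{-1}(S)$ inherits $H^{n-1}$-measure zero. The projection $\pi_2(t,v)=v$ is $1$-Lipschitz, so $\pi_2(F^{-1}(S))$ has $H^{n-1}$-measure zero in $B^{n-1}(\delta)$, hence Lebesgue measure zero. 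For any $v$ outside this null set, $\gamma_v(t)\notin S$ for every $t\in(0,L)$, while the endpoints $x,y$ are in $M\setminus S$ by hypothesis. This is exactly where the codimension-one assumption $H^{n-1}(S)=0$ enters, and it is the main technical point of the argument.

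Finally, the length estimate is a routine Fermi-coordinate expansion: $g_{tt}=1+O(|v|^{2})$, $g_{ti}=O(|v|^{2})$, $g_{ij}=\delta_{ij}+O(|v|^{2})$, which yields $|\gamma_v'(t)|=1+O(|v|^{2})$ uniformly in $t$, so that $L(\gamma_v)\le L+C|v|^{2}$ for a constant $C$ depending on $\phi$ and the geometry of $M$ near $\gamma$. Choosing $|v|$ sufficiently small and outside the null set above produces a curve $\gamma_\epsilon:=\gamma_v$ with $\gamma_\epsilon\cap S=\emptyset$ and $L(\gamma_\epsilon)\le L+\epsilon$, proving the lemma.
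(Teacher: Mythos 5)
Your proposal is correct and follows essentially the same route as the paper: both construct the $(n-1)$-parameter normal variation $\exp_{\gamma(t)}\bigl(\sin(\pi t/L)\sum_i v^i e_i(t)\bigr)$, show it is (locally) bi-Lipschitz away from the endpoint degeneracy so that the preimage of $S$ has $H^{n-1}$-measure zero, project by a $1$-Lipschitz map onto the parameter space to get a null set of bad parameters, and pick a small generic $v$ to obtain an $S$-avoiding curve of length at most $L+\epsilon$. The only difference is cosmetic: you factor the variation through the fixed tubular-neighborhood diffeomorphism $\Phi$ and the shear $(t,v)\mapsto(t,\phi(t)v)$, which replaces the paper's Jacobi-field computation of $D\Gamma$ with an immediate local-diffeomorphism argument.
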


\begin{proof}
Let $\Gamma : [-\sigma , \sigma]^{n-1} \times [0,L] \subset \R^n \to M $ be the $(n-1)-$th variation of $\gamma$  given by
\begin{eqnarray}
	\Gamma(t_1, t_2 , .... t_{n-1} , s) &=& \operatorname{exp}_{\gamma(s)} \mathbf{F}(t_1, t_2 , .... t_{n-1} , s)  %\notag \\ &=& \operatorname{exp}_{\gamma(s)} \left( \sin(\frac{\pi s}{L}) \Big( \sum_i t_i e_i (s) + \sum_i t_i^2 e_0(s) \Big) \right),
\end{eqnarray}
where
\be
	\mathbf{F}(t_1, t_2 , .... t_{n-1} , s) = \sin \left(\frac{\pi s}{L} \right) \Big( \sum_i t_i e_i (s) \Big),
\ee
in which $\{e_i(s)\}$ is a parallel orthonormal frame along $\gamma$ and $e_0(s)$ is the unit tangent to $\gamma$. 

For any $\bar{t}=(t_1,..t_{n-1})$, the curve $\gamma_{\bar{t}}(s) := \Gamma(t_1, t_2 , .... t_{n-1} , s)$ is a curve from $x$ to $y$. If we choose $\sigma$ sufficiently small then,
\be
	L(\gamma) \le L(\gamma_{\bar{t}}) \le L (\gamma) + \epsilon,
\ee
therefore, to prove the lemma, we need to find a $\bar{t}$ such that 
\be
	\gamma_{\bar{t}} \cap S= \emptyset.
\ee	

\noindent \textbf{Claim:} There exists some $\sigma>0$ such that after restricting the domain of $\Gamma$ accordingly, for any small $ \delta > 0$, $\Gamma$ is bi-Lipschitz on 
\be
	\Lambda_\delta =  [-\sigma , \sigma]^{n-1} \times [\delta , L - \delta].
\ee

To see this, we need to compute the derivative of $\Gamma$. Let
\be
	x(u,s,t_1, \dots , t_{n-1}) = \operatorname{exp}_{\gamma(s)} \Big(u \mathbf{F}(s,t_1 , \dots , t_{n-1})  \Big),
\ee
for fixed $s , t_1 , \dots , t_{n-1}$, as $u$ ranges from $0$ to $1$, the curve $x(u,s,t_1, \dots , t_{n-1})$ is a geodesic segment from $\gamma(s)$ to $\Gamma(s , t_1 , \dots , t_{n-1})$. As $s$ varies, $x$ is a variation through geodesics therefore, 
\begin{eqnarray}
	D (\Gamma)  \left( \frac{\partial}{\partial s} \right) (s, t_1 , \dots , t_{n-1})= \mathbf{J}(1),
\end{eqnarray}
where, $\mathbf{J}$ is the Jacobi field along this geodesic segment, with the initial conditions $\mathbf{J}(0) = e_0(s)$ , $\nabla_{\frac{\partial x}{\partial u}} \mathbf{J}(0) = \frac{\pi}{L} \cos \left(\frac{\pi s}{L} \right)  \sum_i t_i e_i (s) $ since,

\begin{eqnarray}
	\nabla_{\frac{\partial x}{\partial u}} \mathbf{J}(0) &=& \nabla_{\frac{\partial x}{\partial u}} \frac{\partial x}{\partial s}(0,s,t_1, \dots , t_{n-1}) \notag \\ &=& \nabla_{\frac{\partial x}{\partial s}} \frac{\partial x}{\partial u}(0,s,t_1, \dots , t_{n-1}) \\ &=& \nabla_{\frac{\partial x}{\partial s}} \mathbf{F}(0,s,t_1 , \dots , t_{n-1}) \notag \\ &=& \frac{\pi}{L} \cos \left(\frac{\pi s}{L} \right) \sum_i t_i e_i (s) . \notag
\end{eqnarray}

Also for any $1 \le i \le n-1$ we have:
\begin{eqnarray}
	D (\Gamma)  \left( \frac{\partial}{\partial t_i} \right) (s, t_1 , \dots , t_{n-1}) = \left( \operatorname{exp}_{\gamma(s)} \right)_*  \big|_{\mathbf{F}(s,t_1 , \dots , t_{n-1})} \sin \left(\frac{\pi s}{L} \right) e_i(s).
\end{eqnarray}

For $t_1 = \dots , t_{n-1} = 0$, and $V = \alpha_0 \frac{\partial}{\partial s} + \sum_i  \alpha_i \frac{\partial}{\partial t_i}$ with $||V|| =1 $ we compute:
\begin{eqnarray}
	||D(\Gamma) V || &=& \left\|D(\Gamma) \Big( \alpha_0 \frac{\partial}{\partial s} + \sum_i  \alpha_i \frac{\partial}{\partial t_i} \Big) \right\|  \notag  \\ &=&  \left\| \alpha_0 e_0(s) + \sin \left(\frac{\pi s}{L} \right) \sum_i  \alpha_i e_i(s) \right\|,
\end{eqnarray}

Therefore, for any $\delta > 0$ , there exist $c(\delta)> 0$ such that on $\gamma$,
\be
	 0 < c(\delta) \le || D(\Gamma)V || \le 1.
\ee

By continuity, for a small enough $\sigma$, we will have
\be
	 0 < \frac{c(\delta)}{2} \le || D(\Gamma)V || \le 3/2,
\ee
on $[-\sigma , \sigma]^{n-1} \times [\delta , L -\delta]$. 

Also by making $\sigma$ smaller, we can assume that 
\be
	\sigma < \frac{ r_{focal}}{n},
\ee
in which, $r_{focal}$ is the focal radius of the geodesic $\gamma$. This guarantees that $\Gamma$ is injective on $[-\sigma , \sigma]^{n-1} \times [\delta , L - \delta]$ which is compact, therefore $\Gamma$ is a homeomorphism with the derivative bounded away from zero on its domain. By applying the inverse function theorem, we deduce that $\Gamma$ is a diffeomorphism on $[-\sigma , \sigma]^{n-1} \times [\delta , L - \delta]$ onto its image. As a result, $\Gamma$ is bi-Lipschitz on $\Lambda_{\delta}$. 

It is rather straightforward to see that for a Lipschitz function $f : \R^n \to \R^m$, any subset $A \subset \R^n$ and $0 \le s < \infty$, we have
\be\label{lip-hausdorff}
	H^s \left( f(A) \right) \le [\Lip f]^s H^s(A)
\ee
(see \cite[Theorem 3.1.2]{Lin-Yang-GMT-text}). Therefore, bi-Lipschitz preimages of sets of $0$ Hausdorff measure, have $0$ Hausdorff measure. Since $\Gamma$ is bi-Lipschitz on $\Lambda_{\delta}$, we get:
\be\label{bi-lip-1}
	H^{n-1} \left( \Gamma^{-1}(S) \cap \Lambda_\delta \right) = 0.
\ee

Now we can compute:
\begin{eqnarray}
	H^{n-1} \left(  \Gamma^{-1}(S) \right) &\le& H^{n-1} \left(  \bigcup_i \left(\Gamma^{-1}(S) \cap \Lambda_{1/i} \right) \right) \notag \\ &&+ H^{n-1} \left(  \Gamma^{-1}(S) \cap [-\sigma , \sigma]^{n-1} \times \{0\}   \right)\notag \\ &&+ H^{n-1} \left(  \Gamma^{-1}(S) \cap [-\sigma , \sigma]^{n-1} \times \{L\}   \right)\notag \\ && \;=0. \notag
\end{eqnarray}

Since any orthogonal projection $\operatorname{Pr}: \R^n \to \R^k$ is distance dicreasing, we have $\Lip \left(\operatorname{Pr} \right) \le 1$. By (\ref{lip-hausdorff}) (see \cite[Theorem 3.1.2]{Lin-Yang-GMT-text}), for any $A \subset \R^n$ and any $0 \le s < \infty$, we get
\be
	H^s \left( \operatorname{Pr}(A) \right) \le H^s(A)
\ee 

Thus for any orthogonal projection $\operatorname{Pr}$ onto an $(n-1)-$dimensional face of $[-\sigma , \sigma]^{n-1} \times [0,L]$ we have
\be
	H^{n-1} \left(\operatorname{Pr}\left( \Gamma^{-1}(S) \right) \right) = 0.
\ee

Let $\operatorname{Pr_1}$ and $\operatorname{Pr_2}$ be the projections onto the faces $[- \sigma , \sigma]^{n-1} \times \{0\}$ and $[- \sigma , \sigma]^{n-1} \times \{L\}$ respectively. Setting $E_1 = \operatorname{Pr_1}\left(\Gamma^{-1}(S) \right)$ and $E_2 = \operatorname{Pr_2}\left(\Gamma^{-1}(S) \right) $, then,
\be
	H^{n-1} ( E_i) = 0  \;\; for\;\; i=1,2
\ee
and so,
\be
	H^{n-1} ([-\sigma , \sigma]^{n-1} \setminus E_i) = (2 \sigma)^{n-1} \;\;for\;\; i=1,2
\ee	

Any countable union of null sets is a null set (see \cite[p.~ 26]{Folland}) hence, $E_1 \cup E_2$ is a null set. This means that
\be
	H^{n-1} \left( [-\sigma , \sigma]^{n-1} \setminus E_1 \cup [-\sigma , \sigma]^{n-1} \setminus E_2 \right) = (2 \sigma)^{n-1}.
\ee

Let $ \bar{t} = (t_1 , t_2 , ...., t_{n-1}) \in (([-\sigma , \sigma]^{n-1} \setminus E_0)   \cap ([-\sigma , \sigma]^{n-1} \setminus E_L)$, then the path 
\be
	\gamma_{\bar{t}} (s) = \Gamma (t_1 , t_2 , t_3 ,... , s ),
\ee
is a path joining $x,y$ and $\gamma_{\bar{t}} \cap S = \emptyset$ which also satisfies
\be
	| L (\gamma_\epsilon) - L(\gamma)  | \le \epsilon.
\ee
\end{proof}

\begin{lem}\label{codim-lambda-lem}
Let $M^n$ be a compact Riemannian manifold, $S$ a set with $H^{n-1}(S) = 0$ and $\diam_{g_\infty}(M\setminus S)<\infty$ then, 
any connected precompact exhaustion, $W_j$, 
of $M^n \setminus S$ is uniformly well embedded.
\end{lem}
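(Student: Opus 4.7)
The plan is to reduce the verification of uniform well-embeddedness to the single assertion
\[
\lim_{k\to\infty}\lambda_{i,j,k}=0 \qquad \text{for each fixed } i \text{ and } j.
\]
Once this is established, condition (\ref{lambda-ijk-2}) is immediate, since $\lambda_{i,j}=\limsup_k \lambda_{i,j,k}=0$ for every pair $i,j$, whence $\lambda_j=\limsup_i \lambda_{i,j}=0$ and $\lim_j \lambda_j=0$. Condition (\ref{lambda-ijk-00}) is easier still: from the trivial bound $\lambda_{i,j,k}\le \sup_{x,y\in \bar W_j} d_{(W_k,g_i)}(x,y)$, the smooth convergence $g_i\to g_\infty$ on each $W_k$ yields $\limsup_i \lambda_{i,j,k}\le \diam_{g_\infty}(W_k) \le \diam_{g_\infty}(M\setminus S)$, so one may take $\lambda_0=\diam_{g_\infty}(M\setminus S)$, which is finite by hypothesis.

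To prove $\lim_{k\to\infty}\lambda_{i,j,k}=0$ for fixed $i$ and $j$, I would begin with three structural observations. First, the closure $\bar W_j$ is compact in $M$ and, by the exhaustion property (\ref{defn-precompact-exhaustion}), $\bar W_j\subset W_{j+1}\subset W_k$ for every $k\ge j+1$. Second, each restricted distance function $d_{(W_k,g_i)}:\bar W_j\times \bar W_j\to \R$ is continuous, because the compact inclusion $\bar W_j\subset W_{j+1}$ produces a uniform local Lipschitz constant via normal coordinates that lie inside $W_{j+1}\subset W_k$. Third, the sequence $\{d_{(W_k,g_i)}\}_{k\ge j+1}$ is non-increasing in $k$, as enlarging the subregion admits more admissible paths from $x$ to $y$.

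The main step is to identify the pointwise limit. The inequality $d_{(M,g_i)}\le d_{(W_k,g_i)}$ is trivial. Conversely, given $\varepsilon>0$ and $x,y\in \bar W_j$, the compactness of $M$ provides a shortest $g_i$-geodesic $\gamma$ from $x$ to $y$; applying Lemma \ref{codim-lem} to the metric $g_i$ produces a curve $\gamma_\varepsilon\subset M\setminus S$ from $x$ to $y$ with $L_{g_i}(\gamma_\varepsilon)\le d_{(M,g_i)}(x,y)+\varepsilon$. The compactness of $\gamma_\varepsilon$ in $M\setminus S=\bigcup_k W_k$ forces $\gamma_\varepsilon\subset W_k$ for all sufficiently large $k$, whence $\lim_k d_{(W_k,g_i)}(x,y)= d_{(M,g_i)}(x,y)$. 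Since $d_{(M,g_i)}$ is continuous on the compact space $\bar W_j\times \bar W_j$, Dini's theorem upgrades this pointwise monotone convergence to uniform convergence in $(x,y)$, yielding $\lambda_{i,j,k}\to 0$ as $k\to\infty$.

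The main subtlety to watch for is the continuity of the intrinsic distance $d_{(W_k,g_i)}$ restricted to $\bar W_j\times \bar W_j$ — not automatic in general for intrinsic distances in open manifolds, since approximating paths might drift toward the boundary — but the compact nesting $\bar W_j\subset W_{j+1}\subset W_k$ circumvents this by confining short local paths to a fixed neighborhood well inside $W_k$. No other essential obstacles are anticipated in carrying out the argument.
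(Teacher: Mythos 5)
Your treatment of condition (\ref{lambda-ijk-2}) is correct and is essentially the paper's argument recast in a cleaner form: the paper also applies Lemma \ref{codim-lem} to $g_i$ to produce a curve avoiding $S$, pushes it into some $W_k$ by compactness, and concludes that $\limsup_k\lambda_{i,j,k}=0$ for fixed $i,j$, though it does so by contradiction at sup-achieving points rather than via monotone pointwise convergence plus Dini. That part stands.

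The gap is in your verification of condition (\ref{lambda-ijk-00}). From $\lambda_{i,j,k}\le \sup_{x,y\in\bar W_j} d_{(W_k,g_i)}(x,y)$ and smooth convergence on $W_k$ you obtain $\limsup_i\lambda_{i,j,k}\le \diam_{(W_k,g_\infty)}(W_j)$, an \emph{intrinsic} diameter measured with paths confined to $W_k$. Your next inequality, ``$\diam_{g_\infty}(W_k)\le\diam_{g_\infty}(M\setminus S)$'', is not trivial and is false in general for fixed $k$: since $W_k\subset M\setminus S$, one has $d_{(W_k,g_\infty)}\ge d_{(M\setminus S,g_\infty)}$, so the intrinsic diameter of $W_k$ can strictly exceed $\diam_{g_\infty}(M\setminus S)$ (think of a flat disk with a long slit removed, where $W_k$ misses a neighborhood of the slit and paths must detour around it). If instead you read $\diam_{g_\infty}(W_k)$ extrinsically, the second inequality is trivial but the first one no longer follows from your bound on $\lambda_{i,j,k}$. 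This is exactly the point the paper isolates as its Claim (\ref{claim-for-well-embedded}), namely $\limsup_{k\to\infty}\diam_{(W_k,g_\infty)}(W_j)\le \diam_{(M\setminus S,g_\infty)}(W_j)$, proved by a separate compactness/curve argument; only the limsup over $k$ is controlled, not each fixed $k$. Note also that your main step ($\lim_k\lambda_{i,j,k}=0$ for fixed $i$) cannot be used as a substitute, because (\ref{lambda-ijk-00}) takes the limit in $i$ first and the rate of convergence in $k$ may depend on $i$. The repair is available within your own framework: run the same monotone-convergence-plus-Dini (or just the exhaustion) argument with $g_\infty$ on $M\setminus S$, using that any curve in $M\setminus S$ joining points of $\bar W_j$ lies in some $W_k$, to conclude $\limsup_k \diam_{(W_k,g_\infty)}(W_j)\le \diam_{g_\infty}(M\setminus S)<\infty$, and then take $\lambda_0=\diam_{g_\infty}(M\setminus S)$.
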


\begin{proof}
We claim for fixed $i,j$,
\be
\lambda_{i,j}=\limsup_{k\to \infty} \lambda_{i,j,k} =0.
\ee

Suppose not. Let $x_{i,j,k}, y_{i,j,k}\subset \bar{W}_j$ achieve to
supremum in the definition of $\lambda_{i,j}$.

Since $\bar{W_j}$ is compact, a subsequence
as $k\to \infty$ converges to $x_{i,j}, y_{i,j} \subset \bar{W}_j$.
Let $\gamma_{i,j}$ be a minimizing geodesic between
these points in $M$ with respect to $g_i$.  Since $S$ is
a set of codimension strictly larger than than $1$, by applying Lemma \ref{codim-lem}, we can find a curve
$C_{i,j}: [0,1]\to M\setminus S$ between these points such that
\be
L_{g_i}(C_{i,j}) \le d_{M, g_i}(x_{i,j}, y_{i,j}) + \lambda_{i,j}/5,
\ee

Let $k$ be chosen from the subsequence sufficiently large that
\begin{eqnarray*}
& C_{i,j}([0,1]) \subset W_k,\\
&d_{(\bar{W}_j,g_i)}(x_{i,j,k}, x_{i,j}) < \lambda_{i,j}/10,\\
&d_{(\bar{W}_j,g_i)}(y_{i,j,k}, y_{i,j}) < \lambda_{i,j}/10,
\end{eqnarray*}
Thus
\begin{eqnarray*}
d_{(\bar{W}_k,g_i)}(x_{i,j,k}, y_{i,j,k})
&\le&
d_{(\bar{W}_k,g_i)}(x_{i,j,k}, x_{i,j})+
d_{(\bar{W}_k,g_i)}(x_{i,j}, y_{i,j}) +
d_{(\bar{W}_k,g_i)}(y_{i,j}, y_{i,j,k}) \\
&\le&
d_{(\bar{W}_j,g_i)}(x_{i,j,k}, x_{i,j})+
L(C_{i,j}) +
d_{(\bar{W}_j,g_i)}(y_{i,j}, y_{i,j,k}) \\
&\le&
\lambda_{i,j}/10+
d_{M, g_i}(x_{i,j}, y_{i,j}) + \lambda_{i,j}/5
+\lambda_{i,j}/10 \\
&\le&
2\lambda_{i,j}/5+
d_{M, g_i}(x_{i,j}, x_{i,j,k}) +
d_{M, g_i}(x_{i,j,k}, y_{i,j,k}) +
d_{M, g_i}(y_{i,j,k}, y_{i,j})
 \\
 &\le&
2\lambda_{i,j}/5+
d_{W_j, g_i}(x_{i,j}, x_{i,j,k}) +
d_{M, g_i}(x_{i,j,k}, y_{i,j,k}) +
d_{W_j, g_i}(y_{i,j,k}, y_{i,j})
 \\
 &\le&
3\lambda_{i,j}/5+
d_{M, g_i}(x_{i,j,k}, y_{i,j,k})
 \\
 &\le&
3\lambda_{i,j}/5+
d_{W_k, g_i}(x_{i,j,k}, y_{i,j,k}) -\lambda_{i,j,k},
 \end{eqnarray*}
 by the choice of $x_{i,j,k}$ and $y_{i,j,k}$.  This is
 a contradiction.

 Next we must show
 \be
\limsup_{j\to \infty} \limsup_{k\to \infty} \limsup_{i\to\infty} \lambda_{i,j,k}
\le \lambda_0.
\ee
  Observe that
 \be
 \lambda_{i,j,k} \le \bar{\lambda}_{i,j,k}=\diam_{(W_{k}, g_i)}(W_j).
 \ee
 Since $g_i \to g_\infty$ on $W_k$ we know
 \be
 \limsup_{i\to \infty} \lambda_{i,j,k}
 \le \diam_{(W_{k}, g_\infty)}(W_j).
 \ee
 
\textbf{Claim:} 
 \be\label{claim-for-well-embedded}
 	\limsup_{k\to\infty} \diam_{\left(W_k , g_\infty \right)} (W_j) \le \diam_{\left(M \setminus S , g_\infty \right)} ( W_j ).
 \ee
 
 Suppose not; then, there exists $s > 0$ and a subsequence $k \to  \infty$ such that
 \be
 	\limsup_{k\to\infty} \diam_{\left(W_k , g_\infty \right)} (W_j) = L > \diam_{\left(M \setminus S , g_\infty \right)} ( W_j ) + 5\delta.
 \ee
 
 Pick $x_k , y_k \in W_j$ so that
 \be
 	\diam_{\left(W_k , g_\infty \right)} (W_j) \le d_{\left(W_k , g_\infty \right)} (x_k , y_k) + \delta.
 \ee
 
$\bar{W}_j$ is compact therefore, after passing to a subsequence,
\begin{eqnarray}
	x_k \to x \in \bar{W}_j\\
	y_k \to y \in \bar{W}_j.
\end{eqnarray}
 
Therefore, there exists a curve $c : [0,1] \to M \setminus S $ such that,
\begin{eqnarray}
	L_{g_\infty}(c) &<& d_{(M\setminus S , g_\infty)}(x,y) + \delta \\ &<& \diam_{(M \setminus S , g_\infty)}(\bar{W}_j) + \delta \\ &\le& \diam_{(M \setminus S , g_\infty)}(W_j) + \delta \\ &<& L - 4\delta.
\end{eqnarray}
 
For $k$ sufficiently large, we have 
\be
	c([0,1]) \subset W_k,
\ee
so 
\begin{eqnarray}
	L - 4\delta > L_{g_\infty} (c) &>& d_{\left(W_k , g_\infty \right)} (x,y) \\ &>& d_{\left(W_k , g_\infty \right)} (x_k,y_k) - 2\delta \\ &\ge& \diam_{\left(W_k , g_\infty \right)} (W_j) - 3\delta.
\end{eqnarray}
 
Taking the limit as $k \to \infty$, we get:
\be
	L - 4\delta \ge L.
\ee
which is a contradiction hence, the claim is proved and we have
 \be
 \limsup_{k\to\infty}\limsup_{i\to \infty} \lambda_{i,j,k}
 \le \diam_{(M \setminus S, g_\infty)}(W_j),
 \ee
 and so
 \be
\limsup_{j\to \infty} \limsup_{k\to \infty} \limsup_{i\to\infty} \lambda_{i,j,k}
\le \diam_{g_\infty}(M\setminus S).
\ee
\end{proof}

\textbf{Proof of Theorem \ref{codim-thm}:}
\begin{proof}
	The lemmas \ref{codim-lem} and \ref{codim-lambda-lem} prove the well-embeddedness and then applying the Theorem \ref{flat-to-settled} completes the proof of Theorem \ref{codim-thm}.
\end{proof}

\begin{rmrk} \label{codim-necessity}
Example \ref{ex-pinched-torus} demonstrates that the connectivity of the exhaustion in Theorems~\ref{codim-thm} hence, in Theorems \ref{Ricci-thm-improved} and \ref{c-codim-thm} is a necessary condition. The excess volume bound in (\ref{not-vol-2}) is shown to
be necessary in ~\cite[Example 3.7]{Lakzian-Sormani}.
All these examples satisfy the uniform embeddedness hypothesis
of Theorem~\ref{flat-to-settled} and demonstrate the necessity of these
conditions in that theorem as well.   
By Lemma~\ref{lem-Ricci-diam}, the diameter hypothesis
is not necessary when the Ricci curvature is nonnegative
although the volume condition is still necessary as seen in
~ \cite[Example 3.8]{Lakzian-Sormani}.
Otherwise we see this is a necessary condition in Example \ref{ex-neg-Ric-bound}.
We were unable to find an example proving the necessity of
the uniform bound on the boundary volumes, (\ref{area-2}), 
and suggest this as an open question in ~\cite[Remark 3.15]{Lakzian-Sormani}.
The Hausdorff measure condition $H^{n-1}(S)=0$ of Theorem~\ref{codim-thm}
and the 
uniform embeddedness hypothesis of Theorem~\ref{flat-to-settled}
are seen to be necessary for their respective theorems
in \ref{ex-slit-torus}.
\end{rmrk}

\textbf{Proof of Theorem \ref{Ricci-thm-improved}:}

\begin{proof}  
	The assumption $H^{n-1}(S)=0$ along with the hypotheses (\ref{diam-2}), (\ref{area-2}) and (\ref{not-vol-2}), allows us to apply Theorem \ref{codim-thm}. Therefore, $(M_i , g_i)$ has an intrinsic flat limit and this limit coincides with the settled completion  of $(M \setminus S , g_\infty)$. Now by proposition \ref{prop-improve-sw}, the Gromov-Hausdorff and Intrinsic Flat limits agree.
\end{proof}

\begin{rmrk}\label{necessity-diam-codim}
	Example \ref{ex-neg-Ric-bound} proves the necessity of the the condition (\ref{diam-2}) in Theorem \ref{Ricci-thm-improved}.
\end{rmrk}

\begin{rmrk}\label{not-vol-necessity}
From ~\cite{ChCo-PartII},  Ricci bounded below and $d_{GH}(M_i , M) \to 0$ imply $\vol(M_i) \to \vol(M)$ (conjectured by Anderson-Cheeger) and $M_i$ are homeomorphic to $M$ for $i$ sufficiently large  (later proved diffeomorphic in ~\cite{Perelman-max-vol} .) This means that (\ref{not-vol-2}) in Theorem \ref{Ricci-thm-improved} is a necessary condition. 
\end{rmrk}

\begin{thm}\label{c-codim-thm}
Let $M_i=(M,g_i)$ be a sequence of oriented compact Riemannian manifolds
with a uniform linear contractibility
function, $\rho$, which converges smoothly away from
a  closed singular subset, $S$, with $H^{n-1}(S) = 0$.  
If there is a connected precompact exhaustion of
$M\setminus S$  as in (\ref{defn-precompact-exhaustion})
satisfying the volume conditions
\be \label{area-4}
\vol_{g_i}(\partial W_j) \le A_0
\ee
and
\be \label{not-vol-4}
\vol_{g_i}(M\setminus W_j) \le V_j \textrm{ where } \lim_{j\to\infty}V_j=0,
\ee
then
\be
\lim_{j\to \infty} d_{GH}(M_j, N)=0,
\ee
where $N$ is the settled and
metric completion of $(M\setminus S, g_\infty)$.
\end{thm}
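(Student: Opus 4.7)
The plan is to combine the ingredients already developed in this section with the proof strategy of Theorem \ref{c-smooth-to-GH}. The new content of Theorem \ref{c-codim-thm} relative to Theorem \ref{c-smooth-to-GH} is that the uniform well-embeddedness of the exhaustion is no longer assumed directly, but instead must be extracted from the Hausdorff measure hypothesis $H^{n-1}(S)=0$ via Lemma \ref{codim-lambda-lem}. Everything else in the proof of Theorem \ref{c-smooth-to-GH} carries over verbatim.

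First I would apply Lemma \ref{lem-vol-vol} to the volume hypotheses (\ref{area-4}) and (\ref{not-vol-4}) together with the fact that the smooth limit $(M\setminus S, g_\infty)$ has finite volume (from (\ref{not-vol-4}) and the finite volume of each $W_j$), to conclude a uniform bound $\vol(M_i)\le V_0$. Combined with the uniform linear contractibility function $\rho$, the Greene-Petersen compactness theorem then yields a uniform diameter bound $\diam(M_i)\le D_0$. Since $g_i\to g_\infty$ smoothly on each $W_j$, this diameter bound descends to the smooth limit in the sense that
\be
\diam_{(W_j,g_\infty)}(W_j)\le D_0 \qquad \forall j,
\ee
and hence $\diam_{g_\infty}(M\setminus S)\le D_0 <\infty$.

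Next I would invoke Lemma \ref{codim-lambda-lem}: the hypotheses $H^{n-1}(S)=0$ and $\diam_{g_\infty}(M\setminus S)<\infty$ are now in place, so any connected precompact exhaustion $W_j$ of $M\setminus S$ is uniformly well embedded in the sense of Definition \ref{well-embedded}. This is the key technical bridge, and the only genuinely new step beyond the argument for Theorem \ref{c-smooth-to-GH}; it is essentially supplied by the work already done in Lemmas \ref{codim-lem} and \ref{codim-lambda-lem}, which use the codimension-one smallness of $S$ to push shortest paths off of $S$ without lengthening them appreciably.

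With uniform well-embeddedness and the diameter, area, and volume bounds in hand, the hypotheses of Theorem \ref{flat-to-settled} are satisfied, so
\be
\lim_{j\to\infty} d_{\mathcal{F}}(M_j', N')=0,
\ee
where $N'$ is the settled completion of $(M\setminus S,g_\infty)$. Finally, I would apply Theorem \ref{thm-sw-contractible}: the uniform linear contractibility function and uniform volume bound guarantee that the intrinsic flat and Gromov-Hausdorff limits coincide, so that in particular $N'$ agrees with the metric completion $N$ and
\be
\lim_{j\to\infty} d_{GH}(M_j,N)=0,
\ee
as required. The main obstacle, conceptually, is verifying the well-embeddedness in the absence of codimension-two smoothness assumed in the earlier work [LS]; but this is precisely what Lemma \ref{codim-lambda-lem} resolves, so the remaining argument is a straightforward concatenation of compactness and convergence results already recorded in Section \ref{Sect-review}.
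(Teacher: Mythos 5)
Your argument is essentially the paper's own proof: the paper likewise obtains $\diam(M_i)\le D_0$ from Lemma \ref{lem-vol-vol} and the Greene-Petersen compactness theorem (i.e.\ the proof of Theorem \ref{c-smooth-to-GH}), invokes Lemma \ref{codim-lambda-lem} together with $H^{n-1}(S)=0$ to get uniform well-embeddedness, and then applies Theorem \ref{c-smooth-to-GH} (which is exactly your concatenation of Theorem \ref{flat-to-settled} and Theorem \ref{thm-sw-contractible}). One small caution: your intermediate inequality $\diam_{(W_j,g_\infty)}(W_j)\le D_0$ is not immediate, since the intrinsic diameter of a subset can exceed the ambient diameter of $M_i$; what Lemma \ref{codim-lambda-lem} actually requires is only $\diam_{g_\infty}(M\setminus S)<\infty$, and the paper makes this same passage from the bound $\diam(M_i)\le D_0$ without further justification.
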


\begin{proof}
	By the proof of Theorem \ref{c-smooth-to-GH}, we see that
\be
	\diam(M_i) \le D_0,
\ee	
This along with $H^{n-1}(S)=0$, (\ref{area-4}) and (\ref{not-vol-4}), allows us to apply the Lemma \ref{codim-lambda-lem} to get the well-embeddedness of the exhaustion $\{W_j\}$.  Then, we can fully apply Theorem \ref{c-smooth-to-GH} and that finishes the proof.
\end{proof}

%=================================================================%
%================= SECTION: DIAMETER CONTROLS  (main result II )===================%
%=================================================================%

\section{Diameter Controls $\Longrightarrow$ Well-embeddedness.}\label{main-result-II}
In this section we prove Theorems \ref{diam-thm}, \ref{Ricci-diam-thm} and its counterpart (with Ricci condition replaced with contractibility condition.) In the theorems of this section, there is no co-dimension condition on the singular set $S$. We first need to prove the following lemma:

\begin{lem}\label{lem-diameter-control-1}
Suppose $W_j$ is a connected precompact exhaustion of $M \setminus S$ with boundaries $\partial W_j$
such that any connected component of $M \setminus W_j$ has a connected boundary.
If the intrinsic diameters satisfy
\be
\diam_{(W_j, g_i)}(W_j)\le D_{int},
\ee
and
\be \label{new-bridge-1}
\limsup_{i \to \infty} \;  \left\{ \sum_\beta \diam_{(\Omega^\beta_j , g_i)} (\Omega^\beta_j) :  \text{$\Omega^\beta_j$ connected component of $\partial W_j$} \right\} \;
\le B_j
\ee
satisfies $\lim_{j \to \infty} B_j = 0$
then $W_j$ is uniformly well embedded.
\end{lem}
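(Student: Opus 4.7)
The plan is to establish directly that for all $k \ge j+1$ and all $i$,
\begin{equation*}
\lambda_{i,j,k} \le \sum_\beta \diam_{(\Omega^\beta_j, g_i)}(\Omega^\beta_j).
\end{equation*}
Once this is in place, uniform well-embeddedness follows quickly: taking $\limsup_k$ and then $\limsup_i$ gives $\lambda_j \le B_j$, so $\lim_{j\to\infty} \lambda_j = 0$; and the triple $\limsup$ in (\ref{lambda-ijk-00}) is likewise dominated by $\limsup_j B_j = 0$, so one may take $\lambda_0=0$.

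To prove the bound, fix $x, y \in W_j$ and choose a near-minimizing curve $\gamma:[0,L]\to (M,g_i)$ from $x$ to $y$ with $L \le d_{(M,g_i)}(x,y)+\epsilon$; this is meaningful because $d_{(M,g_i)}(x,y) \le \diam_{(W_j,g_i)}(W_j) \le D_{int}$. I will construct a competitor $\tilde\gamma$ lying entirely in $\bar W_j \subset W_{j+1} \subset W_k$, so that $d_{(W_k,g_i)}(x,y) \le L(\tilde\gamma)$, by a short-circuit procedure: starting with $\tau_0 = 0$, inductively, given $\tau_\ell$ with $\gamma(\tau_\ell) \in \bar W_j$, let $\sigma_\ell = \inf\{t \ge \tau_\ell : \gamma(t) \notin \bar W_j\}$; if no such $t$ exists in $[0,L]$, append $\gamma|_{[\tau_\ell, L]}$ to $\tilde\gamma$ and terminate. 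Otherwise $\gamma(\sigma_\ell) \in \partial W_j$ and $\gamma$ enters a unique connected component $C_{\beta_\ell}$ of $M\setminus W_j$ whose boundary $\Omega^{\beta_\ell}_j$ is connected by hypothesis. Set $\tau_{\ell+1} = \sup\{t : \gamma(t) \in C_{\beta_\ell}\}$, which is attained because $C_{\beta_\ell}$ is closed, and conclude that $\gamma(\tau_{\ell+1}) \in \Omega^{\beta_\ell}_j$ also. Replace $\gamma|_{[\sigma_\ell, \tau_{\ell+1}]}$ by a curve in $\Omega^{\beta_\ell}_j$ joining $\gamma(\sigma_\ell)$ and $\gamma(\tau_{\ell+1})$ of length at most $\diam_{(\Omega^{\beta_\ell}_j, g_i)}(\Omega^{\beta_\ell}_j)$, and iterate.

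Two observations drive the length estimate. First, the labels $\beta_\ell$ are pairwise distinct: by the definition of $\tau_{\ell+1}$ the curve $\gamma$ does not re-enter $C_{\beta_\ell}$ after $\tau_{\ell+1}$, so every subsequent exit heads into a strictly different component. Second, immediately after $\tau_{\ell+1}$ the curve cannot sit in any other component $C_{\beta'}$ (that would force $\gamma(\tau_{\ell+1}) \in C_{\beta'} \cap C_{\beta_\ell} = \emptyset$ by continuity), so it re-enters $W_j$ and the induction proceeds. Summing yields
\begin{equation*}
L(\tilde\gamma) \le L + \sum_\ell \diam_{(\Omega^{\beta_\ell}_j, g_i)}(\Omega^{\beta_\ell}_j) \le L + \sum_\beta \diam_{(\Omega^\beta_j, g_i)}(\Omega^\beta_j),
\end{equation*}
so $d_{(W_k, g_i)}(x,y) \le d_{(M,g_i)}(x,y) + \epsilon + \sum_\beta \diam_{(\Omega^\beta_j, g_i)}(\Omega^\beta_j)$. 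Letting $\epsilon \to 0$ and taking the supremum over $x,y\in W_j$ establishes the claimed bound.

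The main obstacle I anticipate is the pointset formalization of the short-circuit procedure and the handling of a possibly countable infinity of iterations: the $\tau_\ell$ form a monotone sequence in $[0,L]$ converging to some $\tau^*$, and one must verify that $\tilde\gamma$ extends continuously at $\tau^*$ with the length bound preserved, and that the tail $\gamma|_{[\tau^*, L]}$ lies in $\bar W_j$. The finiteness of $\sum_\beta \diam_{(\Omega^\beta_j, g_i)}(\Omega^\beta_j)$ together with the distinctness of the $\beta_\ell$ takes care of these technicalities.
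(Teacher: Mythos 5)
Your proposal is correct and rests on essentially the same argument as the paper: the key step in both is the short-circuit construction that reroutes each excursion of a (near-)minimizing curve into a component of $M\setminus W_j$ through that component's connected boundary, each component being charged only once (first entry to last exit), so the total added length is at most $\sum_\beta \diam_{(\Omega^\beta_j,g_i)}(\Omega^\beta_j)$, which is eventually bounded by $B_j\to 0$. The only difference is bookkeeping: you bound $\lambda_{i,j,k}$ directly for each fixed $i,j,k$ using a curve lying in $\bar W_j\subset W_k$, whereas the paper first passes to limit points and a limit geodesic as $k\to\infty$ and then performs the same replacement procedure with an Arzel\`a--Ascoli limit of the modified curves; the point-set technicalities you flag (accumulating components, countably many excursions) are of the same nature as those the paper itself glosses over.
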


\begin{proof}
Recall from Definition \ref{well-embedded} that we have
\be
\lambda_{i,j,k}= \sup_{x,y\in W_j} |d_{(W_{k}, g_i)}(x,y)- d_{(M,g_i)}(x,y)| \le \diam_{(W_k , g_i)}(W_j).
\ee
Since 
\be
\diam_{W_k, g_i}(W_j)\le \diam_{W_k, g_i}(W_k),
\ee
and
$g_i$ converges smoothly on $W_k$ we have,
\be
    \lim_{i \to \infty} \diam_{(W_k , g_i)}(W_k) = \diam_{(W_k , g_\infty)}(W_k) \le D_{int}.
\ee

Therefore,
\be
\limsup_{j\to \infty} \limsup_{k\to \infty} \limsup_{i\to\infty} \lambda_{i,j,k}
\le \limsup_{j\to \infty} \limsup_{k\to \infty} \lim_{i\to\infty} \diam_{(W_k , g_i)}(W_k) \le D_{int}.
\ee

Now suppose $x_{ijk} , y_{ijk} \in \partial{W}_j$ give the supremum in the definition of $\lambda_{ijk}$, Let $\gamma_{ijk}$ and $C_{ijk}$ be shortest paths between $x_{ijk}$ and $y_{ijk}$ in $\bar{W_k}$ and $M$ respectively.  Letting $k \to \infty$ and passing to a subsequence if necessary,  $x_{ijk} \to x_{ij}  \in \bar{W_j}$ and $y_{ijk} \to y_{ij}  \in \bar{W_j}$.  Passing to a subsequence again if necessary, $C_{ijk}$ converges to $C_{ij}$ which is a shortest path between $x_{ij}$ and $y_{ij}$ in $M$ (c.f. \cite{BBI}[Prop 2.5.17]). And let $\gamma^k_{ij}$ be the shortest path between $x_{ij}$ and $y_{ij}$ in $\bar{W_k}$. 

We will estimate $C_{ij}$ by curves in $\partial W_{j}$ with controlled increase in the length. Denote the curve obtained in $n$th step by $C^n_{ij}$ and let $C^0_{ij} = C_{ij}$. To obtain $C^{n+1}_{ij}$ from $C^{n}_{ij}$, we proceed as follows: Suppose $\{\Omega_j^n\}_{n\in {N}}$ are the connected components met by $C_{ij}$ (in more than one point). If $C^n_{ij}$ does not intersect $\Omega_j^{n+1}$, then we let $C^{n+1}_{ij} = C^{n}_{ij}$ . If $C^n_{ij}$ intersects $\Omega_j^{n+1}$ then define
\be
	t_1 = \inf \left\{ t : C^n_{ij}(t) \in \Omega_j^{n+1} \right\},
\ee
and
\be
	t_2 = \sup \left\{ t : C^n_{ij}(t) \in \Omega_j^{n+1} \right\}.
\ee 
  
Since $\Omega_j^{n+1}$ is connected, we can replace the segment $C^n_{ij}[t_1 , t_2]$ with a shortest path in $\Omega_j^{n+1}$. The curve obtained in this way is our $C^{n+1}_{ij}$. Note that connectivity of the boundary components of $M \setminus W_j$ implies that if $C^n_{ij}$ enters $M \setminus W_j$ through $\Omega_j^{n+1}$ at time $t$, then it has to  intersect $\Omega_j^{n+1}$ again at time $t' > t$ in order to enter $ W_j $. 

This construction implies that for all $n$,
\be
	L(C^n_{ij}) \le L(C^0_{ij}) + B_j,
\ee
hence, the sequence $\{C^n_{ij}\}_{n\in {N}}$ obtained in this way have uniform bounded length and as a result, we can apply the Arzela-Ascoli's theorem to obtain, after possibly passing to a subsequence, a  limit $C'_{ij}$ i.e. $C'_{ij}$ is a curve with end points $x_{ij},y_{ij}$ and there are parametrizations of $\{C^n_{ij}\}_{n\in {N}}$ and $C'_{ij}$ on the same domain such that $\{C^n_{ij}\}_{n\in {N}}$ uniformly converges to $C'_{ij}$.  We claim that $C'_{ij}$ is contained in $\bar{W_j}$.  For any $t$, tracing the curve $C^0_{ij}$ back and forth from the point $C^0_{ij}(t)$, we reach two immediate components $\Omega_j^l$ and $\Omega_j^m$ and this means that for $n \ge \max\{l , m\}$, $C^n_{ij}(t) \in \bar{W_j}$ and since $C^n_{ij}(t) \to C'_{ij}(t)$ we must have $C'_{ij}(t) \in \bar{W_j}$ . Furthermore, for $i$ large enough (depending on $j$)
\begin{eqnarray}\label{eq:lemdiam1}
 	\limsup_{i \to \infty} \left( L(\gamma^k_{ij}) - L(C_{ij}) \right) &\le& \limsup_{i \to \infty}  \left( L(C'_{ij}) - L(C_{ij}) \right) \notag \\ &\le& \limsup_{i \to \infty} \sum_\beta \diam^{g_i}_{\Omega^\beta_j}\left(\Omega^\beta_j\right)\\ &< & B_{j}. \notag
\end{eqnarray}

Also
\begin{eqnarray}\label{eq:lemdiam2}
          \lim_{k \to \infty} \lambda_{ijk} &=& \lim_{k \to \infty}  \left(  d_{(W_k , g_i)} \left( x_{ijk} , y_{ijk} \right) - d_{(M , g_i)} \left( x_{ijk} , y_{ijk} \right)  \right) \notag \\
          &\le&  \lim_{k \to \infty}  \left(  d_{(W_m , g_i)} \left( x_{ijk} , y_{ijk} \right) - d_{(M , g_i)} \left( x_{ijk} , y_{ijk} \right)  \right) \\ & =&  L(\gamma^m_{ij}) - L(C_{ij}). \notag
\end{eqnarray}

Therefore, combining (\ref{eq:lemdiam1}) and (\ref{eq:lemdiam2}) we have
\be
	\lambda_{j} = \limsup_{i \to \infty} \lim_{k \to \infty} \lambda_{ijk}  \le B_{j},
\ee
hence,
\be
	\limsup_{j \to \infty} \lambda_j  \le  \lim_{j \to \infty} B_{j}  = 0.
\ee
\end{proof}

\textbf{Proof of Theorem \ref{diam-thm}:}
\begin{proof}
The Lemma \ref{lem-diameter-control-1} combined with Theorem~\ref{flat-to-settled}
completes the proof of Theorem~\ref{diam-thm}.  Recall
Definition~\ref{well-embedded}.
\end{proof}

\begin{rmrk}\label{necessity-diam}
 Example \ref{ex-pinched-torus} demonstrates that the connectivity of the exhaustion in Theorem \ref{diam-thm}, hence in Theorems \ref{Ricci-diam-thm} and \ref{c-diam-GH} is a necessary condition. Example \ref{ex-pulled-torus} demonstrates that in Theorem \ref{diam-thm}, hence, in Theorems \ref{Ricci-diam-thm}  and  \ref{c-diam-GH}, the condition on the intrinsic diameter can not be replaced by the same condition on the extrinsic diameter. The necessity of other conditions  follow as in Remark \ref{codim-necessity}.
\end{rmrk}

\textbf{Proof of Theorem~\ref{Ricci-diam-thm}:}
\begin{proof}
The hypothesis $\diam(M_j) \le D_0$ combined with the hypothesis including (\ref{area-2}) and (\ref{not-vol-2}),
allows us to apply Theorem~\ref{diam-thm}.   So 
$(M_i, g_i)$ has an intrinsic flat limit and this intrinsic
flat limit is the settled completion of $(M\setminus S, g_\infty)$. Thus by
Proposition~\ref{prop-improve-sw}, the Gromov-Hausdorff
and Intrinsic Flat limits agree. 
\end{proof}

\begin{rmrk}\label{necessity-diam-diam}
	Example \ref{ex-neg-Ric-bound} proves the necessity of the the condition (\ref{diam-M}) in Theorem \ref{Ricci-diam-thm}.
\end{rmrk}

\begin{rmrk}\label{not-vol-necessity}
From ~\cite{ChCo-PartII},  Ricci bounded below and $d_{GH}(M_i , M) \to 0$ imply $\vol(M_i) \to \vol(M)$ (conjectured by Anderson-Cheeger) and $M_i$ are homeomorphic to $M$ for $i$ sufficiently large  (later proved diffeomorphic in ~\cite{Perelman-max-vol} .) This means that (\ref{not-vol-2}) in Theorem \ref{Ricci-diam-thm} is a necessary condition. 
\end{rmrk}

\begin{thm}\label{c-diam-GH}
Let $M_i=(M,g_i)$ be a sequence of Riemannian manifolds
with a uniform linear contractibility
function, $\rho$,
which converges smoothly away from
a closed singular set, $S$, with
\be 
\vol(M_i) \le V_0.
\ee  

If there is a connected precompact exhaustion, $W_j$, of
$M\setminus S$, satisfying (\ref{defn-precompact-exhaustion})
such that 
each connected component of $M\setminus W_j$ has a connected boundary, satisfying (\ref{new-bridge-2})-(\ref{m-int-diam}),
(\ref{m-area}) and (\ref{m-edge-volume})
then
\be
\lim_{j\to \infty} d_{GH}(M_j, N)=0.
\ee
where $N$ is the metric completion
of $(M\setminus S, g_\infty)$.
\end{thm}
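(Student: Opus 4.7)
The plan is to mirror the argument used for Theorem~\ref{c-codim-thm}, with Lemma~\ref{lem-diameter-control-1} (diameter-based well-embeddedness) in place of Lemma~\ref{codim-lambda-lem} (Hausdorff-measure-based well-embeddedness). Once uniform well-embeddedness is secured, everything else is already packaged in the earlier theorems: Theorem~\ref{flat-to-settled} produces the SWIF limit, and the Greene--Petersen compactness theorem together with Theorem~\ref{thm-sw-contractible} upgrades this to a Gromov--Hausdorff limit identified with $N$.

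\textbf{Step 1: Well-embeddedness.} The hypothesis (\ref{new-bridge-2}), the intrinsic diameter bound (\ref{m-int-diam}), and the assumption that each connected component of $M\setminus W_j$ has connected boundary are precisely the hypotheses of Lemma~\ref{lem-diameter-control-1}. Applying that lemma gives that the exhaustion $\{W_j\}$ is uniformly well-embedded in the sense of Definition~\ref{well-embedded}.

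\textbf{Step 2: SWIF convergence.} The remaining hypotheses of Theorem~\ref{flat-to-settled} are either given directly or follow trivially: the extrinsic diameter bound
\be
\diam_{M_i}(W_j) \le \diam_{(W_j, g_i)}(W_j) \le D_{int}
\ee
holds automatically since extrinsic distance in $M$ never exceeds intrinsic distance in $W_j$, while the volume controls (\ref{m-area}) and (\ref{m-edge-volume}) are part of the hypotheses. Thus Theorem~\ref{flat-to-settled} applies and yields $\lim_{j\to\infty} d_{\mathcal{F}}(M_j', N')=0$, where $N'$ is the settled completion of $(M\setminus S, g_\infty)$.

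\textbf{Step 3: Upgrading to GH and identifying the limit.} The uniform linear contractibility function $\rho$ together with $\vol(M_i)\le V_0$ lets us invoke the Greene--Petersen compactness theorem: every subsequence of $(M_i, g_i)$ has a further subsequence converging in the Gromov--Hausdorff sense to some compact metric space. For any such subsequence, Theorem~\ref{thm-sw-contractible} forces the GH limit to coincide with the intrinsic flat limit $N'$, and simultaneously guarantees that under the linear contractibility hypothesis the settled completion $N'$ equals the metric completion $N$ (every point of the limit inherits a positive lower density from the smooth model on $M\setminus S$, as in the proof of Theorem~\ref{c-smooth-to-GH}). Since every subsequence then has a further subsequence converging to the same $N$, the whole sequence $M_j$ converges to $N$ in the Gromov--Hausdorff sense.

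The only non-routine point I anticipate is the identification $N' = N$ and the identification of the GH limit with $N$; both are handled by appealing to Theorem~\ref{thm-sw-contractible} exactly as in Theorem~\ref{c-smooth-to-GH}, provided one verifies that the smooth convergence on each $W_k$ furnishes uniform positive lower density at every limit point. This is where the linear (as opposed to merely pointwise) nature of the contractibility function is genuinely used.
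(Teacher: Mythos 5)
Your proposal is correct and follows essentially the same route as the paper: the paper packages your Steps 1--2 as an appeal to Theorem~\ref{diam-thm} (which is exactly Lemma~\ref{lem-diameter-control-1} plus Theorem~\ref{flat-to-settled}) and then, as you do, uses Greene--Petersen compactness together with Theorem~\ref{thm-sw-contractible} to identify the GH limit with the settled/metric completion $N$. The only cosmetic difference is that the paper first extracts a uniform bound on $\diam(M_i)$ from Greene--Petersen, whereas you observe directly that $\diam_{M_i}(W_j)\le \diam_{(W_j,g_i)}(W_j)\le D_{int}$ suffices for Theorem~\ref{flat-to-settled}.
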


\begin{proof}
By Lemma~\ref{lem-vol-vol}, we have
\be \label{vol-4}
\vol(M_i) \le V_0.
\ee

This combined with the uniform contractibility function
allows us to apply the Greene-Petersen Compactness Theorem.
In particular we have a uniform upper bound on diameter
\be\label{diam-4}
\diam(M_i) \le D_0,
\ee

We may now apply Theorem~\ref{diam-thm}
to obtain
\be
    \lim_{j\to \infty} d_\mathcal{F}(M_j, N') =0.
\ee

We then apply Theorem~\ref{thm-sw-contractible}
to see that the flat limit and Gromov-Hausdorff
limits agree due to the existence of the uniform
linear contractibility function
and the fact that the volume
is bounded below uniformly by the smooth limit.
In particular the metric completion and the
settled completion agree.
\end{proof}

%=================================================================%
%======================= SECTION: EXAMPLES  =======================%
%=================================================================%

\section{Examples} \label{Sect-examples}

In this section we present some examples which helps in understanding the notions we have mentioned so far. Some examples will prove the necessity of some conditions in Theorem \ref{Ricci-diam-thm}.

\subsection{Unbounded Limits}
The following examples show why some sort of bounded geometry is necessary in this context.

\begin{example}\label{ex-not-compact}
There are metrics $g_j$ on the sphere $M^3$ 
with a uniform upper bound on volume such that
$(M^3, g_j)$ converge smoothly away from a point singularity
$S=\{p_0\}$ to a complete noncompact manifold.  There is
no Gromov-Hausdorff limit in this case.   The intrinsic flat
limit is $(M\setminus S, g_\infty)$. 
\end{example}

\begin{proof}
Let
\be
g_0= h^2(r) dr^2 + f^2(r) g_{S^2},
\ee
be defined on $M^3\setminus S$ as a complete metric such that
\be
\int_0^\pi h(r) dr =\infty,
\ee
and
\be \label{ex-vol-eqn}
\int_0^\pi \omega_2 h(r) f^2(r) dr<\infty,
\ee
so that $\diam (M\setminus S, g_0)=\infty$ and $\vol(M\setminus S, g_0)<\infty$.

We set
\be
g_j= h_j^2(r) dr^2 + f_j^2(r) g_{S^2},
\ee
such that
\begin{eqnarray}
h_j(r)&=&h(r) \qquad r\in [0, \pi-1/j],\\
f_j(r)&=&f(r) \qquad r\in [0, \pi-1/j],
\end{eqnarray}
and extend smoothly so that $g_j$ is a metric on $S^3$.

Metrics $g_j$ converge smoothly to $g_0$ away from $S=\{p_0\}=r^{-1}(\pi)$
and, since $(M\setminus S, g_0)$ is noncompact, $(M, g_j)$
has no Gromov-Hausdorff limit.   The intrinsic flat
limit of $(M, g_j)$ is the settled completion of $(M\setminus S, g_0)$
by Theorem~\ref{codim-thm}, 
taking $W_j=r^{-1}[0, \pi-1/j)$
since
\be
\int_{\pi-1/k}^\pi \omega_2 h(r) f^2(r) dr =0,
\ee
by the finiteness of (\ref{ex-vol-eqn}) and we also have
$\vol_{g_i}(\partial(W_j))\le f^2(r)$.   In this
case the settled completion is just $(M\setminus S, g_0)$
because it is already a complete metric space with positive
density.
\end{proof}

\begin{example}\label{ex-no-F}
There are metrics $g_j$ on $M^3=S^3$ converging smoothly
away from a singular set $S=\{p_0\}$ to a complete noncompact
manifold of infinite volume.  $(M, g_j)$ have no intrinsic flat
or Gromov-Hausdorff limit since, if such a limit existed it would
have to contain the smooth limit and the smooth limit has
infinite diameter and volume.
\end{example}

\begin{proof}
We define
a metric $g_0$ on $M\setminus S$ exactly as
in Example~\ref{ex-not-compact} except that we
replace (\ref{ex-vol-eqn}) with
\be \label{ex-vol-eqn-2}
\int_0^\pi \omega_2 h(r) f^2(r) dr =\infty,
\ee
so that $\diam (M\setminus S, g_0)=\infty$ and $\vol(M\setminus S, g_0)=\infty$.

Selecting $g_j$ also as in that example, we have $(M, g_j)$
converge smoothly away from $S$ to $(M\setminus S, g_0)$.
However there is no Gromov-Hausdorff limit because the
diameter diverges to infinity \cite{Gromov-metric} and there is no intrinsic flat
limit because the volume diverges to infinity \cite{SorWen2}.
\end{proof}

One may define pointed Gromov-Hausdorff and pointed
intrinsic flat limits to deal with unboundedness.  However
even assuming boundedness, we see in  ~\cite[Example 3.11]{Lakzian-Sormani} that the Gromov-Hausdorff limit need not exist.

\subsection{Ricci Example}
This example shows that the mere uniform lower bound for Ricci curvature does not imply the existence of the Gromov-Hausdorff limit.

\begin{example}\label{ex-neg-Ric-bound}
There are metrics $g_j$ on $M^3=S^3$ with negative uniform lower bound on Ricci curvature, converging smoothly away from a singular set $S = \{ p_0 \}$ to a complete noncompact manifold of finite volume.
\end{example}

\begin{proof}
Consider the metric $g_0$ on $S^3 \setminus \{ p_0 \} = \R \times S^2$ given by
\be
	\bar{g}(t) = dt^2 + (\bar{f}(t))^2 g_{S^2},
\ee
where, $\bar{f}$ is a nonzero smooth function such that
\be
	\bar{f}(t) = \sin(t) \;\; \text{for}\;\; t \in [0 , \pi/2],
\ee
\be
	\bar{f}(t) = \exp{-t}  \;\; \text{for}\;\; t \in [\pi/2 + 1 , \infty),
\ee
with $\bar{f}''(t)<\bar{f}(t)$ elsewhere.
hence, $\bar{g}$ has Ricci curvature bounded below by 
\be
	- \Lambda =  - 2 \max \frac{\bar{f}''}{\bar{f}} > -\infty.
\ee
We can extract warped metrics $\bar{g}_j$ on $ [0 , j+1] \times S^2$ 
\be
	\bar{g}(t) = dt^2 +\bar{f}_j(t)^2 g_{S^2},
\ee
where $\bar{f}_j$ is a nonzero smooth function satisfying
\be
	\bar{f}_j(t) = \bar{f}(t)   \;\; \text{for}\;\; t \in [0 , j-1],
\ee
\be
	\bar{f}_j(t) = \exp{-j}\sin(\exp{j} (\pi + t - j - 1))   \;\; \text{for}\;\; t \in [j,j+1],
\ee
and
\be
	 - 2 \max \frac{\bar{f}_j''}{\bar{f}_j} \ge - \Lambda.
\ee
Note that in fact we are cutting off a part of $\bar{f}$ and replacing it with a less concave function which closes up like a $\sin$ function hence obtaining a metric on $S^3$, with lower bound on Ricci curvature.

It is clear that
\be
	\vol(M,g_0) \le \infty.
\ee

Let $\phi : [0,\pi] \to [0,\infty)$ be a smooth increasing function such that
\be \label{phi-1}
	\phi(r) = r    \;\; \text{for} \;\; r \in [0 , \pi/2],
\ee
with 
\be\label{phi-2}
	\lim_{r \to \pi} \phi(r) = \infty.
\ee

For $ j > 2$, let $\phi_j(r): [0 , \pi] \to [0 , L_j = j + \pi/2 + 1]$ be a smooth increasing function such that
\be \label{phi-3}
	\phi_j(r) = \phi(r)  \;\; \text{for} \;\; r \in [0 , \phi^{-1}(j + \pi/2)],
\ee
and
\be \label{phi-4}
	\phi_j (r) = j + r-\pi/2 + 1 \textrm{ for } r \textrm{ near } \pi.
\ee

 we construct metrics
\be
	g_j(r) = \phi_j^{*} \left(\bar{g}_j \right),
\ee
with Ricci bounded below by $- \Lambda$ converging smoothly away from $\{ p_0 \}$ to $\phi^{*}(\bar{g})$. Taking $W_j = r^{-1}([0 , \pi - 1/j))$ we observe that $W_j$ satisfies all the hypotheses in Theorem \ref{Ricci-diam-thm} except that $\diam_{M_i}(W_j)$ is not bounded. The Gromov-Hausdorff limit does not exist because $(M \setminus S , g_0) $ is complete noncompact. Both intrinsic flat limit and the metric completion coincide with the complete noncompact manifold $(M \setminus S , g_0) $.
\end{proof}

\begin{rmrk}\label{noncompact-nonneg-Ric}
From ~\cite{MR0417452}, we know that any complete noncompact manifold with nonnegative Ricci curvature has infinite volume, so Example~\ref{ex-neg-Ric-bound} can not be constructed with the sequence having nonnegative Ricci curvature.
 \end{rmrk}

\subsection{Pinching a torus}

\begin{example} \label{ex-pinched-torus}
There are $(M^2, g_j)$ all diffeomorphic to the torus, $S^1\times S^1$
which converge smoothly away from a singular set, $S=\{0\}\times S^1$,
to
\be
(M\setminus S, g_\infty)= \left( (0,2\pi)\times S^1, dt^2+ \sin^2(\frac{t}{2}) \;  ds^2\right).
\ee
So the metric completion and the settled completions are both
homeomorphic to
\be
M_\infty = [0,2\pi]\times S^1 / \sim,
\ee
where 
\be
(0, s_1) \sim (0, s_2) \;\; and \;\;  (2\pi, s_1) \sim (2\pi, s_2) \qquad \forall s_1, s_2 \in S^1.
\ee
However the Gromov-Hausdorff and Intrinsic Flat limits
identify these two end points.
\end{example}

\begin{proof}
Let $g_j$ on $M$ be defined by
\be
g_j= dt^2+ f_j^2(t) ds^2,
\ee
where $f_j:S^1 \to (0,1]$ are smooth with $|f_j'(t)|\le 1$
that decrease uniformly to $\sin(\frac{t}{2})$ and $f_j(t)=\sin(\frac{t}{2})$
for $t\in [1/j, 2\pi-1/j]$.
\end{proof}

\subsection{Examples of Slit Tori}

%\begin{lem}
%Let $M^2$ be the standard flat torus with $S$ as in Example~\ref{ex-slit}.
%Let $W_j= T_{1/j}(S)$ with respect to the flat norm.   Let
%$g_j$ be the flat metric on $M_2\setminus W_j$.   Then the metric
%completion of $(M\setminus S, g_\infty)$ is the slit torus.
%\textcolor{red}{describe better}
%\end{lem}

%\begin{proof}
%INSERT
%\end{proof}

\begin{example}\label{ex-slit-torus}
Let $(M^2, g)$ be the standard flat 2 torus $S^1\times S^1$
and $S\subset M^2$ a 
vertical geodesic
segment of length $\le \pi$, then if $g_j$ are a constant sequence
of the standard flat metric, we see that $(M^2, g_j)$ converges
smoothly to itself and thus the intrinsic flat and Gromov-Hausdorff limits
are both the flat torus.  However, the metric completion of $(M\setminus S, g_\infty)$ has two copies of the slit, $S$ (with end points identified hence the limit has fundamental group = $\Z$) one found taking 
limits of Cauchy sequences from the right and the other found taking 
limits of Cauchy sequences from the left.   
This example shows necessity of uniform well embeddedness
condition in our Theorems.
\end{example}

\begin{proof}
Let $M^2= S^1\times S^1 =[0,2\pi]\times[0,2\pi]/ \sim$
such that $(x,0)\sim(x,2\pi)$ and $(0,y)\sim(2\pi,y)$.   
Without loss of generality, we can assume $S=\{(\pi, y): \, y\in [\pi/2, 3\pi/2]\}$.  Then 
the metric completion of $M^2\setminus S$ is 
\be
	M_\infty = \frac{S^1 \times S^1 \times \{0\} \sqcup  S^1 \times S^1 \times \{1\}}{\sim},
\ee
where,
\be
(x,y,0) \sim (x,y.1) \;\;\text{for}\;\; (x,y) \not \in S,
\ee
with the distance $d_\infty$ given by
\be
	d_\infty \left([x,y,l] , [x',y',l']\right) = \lim_{\delta \to 0^+} d_{M^2\setminus S}\left( \left(x + (-1)^l \delta \;,\; y\right) , \left(x' + (-1)^{l'}\delta,y'\right)\right) 
\ee
for $l, l'=0,1$.  In particular,
\be
	d_\infty \left([\pi,\pi,0] , [\pi,\pi,1]\right) = \pi.
\ee

Notice that $M_\infty$ is not a manifold (not even Hausdorff as $B_r ([\pi , \pi , 0]) \cap B_r ([\pi , \pi , 1]) \neq \emptyset$ for all $r,r'$. ) Taking the connected precompact exhaustion 
\be
W_j = M^2 \setminus \left( [\pi/2 - 1/j , 3\pi/2 + 1/j] \times [\pi - 1/j , \pi + 1/j]\right),
\ee
we observe that
\begin{eqnarray}
	&\diam_{M_i}(W_j) \le  \diam (M^2) \notag\\ &\vol (M_i) = \vol (M^2) \\  &\vol_{g_i}(\partial W_j) \le 2\pi + 4,\notag
\end{eqnarray}
are uniformly bounded, also 
\be
	\lim_{j \to \infty} \vol_{g_i}(N \setminus W_j) =\lim_{j \to \infty} (2/j) (\pi + 2/j) = 0,
\ee
but, 
\begin{eqnarray}
	\lambda_{i,j,k} &=& \sup_{x,y\in W_j} |d_{(W_{k}, g_i)}(x,y)- d_{(M,g_i)}(x,y)| \notag \\
				& \ge& |d_{(W_{k}, g_i)} \left( (\pi - 1/j , \pi) , (\pi + 1/j , \pi) \right) - d_{(M,g_i)} \left( (\pi - 1/j , \pi) , (\pi + 1/j , \pi) \right)|\\
				&\ge&  \pi + 2/j + 2/k, \notag
\end{eqnarray}

Therefore,
\be
	\lim_{j\to\infty} \limsup_{i \to \infty} \limsup_{k\to\infty} \lambda_{i,j,k} \ge \lim_{j\to\infty} \limsup_{i \to \infty} \limsup_{k\to\infty} (\pi + 2/j + 2/k) = \pi,
\ee

\end{proof}

\begin{example}\label{ex-pulled-torus}
Let $(M^2,g_0)$ be the standard flat torus with $S$ as in Example~\ref{ex-slit-torus}.
Let $W_j= T_{1/j}(S)$ with respect to the flat norm.   Let
$g_j$ be the flat metric on $M^2\setminus W_j$. There exists smooth metrics $g_j$ on $M^2$ which agree with $g_0$ on $M^2 \setminus W_j$ such that 
the Gromov-Hausdorff and
Intrinsic Flat limits are the metric space created by taking the flat torus and
identifying all points in $S$ with each other. Then, $g_j$ converges smoothly away from $S$ to $g_\infty = g_0$. The metric completion of $(M \setminus S , g_\infty)$ is the slit torus as described in example \ref{ex-slit-torus}. These metrics
demonstrate that the diameter condition may not be replaced by
an extrinsic diameter condition in Theorem~\ref{diam-thm}
and in Theorem~\ref{c-diam-thm} but not the Ricci theorem
since they have negative curvature.
\end{example}

\begin{proof} 
Let $g_j= dt^2 + f_j(s,t)^2 ds^2$ where $f_j(s,t)=1$ on $W_j$
and $f_j(s,t)= 1/j$ on $S$, and smooth with values in $[1/j,1]$
everywhere. Let $\sim$ be defined as follows:
\be
	x \sim y  \;\;iff\;\;  x,y \in S
\ee

To estimate the GH and SWIF distance between $(M^2 , g_j)$ and $(\frac{M^2}{\sim} , d_0)$ we use the Theorem \ref{thm-subdiffeo}. First we need to find an estimate on the distortion $\lambda_j$ , which is defined by
\be \label{lambda-pulled-torus}
\lambda_j=\sup_{x,y \in W_j}
|d_{M_j}(x,y)-d_{\frac{M}{\sim}}(x,y)|.
\ee

Now let $P : M^2 \to \frac{M^2}{\sim}$ be the quotient map and Suppose $x_j,y_j \in \bar{W_j}$ achieve the maximum in the definition of $\lambda_j$ . Since $\frac{M^2}{\sim}$ is flat outside $\frac{S}{\sim}$, any shortest path, $\bar{C}_{x_j,y_j}$ , joining $x_j,y_j$ has to be a straight line. As a result,  $P^{-1}(\bar{C}_{x_j,y_j})$ is either the straight line , $C_{x_j,y_j}$ ,  in $M^2$ joining $x_j,y_j$ or the same straight line union the singular set $S$. And since the metric in $(M^2,g_j)$ is smaller than the flat metric on outside $W_j$ and coincide with the flat metric in $W_j$, we get
\begin{eqnarray}
	\lambda_j &\le& L(\bar{C}_{x_j,y_j}) - L(C_{x_j,y_j}) \\ &\le& \diam_{(M^2 , g_j)}(M^2 \setminus W_j) +  \diam_{\left(\frac{M^2}{\sim} , d \right)} \left(\frac{M^2 \setminus W_j}{\sim} \right).
\end{eqnarray}

Any two points in $M^2 \setminus W_j$ can be joined by a few horizontal segments, whose lengths add up to at most $2/j$ and a segment in $S$ with length less than $\pi/j$ and vertical segments, whose lengths add up to $2/j$ therefore, 
\be
	\diam_{(M^2 , g_j)}(M^2 \setminus W_j) \le \frac{\pi + 4}{j},
\ee
and projecting these segments by  $P$ we get
\be
	 \diam_{\left(\frac{M^2}{\sim} , d \right)} \left(\frac{M^2 \setminus W_j}{\sim}\right) \le 4/j,
\ee
hence,
\be
	\lambda_j \le \frac{\pi + 8}{j}  \to 0    \;\; \text{as} \;\; j \to \infty.
\ee

Now letting $\epsilon = 0$ in Theorem \ref{thm-subdiffeo}, we have a = 0  and
\be
	 \bar{h}_j = h_j = \sqrt{\lambda_j \left( \max \left\{\diam(W_j), \diam \left(\frac{W_j}{\sim} \right) \right\} + \lambda_j/4 \right) \,}  \to 0  \;\; \text{as} \;\;  j \to \infty.
\ee

So we conclude that
\be
d_{GH} \left(  (M^2,g_j) , \left(\frac{M^2}{\sim} , d \right) \right) \le a + 2\bar{h} +
\max\left\{ d^{M^2}_H(W_j, M^2), d^{\frac{M^2}{\sim}}_H \left(\frac{W_j}{\sim}, \frac{M^2}{\sim} \right)\right\} \to 0,
\ee
as $j \to \infty$ and also, it is easy to see that
\begin{eqnarray}
d_{\mathcal{F}}\left(  (M^2,g_j) , \left( \frac{M^2}{\sim} , d \right) \right) &\le&
\left(\bar{h} + a\right) \left(
\vol_2(W_j)+\vol_2 \left(\frac{W_j}{\sim} \right)+\vol_{1}(\partial W_j)+\vol_{1}\left(\frac{\partial W_j}{\sim} \right)\right) \notag \\
&&+\vol_2(M^2 \setminus W_j)	 + \vol_2 \left( \frac{M^2}{\sim} \setminus \frac{W_j}{\sim} \right) \to 0 \;\;\text{as}\;\; j \to \infty.
\end{eqnarray}

As we observed,  
\be
	 \lim_{j \to \infty}\diam_{(M^2,g_j)}(\partial W_j) \le \lim_{j\to\infty}\frac{\pi + 4}{j} = 0,
\ee
but,
\be
 \lim_{j \to \infty}\diam_{(W_j,g_j)}(\partial W_j) \ge \pi.
\ee
\end{proof} 

\subsection{Splines with Positive Scalar Curvature.}

In this section, we will present two examples that demonstrate that in our Theorems, the uniform lower Ricci curvature bound condition can not be replaced by a uniform scalar curvature bound. In the first example we construct a sequence of metrics on the $3$ - sphere which converge to the canonical sphere away from a singular point and also in the intrinsic flat sense but converges to a sphere with an interval attached to it in the Gromov-hausdorff sense. In the second example of this section, we will construct a sequence of metrics with positive scalar curvature which converge to the $3$ - sphere away from a singular point and also in the intrinsic flat sense while having no Gromov-hausdorff limit. The second example was in fact presented by Tom Ilmannen in a talk in 2004 at Columbia without details.  Both examples play an important role in \cite{SorWen2} however the fact that they have positive scalar curvature was never presented in detail in that paper.

\begin{lem}\label{positive-scalar-well}
	For any $L>0$ and $0<\delta<1$, there exists a smooth Riemannian metric on the $3$-sphere with positive scalar curvature which is obtained by properly gluing a spline of length $L + O(\delta^{\frac{1}{2}})$ and width $\le \delta$  to the unit $3$ - sphere.
\end{lem}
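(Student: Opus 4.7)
The plan is to construct the desired metric as a rotationally symmetric warped product $g = dt^{2} + f(t)^{2}\, g_{S^{2}}$ on $S^{3}\cong [0,T]\times S^{2}/{\sim}$, with the $S^{2}$-fiber collapsed at the two endpoints. The profile $f$ will be assembled from four pieces: (A) the main body of the unit sphere, $f(t)=\sin t$ on $[0,t_{0}]$ with $t_{0}:=\pi-\arcsin\delta$, so that $f(t_{0})=\delta$; (B) a short bending transition on $[t_{0},t_{0}+\ell]$ interpolating from sphere to tube; (C) a cylindrical tube $f\equiv f_{\mathrm{tube}}\le\delta$ of length $L$; (D) a round hemispherical cap of radius $f_{\mathrm{tube}}$ closing off the manifold at the far end, contributing length $\pi f_{\mathrm{tube}}/2 = O(\delta)$. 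The spline is the union of (B), (C), (D); its total length is $\ell+L+O(\delta)$ and its maximum width is $f(t_{0})=\delta$. It remains to design (B) with $\ell = O(\sqrt\delta)$ while maintaining positive scalar curvature.

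The scalar curvature is $R = -4f''/f + 2(1-(f')^{2})/f^{2}$, and the substitution $u := f^{2}$ converts $R>0$ into the pointwise differential inequality
\[
u''(t) \;<\; 1 + \frac{u'(t)^{2}}{4\,u(t)}.
\]
On the static pieces (A), (C), (D) this is immediate: sphere gives $R=6$, tube gives $2/f_{\mathrm{tube}}^{2}$, closing cap gives $6/f_{\mathrm{tube}}^{2}$. For (B), introduce the Riccati-type variable $v := u'/\sqrt u = 2f'$, which evolves as $dv/dt = 2f''$ and must satisfy $dv/dt < (1-v^{2}/4)/f$ in the positive scalar curvature regime. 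The transition must drive $v$ from the sphere value $v(t_{0})=-2\sqrt{1-\delta^{2}}$ to the tube value $v(t_{0}+\ell)=0$, a finite change of magnitude $\approx 2$. Integrating the extremal rate with $f$ bounded below by $\delta$ yields $\ell \ge 2\delta\,\operatorname{arctanh}(\sqrt{1-\delta^{2}}) = O(\delta\log(1/\delta))$, and this can essentially be achieved by choosing a profile close to the corresponding extremal solution; since $\delta\log(1/\delta) = o(\sqrt\delta)$, the claimed $\ell = O(\sqrt\delta)$ follows. Smooth matching at the two junctures is obtained by a Hermite interpolant prescribing as many derivatives as needed, followed by a standard mollification supported in an arbitrarily thin neighborhood; openness of the condition $R>0$ ensures the smoothing preserves positivity.

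The main obstacle is that the transition must remain strictly above the critical Euclidean-cone profile $u(\tau)=(\tau-\delta\sqrt{1-\delta^{2}})^{2}$, which is the extremal solution of the ODE $u''=1+u'^{2}/(4u)$ through the prescribed initial data at $t_{0}$ and pinches to zero at $\tau\approx\delta$ (flat $\mathbb{R}^{3}$, $R\equiv 0$). The transition profile must stay above this cone (to keep $f>0$) and satisfy the strict inequality $u''<1+u'^{2}/(4u)$ pointwise; this is accomplished by replacing the critical cone with a slightly blunter relaxed profile, e.g.\ the solution of $u''=(1-\eta)\bigl(1+u'^{2}/(4u)\bigr)$ for a small $\eta=\eta(\delta)>0$, which remains bounded away from zero on $[0,\ell]$ and lands at the cylinder value $f_{\mathrm{tube}}^{2}$, after which a small Hermite correction provides the $C^{\infty}$ matching to the flat cylindrical piece (C).
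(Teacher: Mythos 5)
Your reduction of $R>0$ to the pointwise inequality $u''<1+u'^2/(4u)$ (equivalently $2ff''<1-(f')^2$) and the sphere--bend--cylinder--cap plan are sound, and in spirit this is the same rotationally symmetric warped-product construction as the paper's, which instead prescribes an increasing Hawking mass $m_H$ following Lee--Sormani and reads the profile off as a graph in $\mathbb{E}^4$; the two viewpoints are equivalent because $\frac{d}{dt}\bigl[\tfrac{f}{2}(1-(f')^2)\bigr]=\tfrac14 f'f^2R$. However, the crucial bending step is wrong as written. The borderline solution of $u''=1+u'^2/(4u)$ through the data $(f,f')=(\delta,-\sqrt{1-\delta^2})$ inherited from the unit sphere is not your flat cone $u(\tau)=(\tau-\delta\sqrt{1-\delta^2})^2$ --- that cone does not even satisfy both initial conditions (it gives $u=\delta^2(1-\delta^2)\neq\delta^2$ at the juncture) --- it is the constant-Hawking-mass (Riemannian Schwarzschild) profile $(f')^2=1-\delta^3/f$, which does not pinch to zero but develops a neck at $f=\delta^3$. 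So the "main obstacle" you analyze, staying above a pinching cone, is the wrong barrier.

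More seriously, the claim that the relaxed solution of $u''=(1-\eta)\bigl(1+u'^2/(4u)\bigr)$ "lands at the cylinder value $f_{\mathrm{tube}}^2$" is both unproved and impossible for a freely chosen $f_{\mathrm{tube}}\le\delta$: since $R>0$ and $f'<0$ force the Hawking mass $\tfrac{f}{2}(1-(f')^2)$ to decrease from its initial value $\delta^3/2$, any point with $f'=0$ (which you need in order to attach a cylinder) must occur at radius strictly below $\delta^3$. The tube width is therefore dictated by the ODE, not prescribed. To repair this, note that with $p=(f')^2$ viewed as a function of $f$ your relaxed equation becomes linear, $p=\tfrac{1-\eta}{1+\eta}+c\,f^{-(1+\eta)}$, and one must take $\eta$ small compared to $\delta^2$ so that $c<0$ and a neck $p=0$ actually exists; the neck radius (of order $\delta^3$) is then your $f_{\mathrm{tube}}$, and the arclength of the bend comes out $O(\delta)$. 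Your length bookkeeping is also garbled (during the bend $f\le\delta$, not $f\ge\delta$, and what is needed is an upper bound on $\ell$ along the chosen profile), although the conclusion $\ell=O(\sqrt{\delta})$ survives. With these corrections your direct ODE route does prove the lemma; the paper's prescription of an increasing $m_H$ sidesteps these issues, since monotonicity of $m_H$ encodes $R>0$ and automatically produces the $\delta^3$-scale of the thin part.
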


\begin{example}\label{ex-not-GH-scalar}
There are metrics $g_j$ on the sphere $M^3$
with positive scalar curvature such that
$M_j = (S^3, g_j)$ converge smoothly away from a point singularity
$S=\left\{p_0\right\}$ to the sphere, $S^3$, with $\diam(M_j) \le \pi + L + 2 $ and such that
    \be
            d_{\mathcal{F}}\left( M_j , S^3 \right) \; , \; d_{s\mathcal{F}}\left( M_j , S^3 \right) \to 0,
    \ee
and,
    \be
            d_{GH}\left( M_j , M_0 \right) \to 0,
    \ee
where $M_0 = S^3 \sqcup [0 , L]$ (the round sphere with an interval of length $L$ attached to it).
\end{example}

\begin{rmrk}\label{Ricci-bound-sharp}
	Example \ref{ex-not-GH-scalar} demonstrates that the uniform lower Ricci curvature bound condition in Theorem \ref{Ricci-thm-improved} and \ref{Ricci-diam-thm} can not be replaced by a uniform lower bound on the scalar curvature.
\end{rmrk}

\begin{example}\label{ex-many-tips-scalar}
There are metrics $g_j$ on the sphere $M^3$
with positive scalar curvature  such that
$M_j = (S^3, g_j)$ converge smoothly away from a point singularity
$S=\left\{p_0\right\}$ to the sphere, $S^3$, with $\diam(M_j) \le \pi + L + 2 $ and such that
    \be
            d_{\mathcal{F}}\left( M_j , S^3 \right) \; , \; d_{s\mathcal{F}}\left( M_j , S^3 \right) \to 0,
    \ee
and there is no Gromov-Hausdorff limit.
\end{example}

\begin{proof} of Lemma \ref{positive-scalar-well}.
The goal here is to attach a spline of finite length and arbitrary small width to a sphere whith positive scalar curvature. For this, we need to employ some ideas related to the Mass of rotationally symmetric manifolds. (c.f. ~\cite{LeeSormani1}). The construction goes as follows; we first find an admissible Hawking mass function (c.f. ~\cite{LeeSormani1} ) which will provide us with a three manifold embedded in $\E^4$ which is a hemisphere to which spline of finite length and small width is attached; we then, attach a hemisphere along its boundary.

Let $\delta < 1$ (this later will become the width of the spline). and let $r_{min} = 0$. Now we take an admissible Hawking mass function, $m_H(r)$ (which has to be smooth and increasing) that satisfies ($\epsilon$ to be determined later)
    \be
        m_H(r) =  r(1 - \epsilon^2)/2  \;  \text{for} \; r \in [0 , \delta^3],
    \ee
and,
    \be
        m_H(r) = r^3/2  \;  \text{for} \; r \in [\delta , 1].
    \ee
    
As in ~\cite{LeeSormani1}, define the function $z(r)$ via
   \be
        z (\bar{r}) = \int_{r_{min}}^{\bar{r}} \; \sqrt{\frac{2m_H(r)}{r - 2m_H(r)}} \; \mathrm{d}r.
   \ee
Note that $z$ depend on $\delta$.

$z(r)$ is unique up to a constant and gives our desired three manifold as a graph over $\E^3$. By our choice of $m_H(r)$ we get,
    \be \label{eq-thin-well}
        z' (r) = \sqrt{\frac{1 - \epsilon^2}{\epsilon^2}} \; \text{for} \; r \in [0 , \delta^3],
    \ee
and,
    \be \label{eq-neck}
        z' (r) = \sqrt{\frac{r^2}{1 -r^2}} \; \text{for} \; r \in [\delta , 1],
    \ee
and, since
     \be
             \frac{\delta^3}{2} (1 -\epsilon^2)   \le m_H(r) \le \delta^3/2 \; \text{for} \; r \in [\delta^3 , \delta],
     \ee
one obtains
    \be
         \sqrt{\frac{\delta^3 (1 - \epsilon^2)}{r - \delta^3 (1 - \epsilon^2)}}  \le  z' (r) \le \sqrt{\frac{\delta^3}{r - \delta^3}} \; \text{for} \; r \in [\delta^3 , \delta].
    \ee

Now, choose the $\epsilon$ that solves
   \be
        \delta^3 \sqrt{\frac{1 - \epsilon^2}{\epsilon^2}} = L,
   \ee
   
For some fixed $L$. From (\ref{eq-thin-well}), we have,
   \begin{eqnarray}
             \bar{L}(\delta) = z(\delta) - z(\delta^3)  \le \int_\delta^{\delta^3} \; \sqrt{\frac{\delta^3}{r - \delta^3}} = 2 \delta^{3/2} \left( \delta - \delta^3 \right)^{1/2} < 2,
   \end{eqnarray}
which goes to $0$ as $\delta$ goes to $0$.

We also get
   \be
             z(\delta^3) - z(0)  = \delta^3 \sqrt{\frac{1 - \epsilon^2}{\epsilon^2}} = L.
   \ee
   
The metric in terms of the distance from the pole, can be written as
    \be
        \bar{g}_\delta = ds^2 + f^2(s) g_{S^2} = (1 + [z'(r)]^2)dr^2 + r^2 g_{S^2}.
    \ee
    
In the virtue of the Theorem 5.4 in ~\cite{LeeSormani1}, we know that when $r \in [\delta , 1]$, we are on a unit sphere, and since
   \be
        \lim_{r \to 1^-} z'(r) = \infty,
   \ee
we have
   \be
        \lim_{r \to 1^-} f'(s) =   \lim_{r \to 1^-} r'(s) = \lim_{r \to 1^-} \frac{1}{\sqrt{1 + [z'(r)]^2}} = 0.
   \ee
   
Therefore, the boundary $r = 1$ is in fact a great $2$-sphere along which we can smoothly attach a $3$ - hemisphere. as follows

So far we have got the metric
  \be
       \bar{g}_\delta = (1 + [z'(r)]^2)dr^2 + r^2 g_{S^2} \;  \text{for} \;  r \in [0,1].
  \ee
  
Letting $r = \sin (\rho)$, one sees that
  \be
       \bar{g}_\delta = (1 + [z_\delta'(\sin(\rho))]^2) \cos^2 (\rho )d\rho^2 + \sin^2 (\rho) g_{S^2} \;  \text{for} \;  \rho \in [0, \pi/2].
  \ee
  
Therefore, on the sphere we define $g_\delta$ to be
   \be
(1 + [z_\delta'(\sin(\rho))]^2) \cos^2 (\rho )d\rho^2 + \sin^2 (\rho) g_{S^2} \;  \text{for} \;  \rho \in [0, \pi/2],
   \ee
and,
    \be
              d\rho^2 + \sin^2 (\rho) g_{S^2} \;  \text{for} \;  \rho \in [\pi/2, \pi],
    \ee
which has positive scalar curvature when $\rho \le \pi/2$ because it is isometric to $\bar{g}_\delta$ and has positive scalar curvature when $\rho \ge \pi/2$ because it is isometric to a round hemisphere. $g_\delta$ is smooth at $\rho = \pi/2$ because by ~\ref{eq-neck} near $\rho = \pi/2$,
     \be
            z_\delta' (\sin(\rho)) = \sqrt{\frac{\sin^2 (\rho)}{1 - \sin^2(\rho)}} = \tan(\rho).
     \ee
     
So,
    \begin{eqnarray}
     (1 + [z_\delta'(\sin(\rho))]^2) \cos^2 (\rho ) d\rho^2 + \sin^2 (\rho) g_{S^2} &=& (1 + \tan^2(\rho)) \cos^2 (\rho) + \sin^2 (\rho) g_{S^2}  \notag \\ &=& d\rho^2 + \sin^2(\rho) g_{S^2}.
    \end{eqnarray}
    
The key idea is that, using this method, one can attach symmetric spline of length $L + \bar{L}(\delta)$ and arbitrary small width $\delta < 1$ to a sphere while keeping the scalar curvature positive and $\diam(M_j) \le \pi + L + 2$. And the metric found can actually be written as a warped metric.

\end{proof}

\begin{proof} of Example \ref{ex-not-GH-scalar}.
Now let $\delta_j \to 0$ and take the sequence $M_j = (S^3 , g_{\delta_j})$ , where $g_{\delta_j}$ is given by the above construction for $\delta_j$. we are going to prove that $M_j$ converges to $M_0$ in Gromov-Hausdorff sense where $M_0$ is the unit three sphere to which an interval of length $L$ is attached; and $M_j$ converges to $S^3$ in intrinsic flat sense.

First notice that $M_j$ contains a subdomain $U_j$ which is isometric to $ U'_j = S^3 \setminus B_p (\arcsin( \delta_j))$ also letting $V_j = M_j \setminus U_j$ and $V'_j = S^3 \setminus U'_j$ one observes that since
    \begin{eqnarray}
            \vol(V_j) &\le& \int_0^\delta \; (4\pi r^2) (1 + [z'(r)]^2)^{1/2} \; \mathrm{d}r \notag \\ &\le& \int_0^\delta \; (4\pi r^2) (1 + |z'(r)|)\mathrm{d}r \\  &\le& (4\pi \delta^2) \left( \delta + L + \bar{L}(\delta) \right), \notag
    \end{eqnarray}
one gets $\vol (V_j) \to 0$ as $\delta_j \to 0$. Also it is obvious that $\vol(V'_j) \to 0$ as $\delta_j \to 0$.

Now, to be able to use Theorem ~\ref{thm-subdiffeo}, we need an estimate on
\be
    \lambda_j=\sup_{x,y \in U_j}
|d_{M_j}(x,y) - d_{S^3}(x,y)|.
\ee

Let $x,y \in U_j$ and let $\gamma$ and $c_{x,y}$ be the minimizing geodesic connecting $x$ and $y$ in $(M_j , g_j)$ and $S^3$ (resp.). If $\gamma$ lies completely in $U_j$, then, so does $c_{x,y}$ and $\gamma = c_{x,y}$ hence, $d_{M_j}(x,y) = d_{S^3}(x,y)$. If $\gamma \not \subset U_j$, therefore $\gamma = \gamma_1 + \gamma_2 + \gamma_3$ where $x \in \gamma_1$, $y \in \gamma_3$ and $\gamma_1, \gamma_3 \subset U_j$ and $\gamma_2 \subset V_j$ .  We are in either of the following cases

Case I: $c_{x,y} \subset U_j$

Obviously $L(\gamma_2) \le 2\pi\delta_j$, also we have
\be
    |d_{S^3}(x,p) - L(\gamma_1)| \le  \arcsin{\delta_j},
\ee
and
\be
     |d_{S^3}(y,p) - L(\gamma_3)| \le  \arcsin{\delta_j}.
\ee

Since $\delta_j \to 0$, for $j$ large enough,
\be
    L(\gamma) \approx d_{S^3}(x,p) + d_{S^3}(y,p)  > L(c_{x,y}),
\ee
which is a contradiction.

Case II: $c_{x,y} \not \subset U_j$.

Let $c_{x,y} = c_1 + c_2 + c_3$ where $x \in c_1$, $y \in c_3$ and $c_1, c_3 \subset U_j$ and $c_2 \subset V'_j$, Then, $L(c_2) \le 2 \arcsin(\delta_j)$ and also
\be
    |L(\gamma_i) - L(c_i)| \le 2\arcsin(\delta_j).
\ee

Therefore,
\be
    |d_{M_j}(x,y) - d_{S^3}(x,y)|=|L(\gamma) - L(c_{x,y})| \le 4\arcsin(\delta_j) + 2 \pi \delta_j.
\ee

This argument shows that $\lambda_j \to 0$ as $j \to \infty$. Since the intrinsic diameter $D_{U_j} \le \pi$ (both in $M_j$ and $S^3$), $h_j$ in Theorem ~\ref{thm-subdiffeo} goes to $0$ as $j \to \infty$. Letting $\epsilon = 0$ in  Theorem ~\ref{thm-subdiffeo} we get $a=0$ and $\bar{h}_j = h_j$ therefore,
   \be
        d_{\mathcal{F}} (M_j , S^3) \le \bar{h_j} \left( 2 \vol(U_j) + 2 \vol(\partial U_j)\right)    + \vol(V_j) + \vol(V'_j).
   \ee
which gives
        \be
             d_{\mathcal{F}} (M_j , S^3) \to 0   \; \text{as} \; j \to \infty.
        \ee

To prove that $M_j$ converges to $M_0$ in Gromov-hausdorff sense, we will estimate $d_{GH}(M_j , M_0)$ using the fact that
        \be
                d_{GH}(M_j,M_0) = \frac{1}{2} \inf_{\mathfrak{R}}\left( \operatorname{dis} \mathfrak{R} \right),
        \ee
where, the infimum is over all correspondences $\mathfrak{R}$ between $M_j$ and $M_0$ and $\operatorname{dis} \mathfrak{R}$ is the distortion of $\mathfrak{R}$ given by
       \be
                \operatorname{dis} \mathfrak{R} = \sup \left\{ |d_{M_j}(x , x')| - |d_{M_0} (y , y')| : (x,y) , (x',y') \in \mathfrak{R}\right\}.
       \ee
       
For details see ~\cite[p. 257]{BBI}.

We need to find correspondences $\mathfrak{R}_j$ between $M_j$ and $M_0$ such that $\operatorname{dis} \mathfrak{R}_j  \to 0$ as $j \to \infty$. Consider $W_j \subset \E^4$ given by
       \be
             W_j = M_j \cup M_0 = M_0  \cup V_j  = M_j \cup \left( M_0 \setminus U_j \right).
       \ee
       
In fact, we can picture $W_j$ as the union of sphere, an interval of length $L$ and a spline of length $L + \bar{L}(\delta_j)$ around the spline. Then we have, $M_j \subset W_j$ and $M_0 \subset W_j$ define the following surjective maps $f : W_j \to M_j$ and $ g: W_j \to M_0$
        \be
                f |_{M_j} = \operatorname{id}.
        \ee
        \be
                f (w) = (r \left(z(w) \right), 0 , 0 , z(y)) \in V_j  \; \text{for} \; w \in \left( M_0 \setminus U_j \right).
        \ee
        
Note that when $w \in \left( M_0 \setminus U_j \right)$, $f(w)$ is the point in $V_j \cap xz-\text{plane}$ closest to $w$ .

Similarly let
      \be
                g |_{M_0} = \operatorname{id}.
        \ee
        \be
                g (w) = \text{the point in $\left( M_0 \setminus U_j \right)$ closest to $w$}    \; \text{for} \; w \in V_j.
        \ee
        
Let $\mathfrak{R}_j$ be the following  correspondence between $M_j$ and $M_0$,
        \be
                \mathfrak{R}_j = \{ \left( f(w) , g(w) \right) : w \in W_j \}.
        \ee
        
\textbf{Claim:} $\operatorname{dis} \mathfrak{R}_j \to 0$ as $ j \to \infty $.

    Pick $w_1 , w_2 \in W_j$, and suppose $\gamma + \lambda$ is the minimal geodesic in $M_j$ connecting $f(w_1)$ and $f(w_2)$ where $\gamma \subset U_j$ and $\lambda \subset V_j$. and let $\gamma' + \lambda'$ be the (possibly) broken minimal geodesic connecting $g(w_1)$ and $g(w_2)$ where, $\gamma' \subset U_j$ and $\lambda' \subset \left( M_0 \setminus U_j \right)$. Without loss of generality we assume that $z(w_1) \le z(w_2)$ . Next we need estimates on the lengths of $\gamma , \lambda , \gamma' , \lambda'$. Let $q$ and $q'$ be starting points on $\lambda$ and $\lambda'$ respectively, then
        \be\label{eq-claim-1}
               \int_{q}^{f(w_2)} \; \mathrm{d}z \le \int_{q}^{f(w_2)} \; s'(z) \; \mathrm{d}z \le L(\lambda) \le  \int_{q}^{f(w_2)} \; s'(z) \; \mathrm{d}z + 2\pi \delta_j,
        \ee
where, the term $2\pi \delta_j$ is the maximum perimeter of the well and note that any two point on the we can be joined by a radial geodesic followed by a curve of length less than  $2\pi \delta_j$.

Since $ds^2 = \left( 1  + [r'(z)]^2 \right) dz^2$ we get
        \begin{eqnarray}
                 \int_{q}^{f(w_2)} \; s'(z) \; \mathrm{d}z &\le& \int_{q}^{f(w_2)} \mathrm{d}z + \int_{q}^{f(w_2)} |r'(z)| \mathrm{d}z \\ &\le& \int_{q}^{f(w_2)} \mathrm{d}z + \delta_j \left(L + \bar{L}(\delta_j)\right). \notag
        \end{eqnarray}
        
We also have
        \begin{eqnarray}
               \int_{q'}^{g(w_2)} \; \mathrm{d}z \le L(\lambda') \le  \int_{q'}^{g(w_2)} \; \mathrm{d}z + 2\arcsin(\delta_j).
        \end{eqnarray}
        
The last inequality comes from the fact that any two points in $M_0 \setminus U_j$  can be joined by a (broken) geodesic which is a straight line followed by a curve of length at most $\diam(V'_j) = 2\arcsin(\delta_j)$ .

On the other hand by our construction
        \be
                |  \int_{q}^{q'} \; \mathrm{d}z | \le \bar{L}(\delta_j),
        \ee
and,
        \be
                |  \int_{f(w_2)}^{g(w_2)} \; \mathrm{d}z | \le \bar{L}(\delta_j).
        \ee
        
Therefore,
  \be\label{eq-claim-2}
                |  \int_{q}^{f(w_2)} \; \mathrm{d}z  -  \int_{q'}^{g(w_2)} \; \mathrm{d}z | \le 2 \bar{L}(\delta_j).
        \ee
        
From   \ref{eq-claim-1} - \ref{eq-claim-2}, we get
        \be\label{eq-claim-3}
                | L(\lambda) - L(\lambda') | \le \delta_j \left(L + \bar{L}(\delta_j) + 2\pi\right) + 2\bar{L}(\delta_j) + 2\arcsin(\delta_j).
        \ee
        
Also one observes that when $w_1 \in U_j$, then $\gamma$ and $\gamma'$ are geodesics on the sphere starting from the same point and ending up in $V'_j$ which means that
        \be\label{eq-claim-4}
               | L(\gamma) - L(\gamma') | \le \diam(V'_j) = 2 \arcsin(\delta_j).
        \ee
        
From (\ref{eq-claim-3}) and (\ref{eq-claim-4}),
        \begin{eqnarray}
               |d_{M_j} \left(f(w_1) , f(w_2) \right) - d_{M_0} \left(g(w_1) , g(w_2) \right)| &\le& | L(\gamma) - L(\gamma') | + | L(\lambda) - L(\lambda') | \notag \\ &\le& \delta_j \left(L + \bar{L}(\delta_j) + 2\pi\right) \\ &&+ 2\bar{L}(\delta_j) + 4\arcsin(\delta_j). \notag
        \end{eqnarray}
        
Therefore,
        \be
                \operatorname{dis} \mathfrak{R}_j \le \delta_j \left(L + \bar{L}(\delta_j) + 2\pi\right) + 2\bar{L}(\delta_j) + 4\arcsin(\delta_j),
        \ee
which shows that  $ \operatorname{dis} \mathfrak{R}_j \to 0 $ as $ j \to \infty$ .   This completes the proof of the claim.

To prove that the convergence off the singular set $S = \lbrace p_0 \rbrace$, which is the bottom of the well, let $\rho_0 > 0$, then
        \be
             g_{\delta_j}  \to g_{S^3} \; \text{on} \; \rho^{-1} \left( [\rho_0 , \pi] \right),
        \ee
because for $j$ sufficiently large, $\delta_j < \rho_0$, which by our construction means that $g_{\delta_j} = g_{S^3}$ on $\rho^{-1} \left( [\rho_0 , \pi] \right)$.
\end{proof}

\begin{proof} of Example \ref{ex-many-tips-scalar}.
Let $g (p_0 , s)$ denote a symmetric spline of length L centered at the point $p_0$ with width $s$ (as constructed in the previous examples). Also fix a great circle and a point $0$ in $S^2$, so now, when we say a point given by the angle $\theta$, it means a point on this great circle given by the angle $\theta$. On the round sphere, $r = \pi - \frac{1}{2^j}$ is a $2$-sphere with radius $\sin(\pi - \frac{1}{2^j})$, Therefore, balls with radius $ s_j = \frac{1}{j} \sin(\pi - \frac{1}{2^j}) \sqrt{2-2\cos(\frac{2\pi}{2^j})}$ centered at points $p_k$ given by the angle $\theta=\frac{2k\pi}{2^j}$ are disjoint. Now for each $j$, we can glue metrics $g(p_k , s_j )$ which agree with the metric on the spline given
in Example~\ref{ex-not-GH-scalar}.   Outside of each $B_{p_k}(s_j)$, we
set $g_j=g_0$.  It is easy to see that by our construction, $g_j$ agrees with the round metric for $r < \pi - \frac{1}{2^j} - s_j$ and has $2^j$ splines of length $L$ and width $s_j$ and also the volume of the non spherical part is going to $0$ as $j \to \infty$. So by taking $U_j = r^{-1}([0 , \pi - \frac{1}{2^j} - s_j ])$, we see that again all conditions in Theorem ~\ref{flat-to-settled} are satisfied therefore we have the flat convergence to the settled completion. 
\end{proof}

%=================================================================%
%======================= SECTION: APPLICATIONS  =====================%
%=================================================================%

\section{Applications}\label{sec-applications}

\subsection{Background}

In this section, we will briefly outline one the applications of our Theorems which is proving a conjecture of Candelas and de la Ossa for conifold flops and transitions. Different versions of this conjecture have been proved in \cite{Rong-Zhang} and \cite{Song-Candelas-conj}. Both their proofs require advanced PDE methods and Nash-Moser iteration to get uniform $C^{k,\alpha}$ bouds on the potential functions. Our approach on the other hand only relies on our main theorems. 

Let $M_0$ be a singular normal projective variety of dimension $n$ with singular set $S$. One way to disingularize $M_0$ is called the \emph{resolution} of singularities of $M_0$ the defintition of which is as follows:

\begin{defn}[Resolution]
	A pair $\left( \bar{M} , \bar{\pi} \right)$ of a projective manifold $\bar{M}$ and a morphism $\bar{\pi} : \bar{M} \to M_0$ is called a resolution of $M_0$ if $\bar{\pi}: \bar{M}\setminus \bar{\pi}^{-1}(S) \to M_0 \setminus S$ is biholomorphic.
\end{defn}

Another usual method of disingularization is called \emph{smoothing} and defined as follows

\begin{defn}[Smoothing]
 	Let $D \subset \C$ be the unit disc. A pair $\left( \mathcal{M} , \pi \right)$ of a projective $n+1-$dimensional manifold $\mathcal{M}$ and proper flat morphism $\pi : \mathcal{M} \to D$  is called a smoothing if $\pi^{-1}(0) = M_0$ and $\pi^{-1}(t) = M_t$ is a smooth projective $n-$dimensional manifold for $t \in D\setminus\{0\}$.
\end{defn}

\begin{defn}\label{defn-conifold}[Conifold]
	A normal variety $M_0$ with $\dim_{\C}(M_0) = n$ is called a conifold if its singularities are all ordinary double points i.e. any singular point is locally given by 
	\be
		z_0^2 + z_1^2 + \dots + z_n^2 = 0.
	\ee
\end{defn}

\begin{defn}[Extremal and Conifold Transitions]
If $\left( \bar{M} , \bar{\pi} \right)$ and $\left( \mathcal{M} , \pi \right)$ are a resolution and smoothing of $M_0$ respectively, then, the diagram
\be
	\bar{M} \to M_0 \leadsto M_t   \;\;\;  (t \neq 0),
\ee
denotes the process of going from $\bar{M}$ to $M_t$ and is called an extremal transition. Whenever $M_0$ is a conifold, the transition is called a conifold transition.
\end{defn}

\begin{defn}[Flops]
	If $\left( \bar{M}_1 , \bar{\pi}_1 \right)$ and $\left( \bar{M}_2 , \bar{\pi}_2 \right)$ are two resolutions of $M_0$, then the process of going from $\bar{M}_1$ to $\bar{M}_2$ is called a flop and is denoted by the following diagram
	\be
		\bar{M}_1 \to M_0 \dashrightarrow \bar{M}_2.
	\ee
If $M_0$ is a conifold, then the flop is called a conifold flop. 
\end{defn}

Here, we will mention a mathematical formulation of the Candelas and de la Ossa’s conjecture due to Rong-Zhang~\cite{Rong-Zhang}:

\begin{conj}[Candelas - de la Ossa]\label{conj-candelas}
	Let $M_0$ be a singular $n-$dimensional normal variety then,\\
	
	\textbf{(i)} Extremal transitions $\bar{M} \to M_0 \leadsto M_t   \;\;\;  (t \neq 0)$ are continuous with respect to the Gromov-Hausdorff distance i.e. there exist families of Ricci-flat K\"{a}ler metics $\bar{g}_s$ on $\bar{M}$ and $g_t$ on $M_t$ and compact metric space $\left( X , d_X \right)$ such that
	\be
		\left( \bar{M} , \bar{g}_s \right) \xrightarrow{G-H} \left( X , d_X \right)\xleftarrow{G-H} \left( M_t , g_t\right)
	\ee\\
	
	\textbf{(ii)}  Flops $\bar{M}_1 \to M_0 \dashrightarrow \bar{M}_2$ are continuous with respect to the Gromov-Hausdorff distance i.e. there families of Ricci-flat K\"{a}hler metrics $\bar{g}_{i,s}$ and a compact metric space $\left( X , d_X \right) $ such that
	
	\be
		\left( \bar{M}_1 , \bar{g}_{1,s} \right) \xrightarrow{G-H} \left( X , d_X \right)\xleftarrow{G-H} \left( \bar{M}_2 , \bar{g}_{2,s}\right)
	\ee
\end{conj}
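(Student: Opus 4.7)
The plan is to reduce Theorem~\ref{thm-Candelas} directly to Theorem~\ref{Ricci-thm-improved} by feeding in the analytic convergence and diameter results of Rong-Zhang \cite{Rong-Zhang} and Tosatti \cite{Tosatti-KE} as verification of the hypotheses. Since by definition a Calabi-Yau conifold carries Ricci-flat K\"ahler metrics, every approximating metric $\bar g_s$ or $g_t$ satisfies $\Ricci \equiv 0$, so the uniform lower Ricci bound $(n-1)H g$ of Theorem~\ref{Ricci-thm-improved} holds with $H = 0$. Moreover, since $M_0$ has only isolated ordinary double points, the singular set $S$ is a finite collection of points in the real $2n$-dimensional variety, so trivially $\mathcal{H}^{2n-1}(S) = 0$, fulfilling the Hausdorff measure hypothesis.

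First, for part (i), I would invoke Tosatti's and Rong-Zhang's work to get, on any compact $K \Subset \bar M \setminus \bar\pi^{-1}(S)$ (respectively $K \Subset M_t \setminus \Sigma_t$ for the vanishing cycle $\Sigma_t$), smooth $C^{k,\alpha}$ convergence $\bar g_s \to g_0$ (resp.\ $g_t \to g_0$), where $g_0$ is the Ricci-flat K\"ahler metric on $M_0 \setminus S$. Rong-Zhang also supply a uniform diameter bound $\diam(\bar M, \bar g_s),\, \diam(M_t, g_t) \le D_0$ and a uniform volume bound. I would then build the exhaustion $W_j$ of $M_0 \setminus S$ by removing small metric balls around each conifold point: $W_j = M_0 \setminus \bigcup_{p\in S} B_p(1/j)$, and pull this back via $\bar\pi^{-1}$ (resp.\ via the smoothing map on $M_t$) to produce exhaustions on $\bar M$ and $M_t$. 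Because these exhaustions are defined intrinsically on $M_0\setminus S$, the two sides of the transition will share the same limit model.

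The three volume conditions are now checked using the local cone model for the ordinary double point. Near a conifold singularity the Ricci-flat K\"ahler metric is asymptotic (by the local analysis in \cite{Tosatti-KE,Rong-Zhang}) to the Stenzel-type cone metric on the smoothing side and to the small resolution K\"ahler metric on the resolution side; both have standard $2n$-dimensional cone asymptotics. This gives $\vol_{g}(M\setminus W_j) = O(j^{-2n})\to 0$ and $\vol_{g}(\partial W_j) = O(j^{-(2n-1)})$ uniformly in $s$ or $t$, so with a slight rescaling of the radii one obtains the uniform bound $\vol_g(\partial W_j) \le A_0$ and the vanishing of $V_j$ required by (\ref{m-edge-volume}). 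The connectedness of the exhaustion is automatic since $S$ is a finite set and $M_0\setminus S$ is connected for $n \ge 2$. Applying Theorem~\ref{Ricci-thm-improved} to the two families then yields
\[
(\bar M,\bar g_s) \xrightarrow{GH} N \xleftarrow{GH} (M_t, g_t),
\]
where $N$ is the common metric completion of $(M_0\setminus S, g_0)$; setting $(X, d_X) = N$ proves (i). For (ii), apply the very same argument separately to each resolution $\bar M_1$ and $\bar M_2$, noting that their Ricci-flat K\"ahler metrics converge smoothly to the same $g_0$ on $M_0\setminus S$, hence to the same completion $N$.

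The main obstacle is producing the uniform control of $\vol_g(\partial W_j)$ along the entire family (in $s$, respectively $t$), since the natural exhaustion radius is measured with respect to a metric that itself varies. What makes this tractable is precisely the strong convergence results of Tosatti and Rong-Zhang: away from the singularities the metrics converge smoothly, so distances and areas on $\partial W_j$ are essentially independent of the parameter for $j$ fixed, while near the singularities the known asymptotic model supplies the required power-law decay uniformly. Once these uniform geometric bounds are in place, the delicate Gromov-Hausdorff identification is entirely handled by Theorem~\ref{Ricci-thm-improved}, avoiding the $C^{k,\alpha}$ potential estimates used in the earlier proofs.
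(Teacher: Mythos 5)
Your reduction is essentially the paper's: both arguments feed the smooth convergence away from $S$ and the diameter bounds of Tosatti (Proposition~\ref{prop-resolution-convergence}) and Rong--Zhang (Proposition~\ref{prop-smoothing-convergence}) into Theorem~\ref{Ricci-thm-improved} (the paper also invokes Theorem~\ref{diam-thm}), with Ricci-flatness supplying the lower Ricci bound, the finiteness of the set of ordinary double points giving the vanishing of the codimension-one Hausdorff measure of $S$, the exhaustion obtained by deleting small metric balls around the singular points, and part (ii) handled by running the same argument for each of the two resolutions against the common limit $(M_0\setminus S, g_0)$. The one place you genuinely diverge is the verification of the volume hypotheses on $\partial W_j$ and $M\setminus W_j$: you appeal to the asymptotically conical local models (Stenzel-type metrics on the smoothings, resolved-conifold metrics on the resolution) to get power-law volume decay, and you yourself flag that the uniformity of these asymptotics in $s$ and $t$ is the delicate point of your argument. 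The paper instead uses the Bishop--Gromov relative volume comparison (Theorem~\ref{thm-Bishop-Gromov}) together with the Euclidean volume-ratio expansion (Proposition~\ref{prop-volume-ratio}): since all the metrics are Ricci-flat, the volumes of small metric balls and of their boundary spheres are controlled by their Euclidean counterparts uniformly in the family parameter, with no input whatsoever about the refined cone structure near the singularities. This is precisely the coarse-geometric shortcut the paper advertises, and it buys you the required uniformity for free; your route works only if you can cite or prove uniform cone asymptotics along the entire degenerating family, which is a strictly stronger (and PDE-flavored) input than anything Theorem~\ref{Ricci-thm-improved} actually needs. If you replace that step by the comparison-theorem argument, your proof coincides with the paper's; note also that both treatments pass equally lightly over the identification of the exhaustion $W_j$ inside $\bar M$ and $M_t$, whose underlying smooth manifolds differ from $M_0$.
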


%%%%%%%%%%%%%%%%%%%%%%%%%%%%%%%%%%%%%%%%%

\subsection{Generalized Calabi-Yau Theorem}

Eyssidieux-Guedj-Zeriahi in \cite{EGZ} prove a generalized Clabi-Yau theorem which goes as follows:

\begin{prop}[Generalized Calabi-Yau Theorem]
	Let $M_0$ be a Calabi-Yau variety, then to any ample line bundle $\mathcal{L}_0$ there corresponds a unique Ricci-flat K\"{a}hler form $\omega \in c_1(\mathcal{L}_0)$.
\end{prop}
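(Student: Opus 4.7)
The plan is to reduce the existence of a Ricci-flat representative in $c_1(\mathcal{L}_0)$ to solving a degenerate complex Monge-Amp\`ere equation on the (possibly singular) variety $M_0$, following the pluripotential-theoretic framework of Bedford-Taylor and Kolodziej. First I would fix a smooth positive $(1,1)$-form $\omega_0 \in c_1(\mathcal{L}_0)$ on $M_0$ (smooth in the sense of restricting to a K\"ahler form on the regular part $M_0 \setminus S$ and extending as a form with local K\"ahler potentials near $S$, which exists because $\mathcal{L}_0$ is ample). Since $M_0$ is Calabi-Yau, the canonical bundle is trivial and there is a nowhere-vanishing holomorphic $n$-form $\Omega$ on $M_0 \setminus S$; after normalization, I would seek $\omega = \omega_0 + i\partial\bar\partial\varphi$ satisfying
\be
(\omega_0 + i\partial\bar\partial\varphi)^n = c\, \Omega \wedge \bar\Omega
\ee
with $c>0$ chosen so the total masses match. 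On the regular part, $\Omega \wedge \bar\Omega$ is a smooth volume form and a solution of this Monge-Amp\`ere equation is automatically Ricci-flat; near $S$ the measure $\Omega \wedge \bar\Omega$ may blow up, so the equation is genuinely degenerate.

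The second step is to produce a solution $\varphi$ in the class $PSH(M_0,\omega_0) \cap L^\infty(M_0)$ of bounded $\omega_0$-plurisubharmonic functions. For this I would verify that the right-hand side, viewed as a Borel measure $\mu$ on $M_0$, has finite total mass equal to $\int_{M_0}\omega_0^n$ and belongs to a Kolodziej-type capacity class (e.g.\ $\mu$ is dominated by $A\,\mathrm{Cap}^{1+\epsilon}$ on Borel sets, or equivalently has density in $L^p$ for some $p>1$ with respect to a smooth volume form on an ambient embedding). The key analytic input is Kolodziej's $L^\infty$-estimate together with its extension to normal K\"ahler spaces by Eyssidieux-Guedj-Zeriahi; these yield both existence (via approximation by non-degenerate Monge-Amp\`ere problems on desingularizations or by smoothing of $\mu$) and a uniform $C^0$-bound on $\varphi$.

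Uniqueness I would obtain from the standard pluripotential comparison principle: if $\varphi_1,\varphi_2$ are two bounded solutions, Bedford-Taylor's comparison principle applied on $M_0 \setminus S$ (or on a resolution, pushing everything down) forces $\sup(\varphi_1-\varphi_2) = \sup(\varphi_2 - \varphi_1)$, hence $\varphi_1 = \varphi_2 + \text{const}$, and the normalization pins down the constant. Finally, smoothness of $\omega = \omega_0 + i\partial\bar\partial\varphi$ on $M_0 \setminus S$ follows from standard elliptic regularity for the non-degenerate Monge-Amp\`ere equation once $\varphi$ is known to be bounded, using Yau's $C^2$ and $C^3$ a priori estimates locally on the smooth part and then Evans-Krylov / Schauder bootstrapping.

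The main obstacle I would expect is the $L^\infty$-estimate: controlling $\|\varphi\|_\infty$ uniformly on a singular space when the reference measure degenerates requires genuinely non-trivial pluripotential machinery (capacity estimates on normal analytic spaces and the Kolodziej iteration scheme adapted to this setting). Everything else—set-up, uniqueness via comparison, and smoothness on $M_0 \setminus S$—is relatively standard once a bounded solution is in hand.
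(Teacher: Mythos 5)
Your proposal is essentially the proof of the cited result: the paper does not prove this proposition itself but quotes it from Eyssidieux--Guedj--Zeriahi, and their argument is exactly the pluripotential route you sketch — reduction to a degenerate complex Monge--Amp\`ere equation $(\omega_0+i\partial\bar\partial\varphi)^n = c\,\Omega\wedge\bar\Omega$, existence and the $L^\infty$-bound via Kolodziej-type capacity estimates extended to normal K\"ahler spaces, uniqueness by the comparison principle, and smoothness on $M_0\setminus S$ by Yau's local estimates and bootstrapping. So your approach matches the one the paper relies on, and no gap needs to be addressed.
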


%%%%%%%%%%%%%%%%%%%%%%%%%%%%%%%%%%%%%%%%%

\subsection{Smooth Convergence and Diameter Bounds}

Here, we will briefly mention two theorems that establish the smooth convergence away from the singular set $S$. 

Tosatti in \cite{Tosatti-KE} proved a convergence result for Calabi-Yau metrics which can be paraphrased as follows:

\begin{prop}[Tossati\cite{Tosatti-KE}]\label{prop-resolution-convergence}
	Suppose the Calabi-Yau variety $M_0$ admits a crepant rsolution $\left(\bar{M} , \bar{\pi} \right)$ and an ample lie bundle $\mathcal{L}_0$. Let $\beta_s \in H^{1,1}\left(\bar{M} , \R \right)$ , $s \in (0,1)$ be a family of K\"{ahler} classes such that
	\be
		\lim_{s \to 0} \beta_s = \bar{\pi}^* c_1 \left( \mathcal{L}_0\right).
	\ee
Let $\bar{g}_s$ be the unique Ricci-flat K\"{a}hler corresponding to the K\"{a}hler form $\bar{\omega}_s \in \beta_s$ then, as $s \to 0$,
\be
	\bar{g}_s \xrightarrow{C^\infty} \bar{\pi}^*g \;\;\; and \;\;\; \bar{\omega}_s \xrightarrow{C^\infty} \bar{\pi}^*\omega
\ee
on any compact subset $K \subset\subset \bar{M} \setminus \bar{\pi}^{-1}(S)$.
\end{prop}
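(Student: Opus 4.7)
The statement is a Monge--Amp\`ere problem for the family of Ricci-flat K\"ahler metrics $\bar{\omega}_s$ on the resolution $\bar M$, and my strategy is to reduce smooth convergence on compact sets $K \subset\subset \bar M \setminus \bar\pi^{-1}(S)$ to uniform two-sided bounds on $\bar\omega_s$ over $K$, then bootstrap via standard elliptic regularity. First I would fix a smooth family of reference K\"ahler forms $\alpha_s \in \beta_s$ converging smoothly on compact subsets of $\bar M \setminus \bar\pi^{-1}(S)$ to $\bar\pi^* \alpha_0$, where $\alpha_0$ is a smooth reference form in $c_1(\mathcal{L}_0)$ (which is only degenerate along $S$). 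Writing $\bar\omega_s = \alpha_s + \sqrt{-1}\,\partial\bar\partial \phi_s$ with the normalization $\sup_{\bar M} \phi_s = 0$, the Ricci-flat condition is equivalent to a complex Monge--Amp\`ere equation
\[
(\alpha_s + \sqrt{-1}\,\partial\bar\partial \phi_s)^n \;=\; c_s \, \Omega \wedge \overline{\Omega},
\]
where $\Omega$ is a holomorphic volume form on $\bar M$ and $c_s$ is a cohomological constant that stays bounded above and below as $s \to 0$ by the assumed convergence of classes.

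Next I would establish an $L^\infty$ bound and $C^0$ convergence of the potentials. A uniform $L^\infty$ bound on $\phi_s$ follows from Kolodziej's estimate for the complex Monge--Amp\`ere equation, since the right-hand side has uniformly bounded $L^p$ norm for some $p>1$. The generalized Calabi--Yau theorem of Eyssidieux--Guedj--Zeriahi produces a unique Ricci-flat K\"ahler current $\omega \in c_1(\mathcal{L}_0)$ whose potential $\phi_\infty$ (relative to $\alpha_0$) is bounded on $M_0$ and smooth on $M_0 \setminus S$. Uniqueness in pluripotential theory, applied to any $L^\infty$ subsequential limit of $\phi_s$, forces $\phi_s \to \bar\pi^*\phi_\infty$ in $L^\infty(\bar M)$ and hence locally uniformly on compact $K \subset\subset \bar M \setminus \bar\pi^{-1}(S)$.

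The core of the argument is a \emph{local} second-order estimate for $\phi_s$ on $K$ that is uniform in $s$. I would run Yau's maximum principle for $\tr_{\alpha_s} \bar\omega_s$, using a cut-off supported in a slightly larger neighborhood $K'$ with $K \Subset K' \Subset \bar M \setminus \bar\pi^{-1}(S)$; on $K'$ the bisectional curvature of $\alpha_s$ is uniformly bounded (since $\alpha_s$ converges smoothly there) and the $C^0$ bound on $\phi_s$ lets the standard Aubin--Yau inequality close up, yielding a constant $C$ (depending on $K,K'$ but not on $s$) with $\bar\omega_s \le C \alpha_s$ on $K$. The Monge--Amp\`ere equation then gives the matching lower bound $\bar\omega_s \ge C^{-1} \alpha_s$ on $K$, because the right-hand side $c_s\,\Omega \wedge \overline{\Omega}$ is smooth and uniformly bounded above and below on $K'$. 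With two-sided bounds the Monge--Amp\`ere equation is uniformly elliptic on $K$: Evans--Krylov supplies a uniform $C^{2,\alpha}(K)$ bound, and Schauder bootstrap, together with the smooth convergence of $\alpha_s$ and of the right-hand side on $K'$, promotes this to uniform $C^{k,\alpha}(K)$ bounds for every $k$. Arzel\`a--Ascoli then yields smooth subsequential limits, and the already-established $L^\infty$ convergence $\phi_s \to \bar\pi^* \phi_\infty$ pins down the limit, giving $\bar\omega_s \to \bar\pi^*\omega$ and $\bar g_s \to \bar\pi^*g$ in $C^\infty(K)$.

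\textbf{Main obstacle.} The principal difficulty is the local $C^2$ estimate: the classes $\beta_s$ approach the semi-positive, pulled-back class $\bar\pi^* c_1(\mathcal{L}_0)$, so any globally chosen reference metrics $\alpha_s$ degenerate along the exceptional set $E = \bar\pi^{-1}(S)$, and a naive global Yau-type computation feels this degeneration through unbounded curvature terms. The point is to localize: by confining the maximum principle to $K' \Subset \bar M \setminus E$ and exploiting the smooth convergence of $\alpha_s$ there, the geometric quantities entering Yau's estimate (bisectional curvature of $\alpha_s$, bounds on $\tr_{\alpha_s}\alpha_\infty$, etc.) remain uniformly controlled. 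Engineering the reference forms $\alpha_s$ and the cutoff so that this localization is clean --- and so that the $C^0$ bound on $\phi_s$ transfers into the Laplacian estimate without picking up $s$-dependence --- is the delicate technical point on which the whole scheme rests.
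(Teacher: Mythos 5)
The paper itself offers no proof of this proposition: it is quoted verbatim as background, with the proof deferred to Tosatti \cite{Tosatti-KE}. Measured against that actual proof, your overall architecture (reference forms $\alpha_s\in\beta_s$, reduction to a family of complex Monge--Amp\`ere equations, uniform $L^\infty$ bounds on the potentials, identification of the limit through the Eyssidieux--Guedj--Zeriahi solution, Evans--Krylov plus Schauder bootstrap on compact sets away from $\bar\pi^{-1}(S)$) is the right one and matches the literature.

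The genuine gap is in the step you yourself flag as the core: the ``localized Yau estimate'' with a cutoff on $K'\Subset \bar M\setminus\bar\pi^{-1}(S)$. A purely interior second-order estimate for the complex Monge--Amp\`ere equation, given only an $L^\infty$ bound on the potential and a smooth positive right-hand side, is not available -- interior regularity for complex Monge--Amp\`ere is known to fail in general, unlike the real (Pogorelov) case -- and the cutoff computation shows concretely why: at a maximum of $\eta\,(\text{or }\eta+{})\log\tr_{\alpha_s}\bar\omega_s - A\phi_s$ the terms $|\nabla\eta|^2_{\bar\omega_s}/\eta$ and $\Delta_{\bar\omega_s}\eta$ require an upper bound on $\tr_{\bar\omega_s}\alpha_s$, i.e.\ a lower bound on $\bar\omega_s$, which is exactly what you do not yet have; the Monge--Amp\`ere equation only converts an \emph{upper} bound on $\tr_{\alpha_s}\bar\omega_s$ into that lower bound, so the argument is circular as stated. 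Tosatti's proof avoids localization altogether: one applies the maximum principle \emph{globally} on $\bar M$ to a quantity of the form $\log\tr_{\hat\omega}\bar\omega_s - A\phi_s + B\log\|\sigma\|_h^2$, where $\hat\omega$ is a fixed K\"ahler metric and $\sigma$ is a holomorphic section cutting out the exceptional set, so the barrier term forces the maximum into the region where $\alpha_s$ is uniformly controlled; this yields a degenerate but uniform estimate $\tr_{\hat\omega}\bar\omega_s\le C\|\sigma\|_h^{-2B}$, which is a genuine two-sided bound on every $K\Subset\bar M\setminus\bar\pi^{-1}(S)$, and only then do Evans--Krylov and the bootstrap run as you describe. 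A secondary, smaller point: the uniform $L^\infty$ bound needs a version of Kolodziej's estimate that is uniform as the classes $\beta_s$ degenerate to the semi-positive class $\bar\pi^*c_1(\mathcal{L}_0)$ (as in Eyssidieux--Guedj--Zeriahi/Demailly--Pali), not merely the fixed-class statement; your sketch implicitly assumes this but should say so.
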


On the other hand Ruan-Zhang\cite{Rong-Zhang} proved convergence (off the singular set)  and diameter bound rasults for smoothings of a Calabi-Yau conifold. Although their result is sufficient for our purpose, we will mention a generalization of convergence results in \cite{Ruan-Zhang} due to Rong-Zhang \cite{Rong-Zhang} for the sake of being thorough.

\begin{prop}[Rong-Zhang\cite{Rong-Zhang}]\label{prop-smoothing-convergence}
	Let $M_0$ ba a Calabi-Yau variety with $\dim_\C(M_0) = n$ ($n \leq 2$) with singular set $S$. Assume that $M_0$ admits a smoothing $\pi: \mathcal{M} \to D$ and an ample line bundle $\mathcal{L}$ whose relative canonical bundle $\mathcal{K}_{\mathcal{M}/D} \cong \mathcal{O}_{\mathcal{M}}$. Let $g_t$ be the unique Ricci-flat K\"{a}hler metric with K\"{a}hler form $\omega_t \in c_1 (\mathcal{L})|_{M_t} \in H^{1,1}\left(M_t  , \R \right) \;\;(t \neq 0)$ , and $\omega$ be the unique singular Ricci-flat K\"{a}hler form on $M_0$ with $\omega \in c_1(\mathcal{L})|_{M_0} \in \left( M_0 , \mathcal{PH}_{M_0} \right)$, then
	\be
		F_t^*g_t \xrightarrow{C^\infty} g \;\;\; and \;\;\; F_t^*\omega_t \xrightarrow{C^\infty} \omega
	\ee
on any compact subset $K \subset \subset M_0 \setminus S$, where $F_t: M_0 \setminus S \to M_t$ is a smooth family of embeddings and $g$ is the corresponding K\"{a}hler metric of $\omega$ on $M_0 \setminus S$. Furtheremore, the diameter of $\left( M_t , g_t \right) \;\; (t \neq 0)$ enjoys the uniform upper bound
\be
	\diam_{g_t} \left( M_t \right) \le D
\ee 
where $D$ is a constant independent of $t$.
\end{prop}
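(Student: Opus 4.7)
The strategy is to recast the Ricci-flat condition on each $M_t$ as a degenerating family of complex Monge-Ampère equations that can be pulled back to a fixed open manifold $M_0\setminus S$, derive local a priori estimates for smooth convergence, and then handle the diameter bound by combining a non-collapsing argument with analysis of the Calabi-Yau model near each ordinary double point.

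First I would set up the Monge-Ampère picture. Using the ampleness of $\mathcal{L}$ on the total space $\mathcal{M}$, fix a Kähler form $\eta$ on $\mathcal{M}$ with $[\eta|_{M_t}] = c_1(\mathcal{L})|_{M_t}$ and set $\hat\omega_t = \eta|_{M_t}$. Writing $\omega_t = \hat\omega_t + \sqrt{-1}\partial\bar\partial\varphi_t$ with $\sup\varphi_t = 0$, the Ricci-flat condition combined with $\mathcal{K}_{\mathcal{M}/D}\cong\mathcal{O}_{\mathcal{M}}$ yields
\[
(\hat\omega_t + \sqrt{-1}\partial\bar\partial\varphi_t)^n \;=\; c_t\,\Omega_t\wedge\bar\Omega_t,
\]
where $\Omega_t$ is a smoothly varying family of nowhere-vanishing holomorphic $n$-forms with a controlled singular limit on $M_0$. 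Pulling this equation back to $M_0\setminus S$ via the embeddings $F_t$, the reference forms $F_t^*\hat\omega_t$ and the right-hand sides $F_t^*(\Omega_t\wedge\bar\Omega_t)$ converge smoothly on any compact $K\subset\subset M_0\setminus S$.

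Next I would establish the smooth convergence. A uniform $L^\infty$ bound $\|\varphi_t\|_\infty\le C$ follows from Kolodziej's estimate, which requires only the fixed total volume $\int_{M_t}c_1(\mathcal{L})^n$ and a uniform $L^p$ bound on the Monge-Ampère density. With this $L^\infty$ control in hand, the local a priori theory for complex Monge-Ampère — Yau's $C^2$ estimate (using the smooth, hence uniform, bisectional curvature bound of $F_t^*\hat\omega_t$ on a fixed neighborhood of $K$), Evans-Krylov for $C^{2,\alpha}$, and Schauder bootstrapping — gives uniform $C^{k,\alpha}$ bounds on $F_t^*\varphi_t|_K$ for every $k$. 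Subsequential limits are Ricci-flat Kähler forms on $M_0\setminus S$ in the limiting class; the Eyssidieux-Guedj-Zeriahi uniqueness result for the singular Monge-Ampère class identifies the limit with $\omega$, upgrading subsequential to full smooth convergence.

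The diameter bound is the principal obstacle and cannot follow from any purely local estimate. My plan is to first derive a uniform non-collapsing inequality $\vol_{g_t}(B_r(x))\ge \kappa r^{2n}$ for $x\in M_t$ and $r$ below a definite scale, using Kolodziej stability together with the Cheeger-Colding theory of non-collapsed Ricci-flat limits; combined with the fixed total volume and smooth convergence on compact subsets of $M_0\setminus S$, this alone yields Gromov-Hausdorff precompactness and hence a subsequential diameter ceiling. Promoting this to a genuine uniform upper bound $\diam_{g_t}(M_t)\le D$ as $t\to 0$ requires comparing $g_t$ on a shrinking neighborhood of each node with the explicit Candelas-de la Ossa/Stenzel Ricci-flat model on the smoothed ordinary double point — whose necks have bounded diameter at bounded width — and controlling the transition region between the model and the bulk. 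The hardest technical step, and where Rong-Zhang invoke a Nash-Moser iteration, is transferring the weak convergence of potentials near $S$ into the pointwise Hessian control needed to effect this comparison; that iteration is the genuine obstacle, while everything else in the proposal is a packaging of standard Monge-Ampère and comparison-geometry tools.
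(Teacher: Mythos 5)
This proposition is not proved in the paper: it is quoted as a background result of Rong--Zhang \cite{Rong-Zhang} (building on Ruan--Zhang \cite{Ruan-Zhang}) and used as an input to Theorem \ref{thm-Candelas}, so there is no in-paper argument to compare yours with; the comparison below is with the cited proof your sketch would have to reproduce.

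The first half of your proposal --- trivializing $\mathcal{K}_{\mathcal{M}/D}$ to write the Ricci-flat condition as $(\hat\omega_t+\sqrt{-1}\partial\bar\partial\varphi_t)^n=c_t\,\Omega_t\wedge\bar\Omega_t$, Kolodziej's uniform $L^\infty$ estimate from a uniform $L^p$ ($p>1$) bound on the densities, local Yau $C^2$/Evans--Krylov/Schauder estimates on compact sets away from $S$, and identification of the limit via the uniqueness of \cite{EGZ} --- is indeed the skeleton of the argument in \cite{Ruan-Zhang} and \cite{Rong-Zhang}. What you leave implicit but must be checked is the uniform $L^p$ integrability of the densities for the family of smoothings of ordinary double points (this is where the conifold hypothesis enters) and, since the reference data vary with $t$, uniform control of $F_t^*\hat\omega_t$ on a neighborhood of $K$; the Moser/Nash--Moser-type iteration of \cite{Rong-Zhang} is invoked precisely to obtain the uniform higher-order bounds on the potentials in this step, not in the diameter argument where you place it.

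The genuine gap is the diameter bound, which is the part of the proposition this paper actually needs. First, the uniform non-collapsing $\vol_{g_t}(B_r(x))\ge\kappa r^{2n}$ at a definite scale, valid at every $x$ including points on the shrinking vanishing cycles, is exactly the hard statement as $t\to0$, and you offer no argument for it; Kolodziej stability controls potentials, not metric balls near the node. Second, Cheeger--Colding non-collapsed theory presupposes such an estimate rather than supplying it, and in any case a ``subsequential diameter ceiling'' is not the asserted bound: to conclude $\diam_{g_t}(M_t)\le D$ you must exclude diameter blow-up along some sequence $t_i\to0$, which your sketch does not attempt. Third, the step you yourself call decisive --- matching $g_t$ to the Candelas--de la Ossa/Stenzel model across a transition region via pointwise Hessian control --- is left unproven by your own admission, so the proposal does not close. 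Moreover this heavy route is unnecessary: in \cite{Ruan-Zhang} the diameter bound is soft, because the Ricci-flat volume form $c_t\,\Omega_t\wedge\bar\Omega_t$ is prescribed independently of the unknown potential. Hence $\vol_{g_t}(M_t\setminus F_t(K))$ is computed from the fixed holomorphic data and is uniformly small once $K$ is large, while the volume and intrinsic size of $F_t(K)$ are controlled by the smooth convergence already established. Ricci-flatness then enters only through Bishop--Gromov: if $B(x_0,R_0)$ with $x_0\in F_t(K)$ carries all but an $\epsilon$-fraction of the total volume $V$, then for any $y$ with $d(y,x_0)=d>R_0$ one has $\epsilon V\ge\vol_{g_t}(B(y,d-R_0))\ge(1-\epsilon)V\big((d-R_0)/(d+R_0)\big)^{2n}$, which bounds $d$ and hence $\diam_{g_t}(M_t)$. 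Reorganize the diameter part around this volume-concentration argument (or genuinely carry out the model-gluing estimates); as written, that half of the statement is unsubstantiated.
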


%%%%%%%%%%%%%%%%%%%%%%%%%%%%%%%%%%%%%%%%%

%%%%%%%%%%%%%%%%%%%%%%%%%%%%%%%%%%%%%%%%%

\subsection{Good Exhaustion of the Regular Set}

In this section, we will consider the case of finitely many ordinary double point singularities $S = \left\{ p_1 , p_2 , \dots , p_m \right\} \subset M$ and will give a very natural exhaustion, $\left\{ K_i \right\}$, of the regular set $M \setminus S$ that satisfy all the hypotheses in Theorem \ref{Ricci-thm-improved}

Recall from Definition \ref{defn-conifold} that a double point singularity is locally modelled as $p = ( 0 , 0 , \dots , 0)$ in $\left\{z \in \C^{n+1} \; | \; z_0^2 + z_1^2 + \dots + z_n^2 = 0 \right\}$.

We construct the exhaustion by just taking out metric balls around singular points , i.e. let $r_i$ be a sequence of positive numbers such that $\lim_{i \to \infty} r_i = 0$ and let
\be
	W_i = M_0 \setminus \bigcup_j B\left( p_j , r_i\right)
\ee

At this point we recall the Bishop-Gromov relative volume comparison theorems which will show that we have chsen a good exhaustion. 

\begin{thm}[Bishop-Gromov Relative Volume Comparison]\label{thm-Bishop-Gromov}
	Suppose $M^n$ has $\Ricci_M \ge (n-1)H$ then,

\be
	\frac{\vol \Big( \partial B(p,r) \Big)}{\vol_H \Big( \partial B(r) \Big)} \;\; and \;\; \frac{\vol \Big( B(p,r) \Big)}{\vol_H \Big( B(r) \Big)}
\ee

are nonincreasing in $r$.
 
\end{thm}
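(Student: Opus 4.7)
The plan is to pass to geodesic polar coordinates around $p$ and reduce the statement to a one-dimensional ODE comparison. I would fix a unit vector $\theta \in T_pM$, let $\gamma_\theta(r) = \exp_p(r\theta)$, and denote by $c(\theta) \in (0, \infty]$ the distance to the cut locus along $\gamma_\theta$. Inside the star-shaped injectivity domain the Riemannian volume pulls back to $J(r,\theta)\,dr\,d\sigma(\theta)$, where $J(r,\theta) = \det(\mathrm{d}\exp_p|_{r\theta})\cdot r^{n-1}$ is built from the Jacobi fields along $\gamma_\theta$. The model space of constant sectional curvature $H$ gives the purely radial analog $J_H(r) = s_H(r)^{n-1}$, where $s_H$ solves $s_H'' + H s_H = 0$ with $s_H(0)=0$, $s_H'(0)=1$.

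The central pointwise claim is that $J(r,\theta)/J_H(r)$ is non-increasing on $(0, c(\theta))$. To prove it I would set $m(r,\theta) = \partial_r \log J(r,\theta)$, which coincides with the mean curvature of the geodesic sphere $\partial B(p,r)$ at $\gamma_\theta(r)$. The Riccati equation for the shape operator of geodesic spheres combined with the Cauchy--Schwarz inequality $\operatorname{tr}(A^2) \ge (\operatorname{tr} A)^2/(n-1)$ yields
\begin{equation}
\partial_r m + \frac{m^2}{n-1} + \Ric(\partial_r,\partial_r) \le 0,
\end{equation}
while the model satisfies $\partial_r m_H + m_H^2/(n-1) + (n-1)H = 0$. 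Using $\Ric \ge (n-1)H$ and the common asymptotic behavior $m(r,\theta), m_H(r) \sim (n-1)/r$ as $r \to 0^+$, a standard Sturm comparison gives $m(r,\theta) \le m_H(r)$ for all $r \in (0, c(\theta))$, which is precisely $\partial_r \log(J/J_H) \le 0$.

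Next I would lift this pointwise monotonicity to the volume ratios. Writing $\vol(\partial B(p,r)) = \int_{S^{n-1}_p} J(r,\theta)\,d\sigma(\theta)$ (with $J(r,\theta) := 0$ for $r \ge c(\theta)$) and $\vol_H(\partial B(r)) = \omega_{n-1} J_H(r)$, the fact that each integrand divided by $J_H(r)$ is non-increasing gives monotonicity of the sphere ratio. For balls I would invoke the elementary lemma: if $f,g : (0,\infty) \to [0,\infty)$ with $g > 0$ and $f/g$ non-increasing, then $(\int_0^r f)/(\int_0^r g)$ is non-increasing. Applying this slicewise and using Fubini yields the monotonicity of $\vol(B(p,r))/\vol_H(B(r))$.

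The main obstacle is the careful treatment of the cut locus. The pointwise Riccati argument is valid only on $(0, c(\theta))$, and past $c(\theta)$ the Jacobi Jacobian $J$ need not even be defined as a smooth quantity, yet one must still integrate honestly. The resolution is to use that $\exp_p$ restricted to the star-shaped injectivity domain is a diffeomorphism onto $M \setminus \operatorname{Cut}(p)$ and that $\operatorname{Cut}(p)$ has measure zero, so all volume integrals can be computed inside the injectivity domain; extending $J(\cdot,\theta)$ by $0$ at $r = c(\theta)$ preserves the inequality $\partial_r \log(J/J_H) \le 0$ in the almost-everywhere sense because $J$ only jumps downward at the cut locus. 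With this measure-theoretic cleanup in hand, the comparison and the two integration lemmas combine to give the desired monotonicity of both ratios.
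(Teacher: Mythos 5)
Your write-up is the standard textbook proof of Bishop--Gromov (polar-coordinate Jacobian, Riccati plus Cauchy--Schwarz giving the mean-curvature comparison, Sturm comparison from the common asymptotics at $r=0$, extension of $J$ by zero past the cut locus, and the elementary lemma that $f/g$ non-increasing implies $\int_0^r f\big/\int_0^r g$ non-increasing), and it is correct. Note that the paper itself gives no proof of this statement: it is recalled as classical background, used only to verify the volume bounds on $\partial W_j$ and $M\setminus W_j$ in the conifold application, so there is nothing to compare against beyond observing that your argument is the standard one. The only point worth one extra line is that for the sphere ratio one should read $\vol\big(\partial B(p,r)\big)$ as $\int_{S^{n-1}}J(r,\theta)\,d\sigma(\theta)$, i.e.\ the area of the part of the geodesic sphere outside the cut locus, which is exactly what your convention $J(r,\theta)=0$ for $r\ge c(\theta)$ encodes.
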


We also need the following well-known expansion of volume ratio for Riemannian maniolds:

\begin{prop}\label{prop-volume-ratio}

For any Riemannian manifold $M$ , we have:

\be\label{}
	\frac{\vol \Big(  B(\epsilon , p) \subset M \Big)}{\vol \Big( B(\epsilon) \subset \R^n  \Big)} = 1 - \frac{Scal}{6(n+2)}\epsilon^2  + O(\epsilon^4),
\ee
and,

\be
	\frac{\vol \Big(  \partial B(\epsilon , p) \subset M \Big)}{\vol \Big( \partial B(\epsilon) \subset \R^n  \Big)} = 1 - \frac{Scal}{6n}\epsilon^2  + O(\epsilon^4)
\ee
\end{prop}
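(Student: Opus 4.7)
The approach is standard: work in geodesic normal coordinates at $p$, Taylor-expand the volume density, and integrate. The key ingredient is the classical expansion of the metric tensor in normal coordinates,
\begin{equation*}
g_{ij}(x) = \delta_{ij} - \tfrac{1}{3} R_{ikjl}(p)\, x^k x^l + O(|x|^3),
\end{equation*}
which I would quote from a standard reference (e.g. Lee's Riemannian Manifolds). Taking determinants with $\det(I+A) = 1 + \tr A + O(\|A\|^2)$ and then a square root yields the volume density
\begin{equation*}
\sqrt{\det g(x)} = 1 - \tfrac{1}{6} R_{kl}(p)\, x^k x^l + O(|x|^3)
\end{equation*}
in normal coordinates.

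For the first expansion, I would pull back the integral $\vol_M(B(p,\epsilon))$ via the exponential map to the Euclidean ball $B_\epsilon \subset \R^n$. Terms that are odd in $x$ integrate to zero by the reflection symmetry $x\mapsto -x$, so the cubic error contributes only $O(\epsilon^{n+4})$. The quadratic term is evaluated via the symmetric-moment identity
\begin{equation*}
\int_{B_\epsilon} x^k x^l \, dx = \delta_{kl} \cdot \frac{\vol(B_\epsilon)\,\epsilon^2}{n+2}.
\end{equation*}
Contracting $\delta^{kl} R_{kl}(p) = \operatorname{Scal}(p)$ and dividing through by $\vol(B_\epsilon)$ produces the stated expansion for the volume ratio.

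For the second expansion, I would exploit the coarea-type identity $\frac{d}{d\epsilon}\vol_M(B(p,\epsilon)) = \vol_M(\partial B(p,\epsilon))$ and differentiate the first expansion directly. Writing $\vol(B_\epsilon) = \frac{\omega_{n-1}}{n}\epsilon^n$ and $\vol(\partial B_\epsilon) = \omega_{n-1}\epsilon^{n-1}$, the derivative produces two contributions, one from differentiating $\epsilon^n$ in the leading term and another from differentiating $\epsilon^{n+2}$ in the correction; combining them gives the coefficient
\begin{equation*}
\frac{1}{6(n+2)} + \frac{2}{6(n+2)}\cdot\frac{1}{n}\cdot\frac{n+2}{2} = \frac{1}{6n},
\end{equation*}
which is exactly the claimed coefficient $\tfrac{1}{6n}$.

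There is no serious obstacle; the only point that requires a moment's thought is that the $O(|x|^3)$ term in the metric contributes $O(\epsilon^{n+4})$ rather than $O(\epsilon^{n+3})$ after integration. This is because the degree-three term in the normal-coordinate expansion of $g_{ij}$ is odd in $x$ (it involves one covariant derivative of the curvature contracted with $x^kx^lx^m$), so its contribution to $\sqrt{\det g}$ integrates to zero by parity, and the next nonvanishing correction is genuinely quartic. Uniformity of all remainders in the angular variable follows from smoothness of $g$ and compactness of the unit sphere.
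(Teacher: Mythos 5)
The paper itself offers no proof of this proposition: it is quoted in Section \ref{sec-applications} as a ``well-known expansion'' (the classical Bertrand--Diquet--Puiseux/Gray formulas), so there is nothing internal to compare against. Your argument is the standard proof and is essentially correct: normal coordinates, $\sqrt{\det g}(x)=1-\tfrac16 R_{kl}(p)x^kx^l+O(|x|^3)$, the Gauss lemma identifying $B(p,\epsilon)$ with the coordinate ball, the moment identity, and parity to push the error to $O(\epsilon^4)$ all check out, and they give the first expansion with the coefficient $\tfrac{1}{6(n+2)}$. Two points in the second half need repair, though neither is fatal. First, your displayed coefficient identity is false as written: $\tfrac{1}{6(n+2)}+\tfrac{2}{6(n+2)}\cdot\tfrac1n\cdot\tfrac{n+2}{2}=\tfrac{1}{6(n+2)}+\tfrac{1}{6n}\neq\tfrac{1}{6n}$. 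The correct bookkeeping is a single factor, not a sum of two contributions: differentiating $\vol_M(B(p,\epsilon))=\tfrac{\omega_{n-1}}{n}\epsilon^{n}-\tfrac{\mathrm{Scal}}{6(n+2)}\tfrac{\omega_{n-1}}{n}\epsilon^{n+2}+O(\epsilon^{n+4})$ and dividing by $\omega_{n-1}\epsilon^{n-1}$ gives the sphere coefficient $\tfrac{1}{6(n+2)}\cdot\tfrac{n+2}{n}=\tfrac{1}{6n}$. Second, you differentiate an asymptotic expansion term by term, which is not automatic for an arbitrary $O(\epsilon^{n+4})$ remainder; the clean fix is either to expand $\vol_M(\partial B(p,\epsilon))=\int_{S^{n-1}}\sqrt{\det g(\epsilon\theta)}\,\epsilon^{n-1}\,d\theta$ directly, using $\int_{S^{n-1}}\theta^k\theta^l\,d\theta=\tfrac{\omega_{n-1}}{n}\delta_{kl}$ and the same parity argument on the sphere to kill the cubic term, or to note that the remainder in your volume formula is exactly the integral of a pointwise $O(|x|^4)$-plus-odd density, whose $\epsilon$-derivative is the corresponding surface integral and hence $O(\epsilon^{n+3})$. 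With those two corrections your proof is complete.
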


Appying Theorem \ref{thm-Bishop-Gromov} and Proposition \ref{prop-volume-ratio} to our Ricci flat metrics prove that the exhaustion we have chosen and their boundaries satisfy the volume and diameter conditions in Theorems \ref{Ricci-thm-improved} and \ref{diam-thm} namely,

\be \label{area-2}
\vol_{g_i}(\partial W_j) \le A_0,
\ee
and
\be \label{not-vol-2}
\vol_{g_i}(M\setminus W_j) \le V_j \textrm{ where } \lim_{j\to\infty}V_j=0,
\ee

\begin{rmrk}
We are allowed to use the comparison theorems because we are in the setting of K\"{a}hler manifolds.
\end{rmrk}

\subsection{Proof of Theorem \ref{thm-Candelas}}

Propositions \ref{prop-resolution-convergence} and \ref{prop-smoothing-convergence} provide the diameter bounds and convergence off the singular set assumptions in Theorems \ref{Ricci-thm-improved} and \ref{diam-thm} and by using Theorem \ref{thm-Bishop-Gromov} and Proposition \ref{prop-volume-ratio} we get the uniform volume bounds for the exhaustion $W_j$ and their boundaries as are required in Theorems \ref{Ricci-thm-improved} and \ref{diam-thm}.

%\addcontentsline{toc}{chapter}{Bibliography}

\bibliographystyle{plain}
\bibliography{reference2013}

\begin{thebibliography}{10}

\bibitem{AK}
Luigi Ambrosio and Bernd Kirchheim.
\newblock Currents in metric spaces.
\newblock {\em Acta Math.}, 185(1):1--80, 2000.

\bibitem{Anderson-KE}
Michael~T. Anderson.
\newblock Ricci curvature bounds and {E}instein metrics on compact manifolds.
\newblock {\em J. Amer. Math. Soc.}, 2(3):455--490, 1989.

\bibitem{Bando-bubbling}
Shigetoshi Bando.
\newblock Bubbling out of {E}instein manifolds.
\newblock {\em Tohoku Math. J. (2)}, 42(2):205--216, 1990.

\bibitem{BKN}
Shigetoshi Bando, Atsushi Kasue, and Hiraku Nakajima.
\newblock On a construction of coordinates at infinity on manifolds with fast
  curvature decay and maximal volume growth.
\newblock {\em Invent. Math.}, 97(2):313--349, 1989.

\bibitem{BBI}
Dmitri Burago, Yuri Burago, and Sergei Ivanov.
\newblock {\em A course in metric geometry}, volume~33 of {\em Graduate Studies
  in Mathematics}.
\newblock American Mathematical Society, Providence, RI, 2001.

\bibitem{ChCo-PartII}
Jeff Cheeger and Tobias~H. Colding.
\newblock On the structure of spaces with {R}icci curvature bounded below.
  {II}.
\newblock {\em J. Differential Geom.}, 54(1):13--35, 2000.

\bibitem{EGZ}
Philippe Eyssidieux, Vincent Guedj, and Ahmed Zeriahi.
\newblock Singular {K}\"ahler-{E}instein metrics.
\newblock {\em J. Amer. Math. Soc.}, 22(3):607--639, 2009.

\bibitem{Folland}
Gerald~B. Folland.
\newblock {\em Real Analysis, Modern Techniques and Their Applications}.
\newblock Pure and Applied Mathematics. Whiley-Interscience, New York, 1999.

\bibitem{Greene-Petersen}
Robert~E. Greene and Peter Petersen~V.
\newblock Little topology, big volume.
\newblock {\em Duke Math. J.}, 67(2):273--290, 1992.

\bibitem{Gromov-metric}
Misha Gromov.
\newblock {\em Metric structures for {R}iemannian and non-{R}iemannian spaces},
  volume 152 of {\em Progress in Mathematics}.
\newblock Birkh\"auser Boston Inc., Boston, MA, 1999.
\newblock Based on the 1981 French original [ MR0682063 (85e:53051)], With
  appendices by M. Katz, P. Pansu and S. Semmes, Translated from the French by
  Sean Michael Bates.

\bibitem{Huang-convergence}
Hong Huang.
\newblock Convergence of {E}instein 4-orbifolds.
\newblock {\em Acta Math. Sinica (Chin. Ser.)}, 52(1):205--208, 2009.

\bibitem{RF_SWIF}
Sajjad Lakzian.
\newblock Continuity of ricci flow through neck pinch singularities.
\newblock {\em arXiv:1210.6872}.

\bibitem{Lakzian-Sormani}
Sajjad Lakzian and Christina Sormani.
\newblock Smooth convergence away from singular sets.
\newblock {\em To Appear in Comm. Anal. Geom.}

\bibitem{LeeSormani1}
Dan~A. Lee and Christina Sormani.
\newblock {S}tability of the positive mass theorem for rotationally symmetric
  riemannian manifolds.
\newblock {\em preprint on arxiv}, 2011.

\bibitem{Lin-Yang-GMT-text}
Fanghua Lin and Xiaoping Yang.
\newblock {\em Geometric measure theory---an introduction}, volume~1 of {\em
  Advanced Mathematics (Beijing/Boston)}.
\newblock Science Press, Beijing, 2002.

\bibitem{Perelman-max-vol}
G.~Perelman.
\newblock Manifolds of positive {R}icci curvature with almost maximal volume.
\newblock {\em J. Amer. Math. Soc.}, 7(2):299--305, 1994.

\bibitem{Rong-Zhang}
Xiaochun Rong and Yuguang Zhang.
\newblock Continuity of extremal transitions and flops for calabi-yau
  manifolds.
\newblock {\em J. Differential Geom.}, 89(2):233--269, 2011.

\bibitem{Ruan-Zhang}
Ruan and Zhang.
\newblock Convergence of calabi-yau manifolds.
\newblock {\em preprint on arxiv}.

\bibitem{Sesum-convergence}
Natasa Sesum.
\newblock Convergence of {K}\"ahler-{E}instein orbifolds.
\newblock {\em J. Geom. Anal.}, 14(1):171--184, 2004.

\bibitem{Song-Candelas-conj}
Jian Song.
\newblock On a conjecture of candelas and de la ossa.
\newblock {\em arXiv:1201.4358}.

\bibitem{SorWen1}
Christina Sormani and Stefan Wenger.
\newblock Weak convergence and cancellation, appendix by {R}aanan {S}chul and
  {S}tefan {W}enger.
\newblock {\em Calculus of Variations and Partial Differential Equations},
  38(1-2), 2010.

\bibitem{SorWen2}
Christina Sormani and Stefan Wenger.
\newblock Intrinsic flat convergence of manifolds and other integral current
  spaces.
\newblock {\em Journal of Differential Geometry}, 87, 2011.

\bibitem{Tian-surfaces}
G.~Tian.
\newblock On {C}alabi's conjecture for complex surfaces with positive first
  {C}hern class.
\newblock {\em Invent. Math.}, 101(1):101--172, 1990.

\bibitem{Tian-Viac}
Gang Tian and Jeff Viaclovsky.
\newblock Moduli spaces of critical riemannian metrics in dimension four.
\newblock {\em Advances in Mathematics}, 196(2):346 -- 372, 2005.

\bibitem{Tosatti-KE}
Valentino Tosatti.
\newblock Limits of {C}alabi-{Y}au metrics when the {K}\"ahler class
  degenerates.
\newblock {\em J. Eur. Math. Soc. (JEMS)}, 11(4):755--776, 2009.

\bibitem{MR0417452}
Shing~Tung Yau.
\newblock Some function-theoretic properties of complete {R}iemannian manifold
  and their applications to geometry.
\newblock {\em Indiana Univ. Math. J.}, 25(7):659--670, 1976.

\end{thebibliography}

%\bibliographystyle{AMSalpha}
%\bibliography{2011}{}

\end{document}